\documentclass{article}
\usepackage{amsmath}
\usepackage{quiver}
\usepackage{amsthm}
\usepackage{amssymb}
\usepackage{amsfonts}
\usepackage{geometry}
\usepackage{graphicx} 
\usepackage[
backend=biber,
style=alphabetic,
sorting=nyt
]{biblatex}
\usepackage{mathrsfs}
\usepackage{xcolor}
\usepackage{hyperref}

\newtheorem{thm}{Theorem}[section]
\newtheorem{prop}[thm]{Proposition}

\newtheorem{lemma}[thm]{Lemma}
\newtheorem{conj}[thm]{Conjecture}

\theoremstyle{definition}
\newtheorem{defn}[thm]{Definition}

\theoremstyle{remark}
\newtheorem*{rmk}{Remark}

\usepackage{thm-restate}

\DeclareMathOperator{\Spec}{Spec}

\DeclareMathOperator{\sll}{SL}

\DeclareMathOperator{\et}{et}
\DeclareMathOperator{\geom}{geom}
\DeclareMathOperator{\piet}{\pi_1^{\et}}
\DeclareMathOperator{\pietgeo}{\pi_1^{\et, \geom}}
\usepackage{graphicx} 

\title{Finiteness for \'{E}tale Fundamental Groups of N\'{e}ron Models}
\author{Frank Lu}
\begin{document}

\maketitle

\begin{abstract}
In this paper, we prove that the \'{e}tale fundamental group of the N\'{e}ron model of an abelian variety over a number field $K$ is the semidirect product of a finite group with the \'{e}tale fundamental group of the ring of integers of $K.$ We prove this by studying how the Faltings height of an abelian variety changes under covers that spread out to finite \'{e}tale covers of its N\'{e}ron model. We then strengthen this result for elliptic curves. Using Merel's torsion theorem, we show the size of this finite group can be uniformly bounded for a fixed number field. We conclude by giving the list of all possible \'{e}tale fundamental groups for the N\'{e}ron model of an elliptic curve over $\mathbb{Q}.$
\end{abstract}
\tableofcontents
\section{Introduction}
Let $K$ be a number field with ring of integers $\mathcal{O}_K,$ and let $\mathcal{A}/\mathcal{O}_K$ be the N\'{e}ron model of an abelian variety $A/K.$ Consider the kernel of the map $\piet(\mathcal{A}) \rightarrow \piet(\Spec \mathcal{O}_K),$ which we notate by $\pietgeo(\mathcal{A})$ (and discuss in more detail in Section \ref{sec: prelims}). Our goal is to study finiteness properties of this subgroup, with the first main theorem of our paper being the following. 
\begin{thm}\label{thm: basic finiteness}
Let $A$ be an abelian variety over $K$ of dimension $g,$ and let $\mathcal{A}$ be its N\'{e}ron model over $\Spec \mathcal{O}_K.$  Then, $\pietgeo(\mathcal{A})$ is a finite group.
\end{thm}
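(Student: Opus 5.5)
We would prove finiteness by classifying the connected finite étale covers of $\mathcal{A}$ that are geometrically connected, i.e.\ do not come from covers of $\Spec \mathcal{O}_K$, and then bounding their degrees. First we reduce to \emph{isogenies}. Let $\mathcal{B} \to \mathcal{A}$ be a connected finite étale cover, and pass to the generic fiber $B \to A$. After a finite extension $L/K$ unramified everywhere (which only changes the base $\mathcal{O}_K$ by an étale cover, and is harmless since we are computing the kernel to $\piet(\Spec\mathcal{O}_K)$), we may assume $B$ has a rational point over the identity. By Lang--Serre, $B$ is then an abelian variety and $B \to A$ is an isogeny $\phi$ with kernel a finite étale $K$-group scheme $G$.

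The key structural point is that the cover spreads out to a finite étale cover of the Néron model. By the Néron mapping property, $\phi$ extends to $\mathcal{B}^{\text{N\'er}} \to \mathcal{A}$, and the condition that $\mathcal{B}\to\mathcal{A}$ is finite étale forces the kernel of $\phi$ to extend to a finite étale group scheme over $\mathcal{O}_K$. Equivalently, $G$ is unramified at every finite place; in particular, at places of residue characteristic $p$ dividing $|G|$, the $p$-part of $G$ is étale over $\mathcal{O}_{K_v}$. Moreover, $\mathcal{B}$ is then identified with the Néron model of $B$ (or an open subgroup of it), and $\phi$ is étale on Néron models.

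Next we bound the degree. We compare Faltings heights: for an isogeny $\phi: B\to A$ of degree $d$, Raynaud's formula gives
\[
h(A) = h(B) + \tfrac12\log d - \tfrac{1}{[K:\mathbb{Q}]}\sum_v \log \#\bigl(s^*\Omega_{\ker \phi_{\mathcal O}}\bigr)_v .
\]
Since $\ker\phi$ extends to a finite \emph{étale} group scheme, its sheaf of differentials vanishes, so the correction term is zero and $h(B) = h(A) - \tfrac12 \log d$. Iterating along a tower of such covers, the heights decrease without bound as $d\to\infty$. But Faltings heights are bounded below by a constant depending only on $g$ (and $K$); this is Bost's lower bound, or alternatively the Northcott property combined with the finiteness of the isogeny class from Faltings. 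Hence $d$ is bounded by $\exp(2(h(A)-c_g))$. Therefore every geometrically connected finite étale cover has degree at most some fixed $D$. Since $\pietgeo(\mathcal{A})$ is profinite and its finite quotients correspond to such covers (after base change to unramified extensions), a profinite group all of whose finite continuous quotients have order at most $D$ is finite.

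The main obstacle is the first step. We must make rigorous that an arbitrary finite étale cover of $\mathcal{A}$ lying in $\pietgeo$ comes from an isogeny whose kernel extends étale over $\mathcal{O}_K$. This has two parts: handling the descent to geometric connectedness via unramified extensions of $K$, and showing étaleness of the kernel from étaleness of the cover, even at primes of bad reduction where the component group of the Néron model interferes. To handle this, we would work with the identity component $\mathcal{A}^0$ first and then treat the finite component groups separately. We would also keep careful track of the fact that $\pietgeo$ is defined as a kernel, rather than via a geometric fiber.
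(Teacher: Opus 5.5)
Your core mechanism is the paper's. A connected finite \'{e}tale cover $\mathcal{B}\to\mathcal{A}$ with a section over the identity is an isogeny $B\to A$ that is \'{e}tale on N\'{e}ron models. The correction term in the Faltings--Raynaud formula then vanishes, so $h(B)=h(A)-\frac12\log d$, and a lower bound on heights caps $d$.

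\textbf{Producing the section.} The paper uses Lemma~\ref{lem: generic surjects onto integral for geometric} to see that $\pietgeo(\mathcal{A})$ is a quotient of $\hat{\mathbb{Z}}^{2g}$. It then only considers the subgroups $N\pietgeo(\mathcal{A})\rtimes s_*\piet(\Spec\mathcal{O}_K)$. These contain the image of the identity section, so the corresponding covers already have $\mathcal{O}_K$-points and everything stays over $K$. You instead take an arbitrary open normal $U\subset\pietgeo(\mathcal{A})$ and pass to unramified extensions. This works:
\begin{itemize}
\item an open $V\subset\piet(\mathcal{A})$ with $V\cap\pietgeo(\mathcal{A})=U$ gives a connected cover of $\mathcal{A}_{\mathcal{O}_L}$ of degree $[\pietgeo(\mathcal{A}):U]$, with $L$ cut out by the image of $V$;
\item this cover stays connected after further unramified base change;
\item $h$ is unchanged, since N\'{e}ron models commute with \'{e}tale base change.
\end{itemize}
It also spares you Lemma~\ref{lem: generic surjects onto integral for geometric}. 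But $L$ now varies, possibly up an infinite class field tower, so your lower bound must be uniform in $L$. Bost's bound on the stable height, together with $h_L\geq h_{\mathrm{st}}$, is uniform. Your alternative ``Northcott plus finiteness of the isogeny class'' is not, since both are statements over one fixed number field. The paper can use Northcott only because it never leaves $K$, apart from the single field where $A$ becomes semistable.

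\textbf{The gap.} The step you flag as the main obstacle is left unproved, and the route through $\mathcal{A}^0$ and component groups is unnecessary. Proposition~\ref{prop: finite etale covers are neron models} closes it in a few lines:
\begin{itemize}
\item the N\'{e}ron property gives $g\colon\mathcal{B}\to\mathcal{N}$ over $\mathcal{A}$, where $\mathcal{N}$ is the N\'{e}ron model of $B$;
\item $g$ is finite, because $\mathcal{B}\to\mathcal{A}$ is finite and $\mathcal{N}\to\mathcal{A}$ is separated;
\item $g$ is birational, and both schemes are normal and integral, so $g$ is an isomorphism (not merely onto an open subgroup).
\end{itemize}
The kernel of $\mathcal{B}\to\mathcal{A}$ is then a pullback of a finite \'{e}tale map along the identity section, hence finite \'{e}tale, with no analysis at bad primes.

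This identification is essential. Your justification ``since $\ker\phi$ extends to a finite \'{e}tale group scheme, \ldots\ the correction term is zero'' fails as written. The correction term is computed from the kernel of the map of N\'{e}ron models, and $G$ having \emph{some} finite \'{e}tale model (equivalently, being unramified) does not make that kernel \'{e}tale. For example, if $E/K$ has $E[\ell]\subset E(K)$, then $G=E[\ell]$ is constant, hence unramified. Yet $[\ell]$ is not \'{e}tale on the N\'{e}ron model (Lemma~\ref{lem: multiplication by N not etale}), and $h$ does not drop. What you need is \'{e}taleness of the N\'{e}ron-model kernel, which is exactly what $\mathcal{B}\cong\mathcal{N}$ provides.
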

\par Our main strategy is to study the behavior of the Faltings height under finite \'{e}tale covers of the N\'{e}ron model $\mathcal{A}.$ Namely, suppose that $\phi: A' \rightarrow A$ is an isogeny of abelian varieties over $K,$ such that the corresponding morphism of N\'{e}ron models $\mathcal{A}' \rightarrow \mathcal{A}$ is \'{e}tale. Then, using Faltings's isogeny formula, \cite[Lemma 5]{faltingsmordell}, we obtain $h(A') = h(A) - \frac{1}{2}\log \deg \phi.$ However, if we had such isogenies with arbitrarily large degree, we could produce abelian varieties over $K$ with arbitrarily small height. This is a contradiction of the Northcott property for abelian varieties over $K,$ as there are only finitely many isomorphism classes of abelian varieties with height at most a given constant.
\par We compare this to previous results. In \cite{BCfaaSurfaces}, Bost and Charles show that if $\mathcal{E}$ is an integral regular model over $\mathcal{O}_K$ of an elliptic curve $E/K,$ then $\pietgeo(\mathcal{E})$ is finite. They explicitly state this for $K = \mathbb{Q},$ but they remark that their methods carry over to the general $K$ case. Their approach is to study a condition called pseudoconcavity, a positivity condition, for formal-analytic arithmetic surfaces. Meanwhile, the work of Katz and Lang in \cite{KatzLang} implies that if $A/\mathbb{Q}$ is an abelian variety with N\'{e}ron model $\mathcal{A}/\mathbb{Z},$ then $\piet(\mathcal{A})$ is finite (as observed in \cite[Discussion after Corollary 9.3.4]{BCfaaSurfaces}). This result is obtained by studying the group of Galois coinvariants of the adelic Tate module of an abelian variety over any finitely generated field, as in \cite[Theorem 1 (bis)]{KatzLang}. However, their work doesn't imply finiteness of $\pietgeo(\mathcal{A})$ for arbitrary number fields because the coinvariant group only captures information about covers of $\mathcal{A}$ where the pre-image of the identity section $\Spec \mathcal{O}_K \rightarrow \mathcal{A}$ is a disjoint union of $\Spec \mathcal{O}_K.$
\par We conjecture that the size of $\pietgeo(\mathcal{A})$ should only depend on $g$ and $K,$ as follows.
\begin{conj}\label{conj: abelian variety uniformity}
Given a positive integer $g \geq 1$ and a number field $K,$ there exists a constant $C,$ depending only on $K$ and $g,$ such that the following holds. Let $A/K$ an abelian variety over $K$ with dimension $g.$ Then, if $\mathcal{A}$ is its N\'{e}ron model over $\mathcal{O}_K,$ then $|\pietgeo(\mathcal{A})| \leq C.$ 
\end{conj}
We prove this conjecture for elliptic curves (that is, the $g = 1$ case), as follows.
\begin{restatable}{thm}{uniformity}\label{thm: uniformity for elliptic curves}
Let $K$ be a number field. There exists a constant $C,$ only depending on $K,$ such that the following holds. Suppose $E$ be an elliptic curve over $K,$ and let $\mathcal{E}$ be its N\'{e}ron model. Then, $|\pietgeo(\mathcal{E})| \leq C.$
\end{restatable}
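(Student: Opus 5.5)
We will prove Theorem \ref{thm: uniformity for elliptic curves} by converting every connected finite \'{e}tale cover of $\mathcal{E}$ that is geometric (i.e.\ comes from $\pietgeo(\mathcal{E})$) into an isogeny, and then bounding the degree of such isogenies uniformly in terms of $K$ alone. The first step is structural. Given a connected Galois finite \'{e}tale cover $\mathcal{Y} \to \mathcal{E}$ whose restriction to the generic fiber is geometrically connected, the generic fiber $Y \to E$ is a finite \'{e}tale cover of an elliptic curve. After passing to a finite extension of $K$ of bounded degree, in order to acquire a rational point over the identity, we may take $Y$ to be an elliptic curve and $Y \to E$ to be an isogeny $\phi$. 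By the N\'{e}ron mapping property $\mathcal{Y}$ is the N\'{e}ron model of $Y$. Hence $\pietgeo(\mathcal{E})$ is the profinite limit of the duals of $\ker \phi$, ranging over isogenies $\phi: E' \to E$ whose induced map of N\'{e}ron models is \'{e}tale. The group is then finite abelian, by Theorem \ref{thm: basic finiteness} together with this description. The work is to bound its order, equivalently the maximal degree of such a $\phi$, uniformly.

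The key observation is that if $\mathcal{E}' \to \mathcal{E}$ is \'{e}tale, then its kernel is a finite \'{e}tale group scheme over $\mathcal{O}_K$. Passing to the dual isogeny $\hat\phi: E \to E'$, the kernel of $\hat\phi$ is the Cartier dual $(\ker\phi)^\vee$. An \'{e}tale group scheme over all of $\mathcal{O}_K$ is unramified everywhere, so it becomes constant over the finite extension $L/K$ cut out by its Galois action. The extension $L$ is unramified over $K$ and hence contained in the Hilbert class field tower. Its degree is bounded, because the action factors through $\mathrm{GL}_2(\mathbb{Z}/n)$ of an abelian-by-bounded quotient of $\piet(\Spec\mathcal{O}_K)$; I would instead use only the maximal abelian unramified piece, as explained below. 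Over such an $L$, the group $\ker\phi$ consists of rational points of $E'$. Then Merel's theorem, applied over $L$ with $[L:\mathbb{Q}]$ bounded in terms of $K$, bounds $|E'(L)_{\mathrm{tors}}|$ and hence $\deg\phi$.

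The main obstacle is making the field $L$ have degree bounded only in terms of $K$. The Galois image on $\ker\phi$ is a quotient of $\piet(\Spec \mathcal{O}_K)$, which can be infinite (class field towers), so it is not automatically bounded. The first thing I would try is to reduce to the case where $\ker\phi$ is cyclic of prime power order, or to $\ker\phi \cong \mathbb{Z}/n\times\mathbb{Z}/m$. In either case the Galois action lands in a group whose relevant part is solvable of bounded derived length. Specifically, it lies in the automorphism group of $\mathbb{Z}/n$, which is abelian, or in $\mathrm{GL}_2$ for the full torsion case. For the full torsion part $E'[m]\subset \ker\phi$, the Weil pairing forces $\mu_m \subset \mathcal{O}_K$ in the \'{e}tale sense, which bounds $m$ by the roots of unity of $K$. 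For the remaining cyclic part, the action is through a character of $\mathrm{Gal}(H/K)$, where $H$ is the Hilbert class field, so $[L:K]\le h_K$. With this reduction we apply Merel over $H$ (of degree $h_K[K:\mathbb{Q}]$) to bound the cyclic part. The constant $C$ is then the product of the Merel bound for degree $h_K[K:\mathbb{Q}]$ and the count $|\mu(K)|^2$, up to the bounded extension used in the first step.
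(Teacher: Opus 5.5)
\textbf{The overall route.} Your core mechanism is the paper's. The kernel of an isogeny $\phi\colon E'\to E$ whose N\'{e}ron-model map is finite \'{e}tale spreads out to a finite \'{e}tale group scheme over $\mathcal{O}_K$, so the Galois action on it is unramified. On a cyclic group this action is abelian, so it factors through the (narrow) Hilbert class field $H$, and Merel's theorem over $H$ bounds the degree.

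\textbf{The first step does not work as written.} You cannot ``pass to a finite extension of bounded degree'' to get a point over the identity. The fiber of the cover over the unit section $s$ is finite \'{e}tale over $\mathcal{O}_K$, of the same degree as the cover. So the extension you need is bounded only by the quantity you are trying to bound, and Merel over it gives nothing uniform. The fix is the paper's choice of cover. Since $\pietgeo(\mathcal{E})$ is finite (Theorem~\ref{thm: basic finiteness}), $s_*\piet(\Spec\mathcal{O}_K)$ is open. The cover attached to $\piet(\mathcal{E})/s_*\piet(\Spec\mathcal{O}_K)$ pulls back along $s$ to a cover with a fixed point, i.e.\ it has an $\mathcal{O}_K$-section. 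Lemma~\ref{lem: generic fiber finite etale covers are abelian} and Proposition~\ref{prop: finite etale covers are neron models} then give an isogeny of N\'{e}ron models over $K$ itself. By the semidirect product decomposition, its degree is exactly $|\pietgeo(\mathcal{E})|$. No base change is needed, and the dual-isogeny/Cartier-dual detour is irrelevant.

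\textbf{The non-cyclic part.} Your handling here differs from the paper's and overclaims. The paper shows this part never occurs. If $E'[m]\subset\ker\phi$ with $m\ge 2$, then $\phi$ factors through $[m]$, which is zero on the Lie algebra of the special fiber at a prime dividing $m$. So the N\'{e}ron-model map is not \'{e}tale (Lemma~\ref{lem: multiplication by N not etale}), $\ker\phi$ is cyclic, and one application of Merel finishes.

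Your Weil pairing argument only shows that $K(\mu_m)$ lies in the unramified field of definition of $E'[m]$, i.e.\ that $K(\mu_m)/K$ is unramified. It does not give $\mu_m\subset K$: for $K=\mathbb{Q}(\sqrt{-15})$, $K(\mu_3)$ is the Hilbert class field. So your factor $|\mu(K)|^2$ is unjustified. The argument still suffices, though. Since $\mathbb{Q}(\mu_{p^k})$ is totally ramified at $p$, unramifiedness forces $p^{k-1}(p-1)\le[K:\mathbb{Q}]$ for each $p^k \| m$, which bounds $m$ in terms of $K$.

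You must also take the ``remaining cyclic part'' to be the quotient $\ker\phi/E'[m]\cong\ker\psi$, where $\phi=\psi\circ[m]$; a cyclic subgroup of order $n$ need not be Galois-stable. The points of $\ker\psi$ lie in $E'(H)$, so Merel bounds $n/m$, and hence bounds $\deg\phi=(n/m)m^2$. With these repairs your route works, though with a worse constant than the paper's.
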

\par In the case when $K = \mathbb{Q},$ we are able to obtain the following explicit description of what the possible \'{e}tale fundamental groups are.
\begin{restatable}{thm}{classificationQ}\label{thm: classification for ECs over Q}
Suppose that $E/\mathbb{Q}$ is an elliptic curve with N\'{e}ron model $\mathcal{E}/\mathbb{Z}.$ Then, $\piet(\mathcal{E}) \simeq \mathbb{Z}/N\mathbb{Z}$ for some $N \in \{1, 2, 3, 5\}.$ Furthermore, all of these cases are achieved, and $\mathbb{Z}/5$ is only possible if $E$ has (at worst) semistable reduction everywhere.
\end{restatable}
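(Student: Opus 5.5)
Since $\piet(\Spec \mathbb{Z})$ is trivial (Minkowski), $\piet(\mathcal{E}) = \pietgeo(\mathcal{E})$, and by Theorem \ref{thm: basic finiteness} this group is finite. The plan is to identify the connected finite étale covers of $\mathcal{E}$ with isogenies whose Néron-model map is étale, and then classify those isogenies using Mazur's theorem on rational torsion. Concretely, first I would show that every connected finite étale cover $\mathcal{Y} \to \mathcal{E}$ has generic fiber a torsor under an isogenous elliptic curve $E'$ with an isogeny $E' \to E$. This should follow from Lang--Serre applied over $\overline{\mathbb{Q}}$ together with descent. Because $\Spec \mathbb{Z}$ has no nontrivial covers, the identity section should lift, making the cover a genuine isogeny $\phi: E' \to E$ of Néron models that is étale. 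Its kernel is then a finite étale group scheme over the open locus $\mathcal{E}'$, i.e.\ the closure of $\ker \phi$ in $\mathcal{E}'$ is finite étale over $\mathbb{Z}$. A finite étale group scheme over $\mathbb{Z}$ is constant, so $\ker\phi \simeq$ a constant group $G \subset E'(\mathbb{Q})_{\mathrm{tors}}$. Moreover $\pietgeo(\mathcal{E})$ is abelian, being a quotient of the adelic Tate module, and it is the maximal such $G$ (an inverse limit that stabilizes by finiteness).

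Next I would constrain $G$. The condition that $G \subset E'(\mathbb{Q})$ extends to a finite étale subgroup of $\mathcal{E}'$ means the points of $G$ reduce injectively, and into the identity component's complement appropriately, at every prime. In particular, reduction modulo $2$ (and $3$) must be injective on $G$. For good reduction at $p$, $|G| \le |\widetilde{E}'(\mathbb{F}_p)| \le p+1+2\sqrt p$, and $G$ embeds in the component group times $\widetilde{E}'^0(\mathbb{F}_p)$ otherwise. At $p=2$ this bounds $|G|\le 5$ in the good-reduction case, and gives small bounds at multiplicative ($\mathbb{F}_2^\times$ trivial, component group cyclic) or additive (group $\mathbb{F}_2$ times component group of order $\le 4$) reduction. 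Combining this with Mazur's list and cyclicity (a noncyclic $G \supset E'[n]$ forces $\mu_n \subset \mathbb{Q}$, so $n=2$, and then $\mathbb{Z}/2\times\mathbb{Z}/2$ must be excluded by an unramifiedness/Weil-pairing argument at $2$) should leave $|G| \in \{1,2,3,4,5\}$. Order $4$ then has to be excluded separately. A cyclic $\mathbb{Z}/4$ quotient would need a $4$-torsion point reducing injectively mod $2$, so $\widetilde{E}'(\mathbb{F}_2)$ would have order $4$ or $5$ and contain it. That order is then $4$, and I would exclude it via the étaleness of $\ker\phi$ at $2$: the dual kernel $\mu$-type part must be étale, which conflicts with ordinary/supersingular structure. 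For order $5$, I would show the cover forces $\widetilde E'(\mathbb{F}_2)$ of order $5$, hence good reduction at $2$. I would then argue that additive reduction anywhere forces the $5$-torsion to land in a component group of order $\le 4$ or in $\mathbb{G}_a(\mathbb{F}_p)$ with $p\ne 5$, which is impossible except at $p=5$; at $p=5$ the étaleness of the kernel fails, giving semistability.

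Finally, for existence I would exhibit examples. $N=1$ comes from any curve with no such torsion. $N=5$ should come from $X_1(11)$-type curves: $11a3$ has a rational $5$-torsion point with étale kernel to $11a1$, and the quotient map $11a3\to 11a1$ gives the cover. $N=2,3$ come from suitable curves with rational $2$- or $3$-torsion, checked prime by prime, e.g.\ on the Néron-model side from Tate normal forms with good reduction at the relevant primes. The main obstacle is the first step: proving rigorously that every connected étale cover of $\mathcal{E}$ comes from an isogeny with constant kernel. This means controlling the non-identity components of the special fibers and lifting the identity section. I would first try to cite the structure of $\pietgeo$ established in the proof of Theorem \ref{thm: basic finiteness}, rather than rederive it.
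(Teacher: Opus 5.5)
There is a genuine gap. Your sieve, which is supposed to leave $|G|\in\{1,\dots,5\}$, does not work. Injectivity of reduction gives no bound when $E'$ has split multiplicative reduction at $2$. In that case $\mathcal{E}'_{\mathbb{F}_2}(\mathbb{F}_2)\cong\mathbb{Z}/v_2(\Delta_{E'})$: the identity component contributes $\mathbb{F}_2^\times=1$, and the component group is constant cyclic of unbounded order. The same happens at $3$.

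For instance, $y^2+xy+y=x^3-x^2-3x+3$ has $\Delta=-2^7\cdot 13$, and its rational $7$-torsion sits exactly in this component group at $2$. So $6,7,8,9,10,12$ all survive your local counts. So does $4$: an ordinary curve over $\mathbb{F}_2$ can have $\widetilde{E}'(\mathbb{F}_2)\cong\mathbb{Z}/4$, so ordinary/supersingular considerations give no obstruction, and the split multiplicative case is unconstrained anyway. Likewise, ``order $5$ forces good reduction at $2$'' is unfounded. The semistability claim for $N=5$ does still follow from the additive bound of Proposition \ref{prop: strong bounds for additive reduction case}.

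What is missing is global input.
\begin{itemize}
\item Composite degrees: the paper combines Faltings's isogeny formula with Silverman's formula for $h(E)$ and the scaling $\Delta_{E'}=\Delta_E^{p}$. This forces $|\Delta_E|<90$ and leaves a finite database check (Proposition \ref{prop: finite etale covers are prime}).
\item Degree $4$ with additive reduction: Tate's algorithm plus the same height estimate.
\item Degree $7$: the paper reduces to the nonexistence of a semistable curve with a rational $7$-torsion point and seventh-power minimal discriminant (Proposition \ref{prop: no seventh power discriminant and torsion}). Equivalently, it rules out coprime solutions of $A^2+B^3=1728C^7$ whose $j$-invariant lies under $X_1(7)$. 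This is done with the twists of $X(7)$ following Poonen--Schaefer--Stoll and a $7$-adic local test.
\end{itemize}
Nothing in your proposal addresses $|G|=7$ beyond local point counts, which it survives.

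Separately, your identification of covers with isogenies is wrong in the direction you need for existence. A finite \'etale kernel over $\mathbb{Z}$ is necessary but not sufficient for $\mathcal{E}'\to\mathcal{E}$ to be finite \'etale, so $\pietgeo(\mathcal{E})$ is not ``the maximal such $G$''. A finite \'etale map onto the connected $\mathcal{E}$ must also be surjective on every geometric special fiber. At a multiplicative prime this forces the component count to scale, $v_p(\Delta_{E'})=(\deg\phi)\,v_p(\Delta_E)$ (Lemma \ref{lem: component grows under finite etale}). For sufficiency the paper additionally needs the exact Faltings height drop (Proposition \ref{prop: sufficiency for finite etale}).

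Your $N=5$ example fails precisely here. The quotient of $y^2+y=x^3-x^2$ (with $\Delta=-11$) by its rational $5$-torsion is the curve with $\Delta=-11^5$. That kernel does extend to a constant \'etale group over $\mathbb{Z}$. However, the map of N\'eron models sends the connected $I_1$ fiber at $11$ into the identity component of the $I_5$ fiber, so it is \'etale but not finite. The correct example runs the other way: from the $\Delta=-11^5$ curve down to $y^2+y=x^3-x^2-7820x-263580$.

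The step you flag as the main obstacle is actually routine. The paper handles it with Lemma \ref{lem: generic fiber finite etale covers are abelian} and Proposition \ref{prop: finite etale covers are neron models}. The real difficulty is excluding $7$.
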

\par We obtain Theorem \ref{thm: uniformity for elliptic curves} by leveraging the uniform torsion theorem of Merel. Given an isogeny $E' \rightarrow E$ of elliptic curves over $K$ which spreads out to a finite \'{e}tale isogeny $\phi$ of the N\'{e}ron models, we look at the kernel of $\phi,$ which will be a finite \'{e}tale cover of $\Spec \mathcal{O}_K.$ Using this, we show that the points in the kernel of $E' \rightarrow E$ are defined over the Hilbert class field of $K.$ But then this puts us in the setting of Merel's theorem, as in \cite{merel}.
\par We note here that our result is weaker than Merel's. For instance, suppose that $E'\rightarrow E$ was an isogeny whose kernel was a $p$-group for some prime $p.$ Then, our statement only applies to settings where the kernel spreads out to be finite \'{e}tale over $\Spec \mathcal{O}_K$ (and in particular \'{e}tale over all places above $p$) whereas in the setting of Merel's theorem there is no such condition. 
\par For Theorem \ref{thm: classification for ECs over Q}, we begin by proving the statement in the case when $E$ has semistable reduction everywhere. We start from the list of possible subgroups in Mazur's torsion theorem (as in \cite{mazurtorsion}) and rule out all the subgroups other than the ones in our theorem statement. First, we show that for a finite \'{e}tale cover of N\'{e}ron models $\mathcal{E}' \rightarrow \mathcal{E}$ of degree $n,$ the discriminant of $E'$ is an $n$th power of that of $E.$ We do this by showing that the number of components in the special fibers at each place where $E$ has bad reduction gets scaled by $n.$ Combining this statement with Silverman's formula for the Faltings height (as in \cite[Proposition 1.1]{silvermanheights}), we conclude that such an isogeny must be either trivial or prime order. 
\par The bulk of the work is to rule out $\mathbb{Z}/7,$ which we do by interpreting the existence of such an isogeny to the existence of a rational point on $X(1)$ which is the image (under appropriate morphisms) of an integral point from two schemes: $\Spec \mathbb{Z}[A, B, C]/(A^2 + B^3 - 1728C^7) - \{A = B = C = 0\}$ and $X_1(7).$ We then follow part of the strategy of \cite{pss}, where the study the equation $A^2 + B^3 = C^7.$ Namely, we show that rational points from $X_1(7)$ must be the images of rational points from certain twists of the modular curve $X(7).$ We then verify with Sage code that such points satisfy certain $p$-adic conditions for $p = 2, 3, 7$ which are incompatible with being the image of an integral point from $\Spec \mathbb{Z}[A, B, C]/(A^2 + B^3 - 1728C^7) - \{A = B = C = 0\}.$ 
\par In the case where $E$ has a place of additive reduction, bounds on the component group of the geometric special fiber of $\mathcal{E}$ at that place give us the bound $|\piet(\mathcal{E})| \leq 4.$ To show that $4$ isn't attained as the degree of a finite \'{e}tale cover $\mathcal{E}' \rightarrow \mathcal{E},$ we again employ Silverman's formula, using Tate's algorithm for computing the special fiber (as in \cite[Chapter IV, \S 9]{advAEC}) to relate the discriminants of the source and target.
\par The structure of our paper is as follows. In Section \ref{sec: prelims}, we begin by defining the central object of our paper, $\pietgeo(\bullet),$ and proving some basic facts about it. We then turn to some useful properties of finite \'{e}tale morphisms over N\'{e}ron models of abelian varieties, including the fact that finite \'{e}tale covers of N\'{e}ron models of abelian varieties are still N\'{e}ron models of abelian varieties. In the next section, we prove Theorem \ref{thm: basic finiteness}. 
\par In the last two sections, we study elliptic curves in particular. We begin Section \ref{sec: elliptic curves} by describing how the discriminant of an elliptic curve changes when we pass to covers that spread out to finite \'{e}tale covers of the N\'{e}ron model. We then prove Theorem \ref{thm: uniformity for elliptic curves}. We conclude the paper by proving Theorem \ref{thm: classification for ECs over Q} and giving examples (using data from \cite{lmfdb}) that attain the listed \'{e}tale fundamental groups.
\subsection*{Acknowledgements}
The author would like to thank Mark Kisin for useful conversations and for feedback on earlier drafts of this paper. The author would also like to thank Xinyu Fang for useful discussions about \cite{pss}.
\section{Finite \'{E}tale Morphisms and N\'{e}ron Models}\label{sec: prelims}
In this section, we review through some facts from the theory of \'{e}tale fundamental groups, before proving some lemmas about finite \'{e}tale morphisms of abelian varieties and their N\'{e}ron models which we will need. 
\subsection{\'{E}tale Fundamental Groups}\label{subsec: etale fundamental groups}
We begin by briefly discussing some parts of the theory of \'{e}tale fundamental groups which we will be using. The reader interested in more details can refer to \cite[\href{https://stacks.math.columbia.edu/tag/0BQ6}{Chapter 0BQ6}]{stacks-project}, for instance.
\par First, recall that if we have a variety $X$ over $K$ which is geometrically connected, and $\overline{x}$ is a geometric point of $X,$ we have the short exact sequence (e.g. see \cite[\href{https://stacks.math.columbia.edu/tag/0BTX}{Tag 0BTX}]{stacks-project})
\[0 \rightarrow \piet(X_{\overline{K}}, \overline{x}) \rightarrow \piet(X, \overline{x}) \rightarrow G_K \rightarrow 0,\] where $G_K$ is the absolute Galois group of $K.$ The left group is referred to as the \textbf{geometric \'{e}tale fundamental group} of $X.$ One can think of this group as describing the finite \'{e}tale covers which arise geometrically, rather than from extensions of the base field. 
\par Now, suppose that $X$ is the generic fiber of some integral separated and finite type scheme $\mathcal{X} \rightarrow \Spec \mathcal{O}_K.$ We have a morphism $\piet(\mathcal{X}, \overline{x}) \rightarrow \piet(\Spec\mathcal{O}_K),$ with the latter being the Galois group of $K^{\text{unr}},$ the maximal everywhere unramified extension of $K,$ over $K$ (for instance, this follows from \cite[\href{https://stacks.math.columbia.edu/tag/0BQM}{Tag 0BQM}]{stacks-project}).
\par Combining this morphism with the short exact sequence described above yields the following diagram:
\[\begin{tikzcd}
	1 & {\piet(X_{\overline{K}}, \overline{x})} & {\piet(X, \overline{x})} & {G_K} & 1 \\
	&& {\piet(\mathcal{X}, \overline{x})} & {\piet(\Spec \mathcal{O}_K)}
	\arrow[from=1-1, to=1-2]
	\arrow[from=1-2, to=1-3]
	\arrow[from=1-3, to=1-4]
	\arrow[from=1-3, to=2-3]
	\arrow[from=1-4, to=1-5]
	\arrow[from=1-4, to=2-4]
	\arrow[from=2-3, to=2-4]
\end{tikzcd}\]
Since we know that the rightmost vertical map and the right map on the top row are both surjective, it follows that $\piet(\mathcal{X}, \overline{x}) \rightarrow \piet(\Spec \mathcal{O}_K)$ is also surjective. One would expect from the homotopy exact sequence (see \cite[\href{https://stacks.math.columbia.edu/tag/0C0J}{Tag 0C0J}]{stacks-project}), but we cannot directly use this result as we don't assume we have geometrically connected fibers.
\par Now, we will find it useful to have a notion for the ``geometric" part of the \'{e}tale fundamental group for this scheme $\mathcal{X},$ analogously to the notion of the geometric \'{e}tale fundamental group of a variety over $K.$ Inspired by the short exact sequence that we introduced earlier, we make the following definition (which we mentioned briefly in the introduction).
\begin{defn}
Suppose $\mathcal{X}$ is an integral separated and finite type scheme over $\Spec \mathcal{O}_K,$ which surjects onto $\Spec \mathcal{O}_K$ and whose generic fiber is a variety over $K.$ We let $\pietgeo(\mathcal{X}, \overline{x})$ be the kernel of the map $\piet(\mathcal{X}, \overline{x}) \rightarrow \piet(\Spec \mathcal{O}_K),$ and refer to this as the \textbf{geometric \'{e}tale fundamental group} of $\mathcal{X}.$
\end{defn}
\begin{rmk}
This notion is independent of basepoint in the following manner. Suppose that $\overline{x'}$ was another geometric basepoint of $\mathcal{X}.$ Recall (e.g. from \cite[\href{https://stacks.math.columbia.edu/tag/0BNC}{Tag 0BNC}]{stacks-project}) that the \'{e}tale fundamental group with basepoint at $\overline{x}$ is given by the automorphism group of the fiber functor $F_{\overline{x}},$ which is the functor from the category of finite \'{e}tale covers of $\mathcal{X}$ to the category of sets given by taking the fiber over $\overline{x}.$ Then, given our two basepoints, we have isomorphisms between the \'{e}tale fundamental groups of $\overline{x}$ and $\overline{x}'$ arising from an isomorphism between the fiber functors $F_{\overline{x}}, F_{\overline{x'}};$ see \cite[\href{https://stacks.math.columbia.edu/tag/0BND}{Tag 0BND}]{stacks-project}, for instance.
\par For any choice of isomorphism between the fiber functors, we get a corresponding isomorphism between the corresponding fiber functors for $\Spec \mathcal{O}_K,$ giving us compatible isomorphisms between the \'{e}tale fundamental groups of $\mathcal{X}, \Spec \mathcal{O}_K$ at the basepoints $\overline{x}, \overline{x}'.$ Using these isomorphisms, $\pietgeo(\mathcal{X}, \overline{x})$ is identified with $\pietgeo(\mathcal{X}, \overline{x}').$ 
\par For this reason, we can be permissive with our choice of basepoint, and we sometimes omit it from our notation.
\end{rmk}
If $X$ has a $K$-rational point, we have a splitting of the short exact sequence above, yielding the decomposition \[\piet(X) = \piet(X_{\overline{K}}) \rtimes G_K.\] Similarly, if $\mathcal{X}$ has a $\Spec \mathcal{O}_K$ section (for instance, if $\mathcal{X}$ is the N\'{e}ron model of an abelian varietie), we have the semidirect product expression \[\piet(\mathcal{X}) = \pietgeo(\mathcal{X}) \rtimes \Spec \mathcal{O}_K,\] again from the splitting of a short exact sequence.
\par In the case where $\mathcal{X}$ is normal, the morphism $X \rightarrow \mathcal{X}$ induces a surjection of \'{e}tale fundamental groups. See \cite[\href{https://stacks.math.columbia.edu/tag/0BQM}{Tag 0BQM}]{stacks-project}, for instance; the point is that we can identify these \'{e}tale fundamental groups as being Galois groups of certain extensions of the function field of $\mathcal{X}$ (which is also the function field of $X$). We finish this subsection by showing that, under an additional hypothesis, we may say the same about the geometric \'{e}tale fundamental groups.
\begin{lemma}\label{lem: generic surjects onto integral for geometric}
Let $\mathcal{X} \rightarrow \Spec \mathcal{O}_K$ be a separated, surjective, and finite type morphism from an integral normal scheme. Suppose its generic fiber $X \rightarrow \Spec K$ is geometrically connected, and $\mathcal{X}$ has a $\mathcal{O}_K$-point. Then, we have a map $\piet(X_{\overline{K}}) \rightarrow \pietgeo(\mathcal{X})$ induced from $X_{\overline{K}} \rightarrow \mathcal{X},$ which furthermore is a surjection.
\end{lemma}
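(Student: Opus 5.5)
The plan is to build the map from the commutative diagram of Subsection \ref{subsec: etale fundamental groups} and then prove surjectivity by a diagram chase. The chase will combine three inputs: the surjection $\piet(X) \to \piet(\mathcal{X})$ for normal $\mathcal{X}$, the short exact sequence for $X$ over $K$, and the splittings coming from the $\mathcal{O}_K$-point.

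\emph{Constructing the map.} I fix a geometric point $\overline{x}$ of $X_{\overline{K}}$ and use its images in $X$ and $\mathcal{X}$ as basepoints. The composite $X_{\overline{K}} \to X \to \mathcal{X}$ gives a homomorphism $\piet(X_{\overline{K}}) \to \piet(\mathcal{X})$. Following it with $\piet(\mathcal{X}) \to \piet(\Spec \mathcal{O}_K)$ gives the map induced by $X_{\overline{K}} \to \Spec \mathcal{O}_K$. This map also factors as
\[\piet(X_{\overline{K}}) \to \piet(X) \to G_K \to \piet(\Spec\mathcal{O}_K),\]
where the first two maps compose to the trivial map by exactness of the top row. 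Hence the image of $\piet(X_{\overline{K}})$ lies in the kernel $\pietgeo(\mathcal{X})$, which gives the map in the statement.

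\emph{Surjectivity.} Let $s: \Spec \mathcal{O}_K \to \mathcal{X}$ be the given section, and let $s_K$ be its generic fiber, a $K$-point of $X$. By functoriality, $s$ and $s_K$ induce sections $\sigma: \piet(\Spec \mathcal{O}_K) \to \piet(\mathcal{X})$ and $\sigma_K: G_K \to \piet(X)$. This requires identifying fundamental groups at different basepoints, which I handle by the path-independence discussed in the Remark. Naturality of the square formed by $\Spec K \to \Spec \mathcal{O}_K$ and $X \to \mathcal{X}$ gives
\[\rho \circ \sigma_K = \sigma \circ r \quad \text{up to conjugation},\]
where $\rho: \piet(X) \to \piet(\mathcal{X})$ and $r: G_K \to \piet(\Spec \mathcal{O}_K)$ are the natural maps.

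Now take $\gamma \in \pietgeo(\mathcal{X})$. By surjectivity of $\rho$, which holds because $\mathcal{X}$ is normal (Tag 0BQM), choose $\delta \in \piet(X)$ with $\rho(\delta) = \gamma$, and let $g$ be the image of $\delta$ in $G_K$. Put $\delta' = \delta \cdot \sigma_K(g)^{-1}$. This element maps trivially to $G_K$, so it lies in $\piet(X_{\overline{K}})$. Its image in $\piet(\mathcal{X})$ is $\gamma \cdot \sigma(r(g))^{-1}$. Moreover $r(g)$ equals the image of $\gamma$ in $\piet(\Spec \mathcal{O}_K)$, which is trivial because $\gamma \in \pietgeo(\mathcal{X})$. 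So $\delta'$ maps to $\gamma$, which proves surjectivity.

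\emph{Main obstacle: basepoints.} The delicate step is the basepoint bookkeeping. The sections $\sigma$ and $\sigma_K$ are naturally based at the points $s(\overline{\eta})$ and $s_K(\overline{\eta})$, not at $\overline{x}$. The compatibility $\rho \circ \sigma_K = \sigma \circ r$ therefore holds only after choosing compatible paths. To make this clean, I will take $\overline{x}$ to be the geometric point $s_K(\overline{\eta})$ itself. With this choice all the maps are based compatibly and the equality holds on the nose.

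If this choice is inconvenient, there is a fallback that avoids sections entirely. Since the image of $\piet(X_{\overline{K}})$ is normal in $\piet(\mathcal{X})$, it is enough to show that every connected finite étale cover $\mathcal{Y} \to \mathcal{X}$ on which $\piet(X_{\overline{K}})$ acts trivially is pulled back from $\Spec \mathcal{O}_K$. In that case the generic fiber of $\mathcal{Y}$ is $X \times_K L$ for some finite extension $L/K$. Normality of $\mathcal{Y}$ then identifies $\mathcal{Y}$ with the normalization of $\mathcal{X}$ in $K(X) \otimes_K L$. Pulling back along $s$ shows that $L/K$ is unramified, so $\mathcal{Y} = \mathcal{X} \times_{\mathcal{O}_K} \mathcal{O}_L$. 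I would use this route only if the basepoint issues become messy.
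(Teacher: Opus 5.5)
Your proposal is correct and is essentially the paper's proof. You build the map from the vanishing of $\piet(X_{\overline{K}}) \to \piet(X) \to G_K \to \piet(\Spec \mathcal{O}_K)$. You then lift $\gamma$ to $\piet(X)$ using normality and correct the lift by $\sigma_K(g)^{-1}$, which maps trivially to $\piet(\mathcal{X})$ by the commutative square. Your choice of basepoint $s_K(\overline{\eta})$ matches the paper's remark that the argument is carried out at a geometric point above the section, so the fallback route is not needed.
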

\begin{proof}
The fact that this map exists follows because $\piet(X_{\overline{K}}) \rightarrow \piet(X) \rightarrow G_K \rightarrow \piet(\Spec \mathcal{O}_K)$ is zero, and so the map $\piet(X_{\overline{K}}) \rightarrow \piet(\mathcal{X})$ lands in the kernel of $\piet(\mathcal{X}) \rightarrow \piet(\Spec \mathcal{O}_K).$
\par To show surjectivity, suppose that $g \in \pietgeo(\mathcal{X});$ we know that we can lift this to some element $\tilde{g}$ of $\piet(X)$ by the normality assumption. Let $\overline{g}$ be its image in $G_K;$ we know that $\overline{g}$ lies in the kernel of $G_K \rightarrow \piet(\Spec \mathcal{O}_K).$ By assumption, we have a $\Spec \mathcal{O}_K$ point $x: \Spec \mathcal{O}_K \rightarrow \mathcal{X}$ (giving us a rational point $x_K: \Spec K \rightarrow X$) Using the morphism $x_K,$ we can map $\overline{g}$ back to some element $(x_K)_*\overline{g} \in \piet(X).$ Since $\overline{g}$ is annihilated under the map to $\piet(\Spec \mathcal{O}_K),$ using this commutative diagram
\[\begin{tikzcd}
	{G_K} && {\piet(X)} \\
	\\
	{\piet(\Spec \mathcal{O}_K)} && {\piet(\mathcal{X})}
	\arrow["{(x_K)_*}", from=1-1, to=1-3]
	\arrow[from=1-1, to=3-1]
	\arrow[from=1-3, to=3-3]
	\arrow["{x_*}"', from=3-1, to=3-3]
\end{tikzcd}\]
we know that $(x_K)_*\overline{g}$ lies in the kernel of the map $\piet(X) \rightarrow \piet(\mathcal{X}).$ The element $\tilde{g} ((x_K)_*\overline{g})^{-1}$ is therefore an element of $\piet(X_{\overline{K}})$ which maps to $g,$ proving surjectivity.
\end{proof}
\begin{rmk}
Strictly speaking, the above argument handles the \'{e}tale fundamental groups which have basepoint of $X_{\overline{K}}$ given on a geometric point above $x.$ However, once again, note that if we pick a different basepoint on $X_{\overline{K}},$ we can identify \'{e}tale fundamental groups at different basepoints via isomorphisms of fiber functors, which can be chosen compatibly across the different schemes (again utilizing \cite[\href{https://stacks.math.columbia.edu/tag/0BND}{Tag 0BND}]{stacks-project}). We thus obtain the same conclusion of surjectivity of $\piet(X_{\overline{K}}) \rightarrow \pietgeo(\mathcal{X})$ for this different choice of basepoint. 
\end{rmk} 
\subsection{Finite \'{E}tale Morphisms of N\'{e}ron Models}
We now restrict our focus to N\'{e}ron models of abelian varieties. First, recall the definition of the N\'{e}ron model: for an abelian variety $A/K,$ this is a smooth separated and finite type group scheme $\mathcal{A}$ over $\Spec \mathcal{O}_K$ satisfying the following universal property. If $\mathcal{Y} \rightarrow \Spec \mathcal{O}_K$ is smooth with generic fiber $Y,$ then a morphism $Y \rightarrow A$ over $K$ spreads out uniquely to a morphism $\mathcal{Y} \rightarrow \mathcal{A}$ (e.g. \cite[\S 1.2, Definition 1]{BLRNeron}). For a more thorough introduction on N\'{e}ron models, the reader can consult \cite{BLRNeron} for more information. 
\par We now collect some results about morphisms of abelian varieties and their N\'{e}ron models. We start with the following lemma, which is probably known to experts, but which the author couldn't find a reference for.
\begin{lemma}\label{lem: generic fiber finite etale covers are abelian}
Suppose that $A$ is an abelian variety over a field $K$ of characteristic zero, and $A' \rightarrow A$ is a connected finite \'{e}tale cover such that $A'/K$ has a $K$-point. Then, $A'$ can be given the structure of an abelian variety, such that $A' \rightarrow A$ is an isogeny. \end{lemma}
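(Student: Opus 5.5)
Fix a $K$-point $a' \in A'(K)$ and let $a \in A(K)$ be its image. Replacing $A'\to A$ by its composite with the translation $t_{-a}$ on $A$, we may assume $a' \mapsto 0$. So we have a connected finite étale cover $f: A' \to A$ together with a $K$-point $0' \in A'$ over $0 \in A$. The plan is to build the group structure over $\overline{K}$, where the classical theorem is available, and then descend it to $K$ using the rational point.

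\textbf{Step 1: geometric connectedness.} Since $A'$ is connected, normal and has a $K$-point, it is geometrically connected. Indeed, $\overline{K}\cap K(A')$ is a finite extension $L/K$, and $A'$ is a $\Spec L$-scheme. A $K$-point forces $L = K$, so $A'_{\overline{K}}$ is connected.

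\textbf{Step 2: the geometric case (Serre--Lang).} Over $\overline{K}$, $f_{\overline{K}} : A'_{\overline{K}} \to A_{\overline{K}}$ is a connected finite étale cover with a marked point $0'$ over $0$. By the Serre--Lang theorem (in characteristic zero one can argue directly), there is a unique abelian variety structure on $A'_{\overline{K}}$ with identity $0'$ making $f_{\overline{K}}$ a homomorphism. Concretely, $\piet(A_{\overline{K}})$ is abelian and isomorphic to $\widehat{T}(A)$. Hence every connected cover is Galois, and it is isomorphic to the pullback of the multiplication-by-$n$ isogeny via a quotient. Equivalently, it is $A_{\overline{K}}/H$ mapped to $A_{\overline{K}}$ through the isogeny $[n]$ factoring through it.

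The group law $m' : A'\times A' \to A'$ is characterized as the unique morphism that lifts $m\circ(f\times f)$ and sends $(0',0')$ to $0'$. Uniqueness holds because $A'\times A'$ is connected and $f$ is étale, so lifts agree once they agree at a point. The inverse is characterized in the same way.

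\textbf{Step 3: descent.} For $\sigma \in G_K$, the conjugate $\sigma(m')$ is again a lift of $m\circ(f\times f)$, since $m$ and $f$ are defined over $K$. It sends $(0',0')$ to $0'$, because $0'$ is $K$-rational. By the uniqueness of lifts, $\sigma(m') = m'$, and likewise for the inverse. Galois descent for morphisms of quasi-projective varieties then shows that $m'$, the inverse and $0'$ are defined over $K$. The group axioms hold because they can be checked after base change to $\overline{K}$. $A'$ is proper, smooth and geometrically connected, since it is finite étale over $A$ and by Step 1. So $A'$ is an abelian variety over $K$, and $f$ is a homomorphism with finite kernel that is surjective, i.e.\ an isogeny.

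Finally, compose back with the translation by $a$ if we want to keep the original map: $A' \to A$ is then an isogeny followed by a translation. Alternatively, choose the group structure on $A$ with origin $a$, which is isomorphic to the original abelian variety via translation.

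\textbf{Main obstacle.} The main obstacle is Step 2, the geometric statement that connected finite étale covers of abelian varieties are isogenies. I would cite Serre--Lang (Mumford, \emph{Abelian Varieties}, \S 18). Failing that, I would argue directly. First, the lift $m'$ of $m\circ(f\times f)$ exists because $\pi_1$ of a product is the product of the $\pi_1$'s in characteristic zero. Second, $m'$ satisfies associativity, since both sides are lifts of the same map agreeing at one point. The descent step is routine once uniqueness of lifts is noted.
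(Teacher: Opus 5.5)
Your argument is correct and follows the paper's skeleton: geometric connectedness from the rational point, Serre--Lang over $\overline{K}$, then descent of $m'$ and $i'$. The descent works because a lift through the \'{e}tale map $f$ from a connected source is determined by its value at one rational point. The paper phrases this as the constancy of $m'\circ(m'_\sigma, i'\circ m')$, which is the same rigidity. The one real difference is how you handle the origin, and there your version is the right one.

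\textbf{Where the paper goes wrong.} The paper tries to keep the given origin of $A$. It claims that $f^{-1}(0)$ has a $K$-point, obtained by ``base changing'' translation by $f(p)$ to an automorphism of $A'$. That step does not work. Pulling $f$ back along $t_{f(p)}$ only produces the different cover $t_{-f(p)}\circ f$, and a $K$-automorphism of $A'$ lifting $t_{f(p)}$ need not exist.

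\textbf{The literal statement can fail.} Read literally, with $A$ keeping its given origin, the lemma is false. Take an elliptic curve $E/K$ and $a\in E(K)\setminus 2E(K)$, for instance $E/\mathbb{Q}$ of positive rank. The map $f\colon E\to E$, $x\mapsto 2x+a$, is a connected finite \'{e}tale cover, and its source has $K$-points. But $f^{-1}(0)$ has no $K$-point. Since the origin of an abelian variety over $K$ is a $K$-point and a homomorphism must send it to $0$, no abelian variety structure on the source makes $f$ a homomorphism to $(E,0)$.

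\textbf{Why your version is the right one.} Your first move, replacing $f$ by $t_{-a}\circ f$, is exactly what is true. So is your hedged conclusion: an isogeny followed by a translation, or equivalently an isogeny onto $A$ re-centered at $a$. You should state outright that this is optimal: $f$ itself can be made an isogeny, for a suitable structure on $A'$, if and only if $f^{-1}(0)(K)\neq\emptyset$.

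This weaker form is all the paper's later arguments use. There, the cover either has a section over the identity section, or only its degree and the N\'{e}ron property of its total space matter.
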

\begin{proof}
Let $f: A' \rightarrow A$ be our finite \'{e}tale cover. Start by base changing this to $\overline{K},$ yielding a finite \'{e}tale cover $f_{\overline{K}}: A'_{\overline{K}} \rightarrow A_{\overline{K}};$ by \cite[\href{https://stacks.math.columbia.edu/tag/04KV}{Tag 04KV}]{stacks-project}, this is geometrically connected. The fact that this morphism is an isogeny of abelian varieties is a result of Serre and Lang (e.g. see \cite[\S 18, Theorem]{mumfordAV}). 
\par We now show that the claim holds over $K;$ the idea will be to show that all the data can be descended down to $K.$ By the above result, we know that we may equip $A'_{\overline{K}}$ with the structure of an abelian variety, such that $A'_{\overline{K}} \rightarrow A_{\overline{K}}$ is an isogeny of abelian varieties. This structure is given by the data of a multiplication map $m': A'_{\overline{K}} \times_{\overline{K}} A'_{\overline{K}} \rightarrow A'_{\overline{K}},$ an inverse map $i': A'_{\overline{K}} \rightarrow A'_{\overline{K}},$ and an identity section $e': \overline{K} \rightarrow A'_{\overline{K}},$ such that the cover $A' \rightarrow A$ is compatible with these morphisms when we base change the cover to $\overline{K}.$ We first show that there exists a choice of these three pieces of data which descends to $K,$ and thus makes $A'$ an abelian variety over $K.$
\par We start by showing that $A'_{\overline{K}}$ has a group structure such that the identity section $\Spec \overline{K} \rightarrow A'_{\overline{K}}$ is the base change of a map $\Spec K \rightarrow A'.$ By assumption, $A'$ has a $K$-rational point $p: \Spec K \rightarrow A',$ and its image under $A' \rightarrow A$ is also $K$-rational point. Translating by this point on $A$ gives us an isomorphism which we can base change to get an isomorphism of $A'.$ This isomorphism then takes some point $q: \Spec K \rightarrow A',$ which maps to the identity point in $A,$ and sends it to $p.$ Thus, the fiber of $A' \rightarrow A$ over the identity section of $A$ has a $K$-rational point. We may translate our group structure on $A'_{\overline{K}}$ such that the base change of $q$ to an $\overline{K}$-point is the identity. Note that the morphism $A'_{\overline{K}} \rightarrow A_{\overline{K}}$ will still be an isogeny with this new group structure on $A'_{\overline{K}},$ since it is the translation of an isogeny that sends the origin to itself.
\par We now claim this new group structure can be descended to $K.$ To do this, we apply Galois descent. Given an automorphism $\sigma$ of $\overline{K}$ fixing $K,$ let $m'_{\sigma}, i'_{\sigma}$ be the base change of the morphisms $m': A'_{\overline{K}} \times A'_{\overline{K}} \rightarrow A'_{\overline{K}}$ and $i': A'_{\overline{K}} \rightarrow A'_{\overline{K}}$ along the isomorphism $\sigma: \overline{K} \rightarrow \overline{K},$ respectively. Since $f_{\overline{K}}$ is an isogeny, if $m: A \times A \rightarrow A$ is the multiplication morphism for $A,$ we have the following commutative diagrams, the right one being the base change of the left one along $\sigma:$
\[\begin{tikzcd}
	{A'_{\overline{K}} \times_{\overline{K}} A'_{\overline{K}}} && {A'_{\overline{K}}} && {A'_{\overline{K}} \times_{\overline{K}} A'_{\overline{K}}} && {A'_{\overline{K}}} \\
	\\
	{A_{\overline{K}} \times_{\overline{K}} A_{\overline{K}}} && {A_{\overline{K}}} && {A_{\overline{K}} \times_{\overline{K}} A_{\overline{K}}} && {A_{\overline{K}}}
	\arrow["{m'}", from=1-1, to=1-3]
	\arrow["{f_{\overline{K}} \times f_{\overline{K}}}"{description}, from=1-1, to=3-1]
	\arrow["{f_{\overline{K}}}"{description}, from=1-3, to=3-3]
	\arrow["{m'_{\sigma}}", from=1-5, to=1-7]
	\arrow["{f_{\overline{K}} \times f_{\overline{K}}}"{description}, from=1-5, to=3-5]
	\arrow["{f_{\overline{K}}}"{description}, from=1-7, to=3-7]
	\arrow["{m_{\overline{K}}}"', from=3-1, to=3-3]
	\arrow["{m_{\overline{K}}}"', from=3-5, to=3-7]
\end{tikzcd}\]
Note that the only morphism that isn't already defined over $K$ is the top morphism, which is why the other three parts of the diagram are the same.
\par We can now combine these into the following diagram:
\[\begin{tikzcd}
	{A'_{\overline{K}} \times_{\overline{K}} A'_{\overline{K}}} &&&& {A'_{\overline{K}}} \\
	\\
	{A_{\overline{K}} \times_{\overline{K}} A_{\overline{K}}} &&&& {A_{\overline{K}}}
	\arrow["{m' \circ (m'_{\sigma}, i' \circ m')}", from=1-1, to=1-5]
	\arrow["{f_{\overline{K}} \times f_{\overline{K}}}"{description}, from=1-1, to=3-1]
	\arrow["{f_{\overline{K}}}"{description}, from=1-5, to=3-5]
	\arrow["{m_{\overline{K}} \circ (m_{\overline{K}}, i \circ m_{\overline{K}})}"', from=3-1, to=3-5]
\end{tikzcd}\]
Note that the bottom map is equal to $e_{\overline{K}};$ therefore, it follows that the top map factors through the base change of $e_{\overline{K}}: \Spec \overline{K} \rightarrow A_{\overline{K}}$ along $f_{\overline{K}},$ which is a finite disjoint union of $\Spec \overline{K}$ (since $f_{\overline{K}}$ is a finite \'{e}tale morphism). However, $A'_{\overline{K}}$ is connected, and thus $A'_{\overline{K}} \times_{\overline{K}} A'_{\overline{K}}$ is too (e.g. using \cite[Lemma 10.5.9]{vakil}). 
\par Hence, the image of the top map must be one of the $\Spec \overline{K};$ since our choice of unit section $e': \Spec \overline{K} \rightarrow A'_{\overline{K}}$ is defined over $K,$ we see that the image of $(e', e')$ is $e'$ (as $e'$ will be the identity section under the group law $m'_{\sigma}$ as well). Therefore, $m' \circ (m'_{\sigma}, i' \circ m')$ factors through $e',$ which means that $m'_{\sigma} = m'.$ But this holds for all $\sigma,$ which means by Galois descent that $m'$ descends to a morphism over $K.$ The exact same argument for $i'$ with this diagram also shows that $i'$ descends to $K.$
\[\begin{tikzcd}
	{A'_{\overline{K}}} && {A'_{\overline{K}}} \\
	\\
	{A_{\overline{K}}} && {A_{\overline{K}}}
	\arrow["{m' \circ (i'_{\sigma}, i' \circ i')}", from=1-1, to=1-3]
	\arrow["{f_{\overline{K}}}"{description}, from=1-1, to=3-1]
	\arrow["{f_{\overline{K}}}"{description}, from=1-3, to=3-3]
	\arrow["{m_{\overline{K}} \circ (i_{\overline{K}}, i_{\overline{K}} \circ i_{\overline{K}})}"', from=3-1, to=3-3]
\end{tikzcd}\]
Thus, $A'$ can be given the structure of an abelian variety (as one can then descend the commutative diagrams giving the various axioms these morphisms satisfy) such that $f: A' \rightarrow A$ becomes an isogeny, as it is compatible with the group structures over $K$ (since it is compatible over $\overline{K}$).
\end{proof}
While isogenies of abelian varieties of number fields are always finite \'{e}tale (e.g. by \cite[\S 7.3, Lemmas 1, 2]{BLRNeron}), this will not remain true when we spread out to the N\'{e}ron model. Indeed, the following (probably well-known) lemma below tells us that a lot of these morphisms do not spread out to \'{e}tale morphisms.
\begin{lemma}\label{lem: multiplication by N not etale}
Suppose that $\phi: A' \rightarrow A$ is an isogeny of abelian varieties over a number field $K$ whose kernel contains $A'[n]$ for some integer $n \geq 2.$ Then, the corresponding morphism of N\'{e}ron models $\mathcal{A}' \rightarrow \mathcal{A}$ over $\mathcal{O}_K$ is not \'{e}tale.
\end{lemma}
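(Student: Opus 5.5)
The plan is to reduce to a prime dividing $n$ and then look at tangent spaces along the identity section in a fiber of residue characteristic equal to that prime. Throughout I assume $\dim A' = g \geq 1$; otherwise $A'[n]$ is trivial and the hypothesis is vacuous.

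First, choose a prime $p \mid n$. Then $A'[p] \subseteq A'[n] \subseteq \ker \phi$. Since $[p]: A' \rightarrow A'$ is surjective with kernel $A'[p]$, the isogeny factors as $\phi = \psi \circ [p]$ for some homomorphism $\psi: A' \rightarrow A$ over $K$.

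Next, spread everything out to N\'{e}ron models. By the N\'{e}ron mapping property, $[p]$, $\psi$ and $\phi$ extend uniquely to morphisms $[p]_{\mathcal{A}'}: \mathcal{A}' \rightarrow \mathcal{A}'$, $\psi_{\mathcal{N}}: \mathcal{A}' \rightarrow \mathcal{A}$ and $\phi_{\mathcal{N}}: \mathcal{A}' \rightarrow \mathcal{A}$. By uniqueness of extensions, $\phi_{\mathcal{N}} = \psi_{\mathcal{N}} \circ [p]_{\mathcal{A}'}$. Also, $[p]_{\mathcal{A}'}$ is multiplication by $p$ for the group scheme structure on $\mathcal{A}'$, because both morphisms extend $[p]$ on the generic fiber.

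Now fix a prime $v$ of $\mathcal{O}_K$ lying over $p$, with residue field $k(v)$ of characteristic $p$. Suppose for contradiction that $\phi_{\mathcal{N}}$ is \'{e}tale. Then its base change $\mathcal{A}'_{k(v)} \rightarrow \mathcal{A}_{k(v)}$ is \'{e}tale, hence unramified. It sends the identity $e'_v$ (a $k(v)$-point) to the identity $e_v$ (also a $k(v)$-point). Since the residue fields at these two points agree, being unramified forces the induced map on Zariski tangent spaces
\[d\phi_v : \operatorname{Lie}(\mathcal{A}'_{k(v)}) \rightarrow \operatorname{Lie}(\mathcal{A}_{k(v)})\]
to be injective; indeed, being \'{e}tale between smooth $k(v)$-schemes of the same dimension, it is an isomorphism. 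Both Lie algebras have dimension $g \geq 1$, because $\mathcal{A}', \mathcal{A}$ are smooth of relative dimension $g$.

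On the other hand, for any commutative group scheme $G$ over a field, the differential of $[m]$ at the identity is multiplication by $m$ on $\operatorname{Lie}(G)$: the differential of the multiplication map at $(e,e)$ is addition. Hence $d([p]_{\mathcal{A}'})_v = p = 0$ on $\operatorname{Lie}(\mathcal{A}'_{k(v)})$, and
\[d\phi_v = d(\psi_{\mathcal{N}})_v \circ 0 = 0,\]
which is not injective on a nonzero space. This is the desired contradiction.

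The main point needing care is the compatibility of the factorization with the N\'{e}ron models, and identifying the extension of $[p]$ with multiplication by $p$ on $\mathcal{A}'$. Both follow from the uniqueness in the N\'{e}ron mapping property, since $\mathcal{A}'$ is smooth over $\mathcal{O}_K$. The tangent-space argument itself is routine.
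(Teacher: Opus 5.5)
Your proof is correct and takes essentially the same approach as the paper. Both arguments factor $\phi$ through multiplication by an integer that vanishes in the residue field at some prime of $\mathcal{O}_K$, extend to N\'{e}ron models, and use that the differential of this multiplication at the identity of the special fiber is zero. The only cosmetic differences are that you reduce to a prime $p \mid n$ and use the chain rule on tangent spaces. The paper instead keeps $[n]$ and transfers non-unramifiedness to $\mathcal{A}' \rightarrow \mathcal{A}$ through the surjection of relative differentials coming from the cotangent exact sequence.
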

\begin{proof}
First, we claim that for $n \geq 2,$ the morphism $[n]$ is not \'{e}tale. To check this, let $\mathfrak{p}$ be a prime of $\mathcal{O}_K$ dividing $n,$ and consider the special fiber of $\mathcal{A}' \xrightarrow{[n]} \mathcal{A}',$ which is multiplication by $n$ on the special fiber. By \cite[\href{https://stacks.math.columbia.edu/tag/0BF5}{Tag 0BF5}]{stacks-project}, we know that $[n]$ induces multiplication by $n$ on tangent spaces at the identity point. But $\mathfrak{p}|n,$ so this map is zero, and in particular not injective. Thus, by \cite[\href{https://stacks.math.columbia.edu/tag/0B2G}{Tag 0B2G}]{stacks-project}, $[n]$ is not unramified and thus not \'{e}tale over the fiber above $\mathfrak{p}.$ So $[n]: \mathcal{A}' \rightarrow \mathcal{A}'$ cannot be \'{e}tale.
\par To get the general statement, if the kernel of $\phi$ contains $[n],$ then we can factor $\phi$ as \[A' \xrightarrow{[n]} A' \rightarrow A\] and then extend these morphisms to N\'{e}ron models. Then, apply the cotangent exact sequence to see that \[\Omega^1_{\mathcal{A}'/\mathcal{A}} \rightarrow \Omega^1_{\mathcal{A}' \xrightarrow{[n]} \mathcal{A}'} \rightarrow 0\] is exact. In particular, as the second term is nonzero, the first term is too, so $\mathcal{A}' \rightarrow \mathcal{A}$ is not \'{e}tale, as desired.
\end{proof}
In the other direction, suppose we are actually given a connected finite \'{e}tale cover $\mathcal{A}' \rightarrow \mathcal{A}$ of the N\'{e}ron model $\mathcal{A}$ of an abelian variety $A.$ If we know that the cover has a $\Spec \mathcal{O}_K$ section, then by Lemma \ref{lem: generic fiber finite etale covers are abelian}, the generic fiber of this morphism is an isogeny of abelian varieties. In this case, it turns out that $\mathcal{A}'$ is also the N\'{e}ron model of its generic fiber, which we will prove shortly.
\begin{prop}\label{prop: finite etale covers are neron models}
Let $\mathcal{A}$ be the N\'{e}ron model of an abelian variety $A$ over a number field $K.$ Let $\mathcal{A}' \rightarrow \mathcal{A}$ be a connected finite \'{e}tale cover such that $\mathcal{A}'$ has a $\Spec \mathcal{O}_K$-point. Then, $\mathcal{A}'$ is the N\'{e}ron model of its generic fiber.
\end{prop}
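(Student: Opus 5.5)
We will verify the N\'{e}ron mapping property for $\mathcal{A}'$ directly, since $\mathcal{A}'$ already has most of the needed structure. Because $\mathcal{A}' \rightarrow \mathcal{A}$ is finite \'{e}tale and $\mathcal{A}$ is smooth, separated and of finite type over $\Spec \mathcal{O}_K$, so is $\mathcal{A}'$. Let $s: \Spec \mathcal{O}_K \rightarrow \mathcal{A}'$ be the given section. Then $\mathcal{A}'$ is normal and connected, hence integral, and its generic fiber $A'$ is connected, since it is a dense open of an integral scheme, and has the $K$-point $s_K$. Lemma \ref{lem: generic fiber finite etale covers are abelian} therefore makes $A'$ an abelian variety with identity $s_K$, and $f_K: A' \rightarrow A$ an isogeny, after translating so that $f_K(s_K)=e_A$ (we may compose $f$ with a translation of $\mathcal{A}$ by an $\mathcal{O}_K$-point). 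Let $\mathcal{N}'$ be the N\'{e}ron model of $A'$. By the N\'{e}ron property, the identity $A' \rightarrow A'$ extends to a morphism $\iota: \mathcal{A}' \rightarrow \mathcal{N}'$, since $\mathcal{A}'$ is smooth. We aim to show that $\iota$ is an isomorphism.

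The key step is to construct the universal property for $\mathcal{A}'$ by hand. Let $\mathcal{Y}$ be smooth over $\mathcal{O}_K$ and $u: Y \rightarrow A'$ a $K$-morphism. The composite $f_K\circ u$ extends to $v: \mathcal{Y} \rightarrow \mathcal{A}$. Form the pullback $\mathcal{Y}' = \mathcal{Y}\times_{\mathcal{A}}\mathcal{A}'$, which is finite \'{e}tale over $\mathcal{Y}$. The map $u$ gives a section of $\mathcal{Y}'_K \rightarrow Y$. We want this generic section to extend to a section of $\mathcal{Y}' \rightarrow \mathcal{Y}$. Take the closure $Z$ of its image in $\mathcal{Y}'$. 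Since $\mathcal{Y}'$ is normal and finite \'{e}tale over the normal scheme $\mathcal{Y}$, the generic section picks out, over each connected component of $\mathcal{Y}$, a connected component of $\mathcal{Y}'$ mapping with degree one. Such a component is finite \'{e}tale of degree $1$, hence an isomorphism. Inverting it gives the extension $\tilde u: \mathcal{Y} \rightarrow \mathcal{Y}' \rightarrow \mathcal{A}'$. Uniqueness follows from separatedness of $\mathcal{A}'$ together with schematic density of $Y$ in $\mathcal{Y}$, which holds because $\mathcal{Y}$ is flat over $\mathcal{O}_K$.

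The main obstacle is justifying this extension step cleanly. Smoothness gives normality, and connected components of a normal noetherian scheme are integral, so finite \'{e}tale over $\mathcal{Y}$ implies that each component of $\mathcal{Y}'$ has constant degree over its image component. The worry is that a component of $\mathcal{Y}$ could have empty generic fiber. We handle this by noting that the N\'{e}ron property is tested on $\mathcal{Y}$ with dense generic fiber, or by restricting to the flat closure. Components with empty generic fiber would have to lie inside a single closed fiber, but $\mathcal{Y}$ is flat over $\mathcal{O}_K$, so its generic fiber is dense in every component and this case does not occur.

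Once the universal property is established, $\mathcal{A}'$ satisfies the defining property of the N\'{e}ron model. Hence $\iota$ is an isomorphism, by uniqueness of objects with a universal property: the identity on $A'$ extends in both directions and the composites agree generically, so they are the identity. As a consequence, the group law on $A'$ extends to $\mathcal{A}'$ as well.
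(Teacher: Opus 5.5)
Your proof is correct, but it takes a genuinely different route from the paper.

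\textbf{The paper's route.} The paper never checks the mapping property for $\mathcal{A}'$ directly. It takes the N\'{e}ron model $\mathcal{N}$ of the abelian variety $A'$, which is why it genuinely needs Lemma \ref{lem: generic fiber finite etale covers are abelian}. It extends the identity of $A'$ to $g:\mathcal{A}'\to\mathcal{N}$ with $f=f'\circ g$. It deduces that $g$ is finite, because $f$ is finite and $f'$ is separated. It then concludes that $g$ is an isomorphism, since $g$ is a finite birational morphism of integral schemes with normal target.

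\textbf{Your route.} You show instead that the N\'{e}ron mapping property passes to finite \'{e}tale covers. The key input is that a section of a finite \'{e}tale cover of a normal scheme, given over a dense open containing all generic points, extends uniquely. Your phrasing of this step is a bit loose; you introduce $Z$ and then do not use it. The clean version runs as follows. Take a connected component $\mathcal{Y}_0$ of $\mathcal{Y}$. Let $\sigma$ denote the generic section $(\mathrm{id},u)$ of $\mathcal{Y}'_K\to Y$, and let $\eta$ be the generic point of $\mathcal{Y}_0$. The point $\sigma(\eta)$ is a generic point of the normal scheme $\mathcal{Y}'\times_{\mathcal{Y}}\mathcal{Y}_0$, since it lies over a generic point under an \'{e}tale map. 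So its closure is an irreducible, hence connected, component. The degree of that component over $\mathcal{Y}_0$ is the residue degree at $\eta$, which is $1$ because of the section, so the component maps isomorphically onto $\mathcal{Y}_0$.

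\textbf{Comparison.} The paper's argument is shorter, given existence of N\'{e}ron models and the finite-birational criterion. Yours is more self-contained and more general. The extension argument uses neither connectedness of $\mathcal{A}'$, nor the $\mathcal{O}_K$-point, nor that $A'$ is an abelian variety. It therefore shows that every finite \'{e}tale cover of a N\'{e}ron model is the N\'{e}ron model of its generic fiber, viewed as a smooth separated $K$-scheme. The hypotheses of the proposition are then needed only through Lemma \ref{lem: generic fiber finite etale covers are abelian}, to know that $A'$ is an abelian variety and $f_K$ an isogeny, which is how the paper uses the result later. For the same reason, your translation step and the final comparison with $\mathcal{N}'$ are not logically needed for the N\'{e}ron-model claim. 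The comparison is just the usual uniqueness of N\'{e}ron models.
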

\begin{proof}
By Lemma \ref{lem: generic fiber finite etale covers are abelian}, we know that the generic fiber of $f: \mathcal{A}' \rightarrow \mathcal{A}$ is an isogeny of abelian varieties $A' \rightarrow A.$ Let $\mathcal{N}$ be the N\'{e}ron model of $A';$ then, by the N\'{e}ron mapping property on both $\mathcal{A}, \mathcal{N},$ since $\mathcal{A}'$ is smooth separated and finite type (being finite \'{e}tale over the smooth separated and finite type $\mathcal{A}$), we have the following commutative diagram:
\[\begin{tikzcd}
	{\mathcal{A}'} && {\mathcal{N}} \\
	\\
	& {\mathcal{A}}
	\arrow["g"{description}, from=1-1, to=1-3]
	\arrow["f"{description}, from=1-1, to=3-2]
	\arrow["{f'}"{description}, from=1-3, to=3-2]
\end{tikzcd}\]
However, notice that $g$ is finite, since $f$ is finite and $f'$ is separated (which itself follows since $f'$ is a morphism between separated schemes over $\mathcal{O}_K$). Furthermore, $g$ extends the identity morphism over the generic fiber, and so in particular is birational. We also note that $\mathcal{A}'$ and $\mathcal{N}$ are integral; reducedness follows from \cite[\href{https://stacks.math.columbia.edu/tag/034E}{Tag 034E}]{stacks-project}, and irreducibility follows from the fact that they are smooth over the connected $\Spec \mathcal{O}_K,$ so their generic points must lie in their generic fibers (e.g. using \cite[\href{https://stacks.math.columbia.edu/tag/03HV}{Tag 03HV}]{stacks-project}), meaning that the only generic point for either scheme is the generic point of $A'.$ Furthermore, $\mathcal{A}', \mathcal{N}$ are normal (since they are smooth over $\Spec \mathcal{O}_K,$ so we can use \cite[\href{https://stacks.math.columbia.edu/tag/034F}{Tag 034F}]{stacks-project}, for instance). Therefore, by \cite[\href{https://stacks.math.columbia.edu/tag/0AB1}{Tag 0AB1}]{stacks-project}, $g$ is actually an isomorphism, so thus $\mathcal{A}'$ is the N\'{e}ron model of its generic fiber, as desired.
\end{proof}
We conclude with a factorization lemma for finite \'{e}tale morphisms between N\'{e}ron models.
\begin{lemma}\label{lem: finite etale factors into finite etales}
Suppose that $A' \rightarrow A'' \rightarrow A$ is a sequence of isogenies of abelian varieties over $K,$ with the corresponding sequence of morphisms of their N\'{e}ron models $\mathcal{A}' \rightarrow \mathcal{A}'' \rightarrow \mathcal{A}.$ Suppose furthermore the composition $\mathcal{A}' \rightarrow \mathcal{A}$ is finite \'{e}tale. Then, each of the two morphisms in the composition are also finite \'{e}tale.
\end{lemma}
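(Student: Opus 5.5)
We have morphisms $g: \mathcal{A}' \to \mathcal{A}''$ and $h: \mathcal{A}'' \to \mathcal{A}$ with $h\circ g$ finite étale. We show $g$ is finite étale first and then deduce the same for $h$.

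\textbf{Finiteness.} All three N\'{e}ron models are separated over $\mathcal{O}_K$, so $h$ is separated. Since $h \circ g$ is finite (in particular proper), $g$ is proper. For quasi-finiteness, on each fiber $g$ restricts to a homomorphism of smooth group schemes. Its composite with $h$ is quasi-finite, so the kernel of $g$ on each fiber is finite and $g$ is quasi-finite. Proper and quasi-finite gives finite.

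\textbf{$g$ is étale.} Take the cotangent exact sequence
\[g^*\Omega^1_{\mathcal{A}''/\mathcal{A}} \rightarrow \Omega^1_{\mathcal{A}'/\mathcal{A}} \rightarrow \Omega^1_{\mathcal{A}'/\mathcal{A}''} \rightarrow 0,\]
exactly as in the proof of Lemma~\ref{lem: multiplication by N not etale}. The middle term vanishes because $h\circ g$ is étale, so $\Omega^1_{\mathcal{A}'/\mathcal{A}''}=0$ and $g$ is unramified. Both $\mathcal{A}'$ and $\mathcal{A}''$ are smooth over $\mathcal{O}_K$ of the same relative dimension $g$, since isogenous abelian varieties have equal dimension. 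A morphism of smooth $\mathcal{O}_K$-schemes of equal relative dimension that is unramified is étale: on each fiber it is an unramified map between smooth varieties of equal dimension, hence étale, and then the fibral criterion for flatness (or the Jacobian criterion) upgrades this to étale over the base. So $g$ is finite étale.

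\textbf{$h$ is étale.} This is the step that needs more care. First, $g$ is surjective. It is finite and flat, and its image is open and closed, so it is all of the connected $\mathcal{A}''$. Alternatively, $g$ is a surjection of group schemes on generic fibers and has open image containing the generic fiber, and fiberwise the image is an open subgroup containing the identity component, which then has to be checked.

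To avoid these component issues, I would argue at the level of tangent spaces. For any point $y \in \mathcal{A}''$, choose $x \in \mathcal{A}'$ with $g(x)=y$, using surjectivity. On fibers, $d(h\circ g)_x = dh_y \circ dg_x$ is an isomorphism and $dg_x$ is an isomorphism, so $dh_y$ is an isomorphism. Hence $h$ is unramified at $y$, and by the same equal-dimension smoothness argument it is étale.

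Finiteness of $h$ follows from the fpqc descent of finiteness along the surjective finite flat $g$: $h\circ g$ is finite and $g$ is finite, faithfully flat, and surjective. Alternatively, $h$ is separated and quasi-finite, and it is universally closed because $h\circ g$ is proper and $g$ is surjective.

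The main obstacle is the surjectivity of $g$. In the fiber over $\mathfrak{p}$, the image of $g$ is open, because $g$ is étale, and closed, because $g$ is finite. So the image is a union of connected components of the special fiber of $\mathcal{A}''$, and the danger is that it misses some components of the component group. If that happens, I would instead argue directly using the open-and-closed image in the connected total space $\mathcal{A}''$. Connectedness holds because $\mathcal{A}''$ is integral, being smooth with irreducible generic fiber, as in the proof of Proposition~\ref{prop: finite etale covers are neron models}. A nonempty open and closed subset of a connected space is everything, so $g$ is surjective.
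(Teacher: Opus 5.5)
Your proof is correct, but it runs in the opposite order from the paper's and uses different tools. The paper treats $\mathcal{A}'' \to \mathcal{A}$ first, by Galois theory. Because $\piet(A) \to \piet(\mathcal{A})$ is surjective, restriction to the generic fiber is fully faithful on finite \'{e}tale covers. A quotient of the $\piet(\mathcal{A})$-set attached to $A' \to A$ is again a $\piet(\mathcal{A})$-set, so $A'' \to A$ extends to a finite \'{e}tale cover of $\mathcal{A}$. Proposition \ref{prop: finite etale covers are neron models} then identifies that cover with $\mathcal{A}''$; this tacitly needs an $\mathcal{O}_K$-point on it. Only afterwards does the paper get $\mathcal{A}' \to \mathcal{A}''$, from separatedness and the two-out-of-three property for \'{e}tale maps.

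Your route is local instead. The cotangent sequence, together with the fact that all three models are smooth of the same relative dimension, makes $\mathcal{A}' \to \mathcal{A}''$ \'{e}tale directly. Then $\mathcal{A}'' \to \mathcal{A}$ inherits \'{e}taleness and finiteness from the composite once $\mathcal{A}' \to \mathcal{A}''$ is surjective. What this buys you is that you never use the $\piet$ formalism, full faithfulness, or Proposition \ref{prop: finite etale covers are neron models}. What the paper's route buys is that it never needs surjectivity of $\mathcal{A}' \to \mathcal{A}''$ and stays in the $\piet$-set language used throughout.

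A few clean-ups:
\begin{itemize}
\item Your hesitation about surjectivity can be dropped. Finiteness alone makes the image of $\mathcal{A}' \to \mathcal{A}''$ closed, and the image contains the generic point of the integral scheme $\mathcal{A}''$, so it is everything. The ``missing components'' scenario cannot occur once the map is finite.
\item ``fpqc descent of finiteness along $\mathcal{A}' \to \mathcal{A}''$'' is not the right formulation, since descent goes along base change of the target. Your alternative argument is correct, though: the map is separated, quasi-finite, and universally closed because $\mathcal{A}' \to \mathcal{A}''$ is surjective.
\item The tangent-space step is cleanest in terms of differentials. Since $\mathcal{A}' \to \mathcal{A}''$ is \'{e}tale, the pullback of $\Omega^1_{\mathcal{A}''/\mathcal{A}}$ is isomorphic to $\Omega^1_{\mathcal{A}'/\mathcal{A}} = 0$. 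Surjectivity plus Nakayama then gives $\Omega^1_{\mathcal{A}''/\mathcal{A}} = 0$. Alternatively, just cite that \'{e}taleness is local on the source in the \'{e}tale topology.
\end{itemize}
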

\begin{proof}
First, to show that $\mathcal{A}'' \rightarrow \mathcal{A}$ is finite \'{e}tale, recall that $\piet(A) \rightarrow \piet(\mathcal{A})$ is surjective (since $\mathcal{A}$ is integral), meaning that by \cite[\href{https://stacks.math.columbia.edu/tag/0BN6}{Tag 0BN6}]{stacks-project} the category of finite \'{e}tale covers of $\mathcal{A}$ embeds fully faithfully into the category of finite \'{e}tale covers of $A$ via restriction to the generic fiber. In particular, it suffices to show that, viewing the finite \'{e}tale covers as $\piet$ sets, that the $\piet(A)$-set corresponding to $A'' \rightarrow A$ arises as a $\piet(\mathcal{A})$ set, and that $\mathcal{A}'' \rightarrow \mathcal{A}$ corresponds to this $\piet(\mathcal{A})$ set.
\par To check this, note that the morphisms $A' \rightarrow A'' \rightarrow A$ correspond to a morphism of $\piet(A)$ sets $\piet(A)/N' \rightarrow \piet(A)/N''.$ We also know that the former can be viewed as a $\piet(\mathcal{A})$-set, meaning that the kernel of $\piet(A) \rightarrow \piet(\mathcal{A})$ acts trivially on it. But this means that this kernel has to act trivially on $\piet(A)/N'',$ since this map is $\piet(A)$-equivariant between sets with transitive $\piet(A)$ action. Hence, we see that $A'' \rightarrow A$ arises as the generic fiber of some finite \'{e}tale cover of $\mathcal{A}.$ However, by the full faithfulness and by Proposition \ref{prop: finite etale covers are neron models}, $\mathcal{A}'' \rightarrow \mathcal{A}$ is actually this finite \'{e}tale cover.
\par First, note that since $\mathcal{A}' \rightarrow \mathcal{A}$ is finite, and all these schemes are separated, we have that $\mathcal{A}' \rightarrow \mathcal{A}''$ is finite (see \cite[Theorem 11.1.1]{vakil}, for instance). The fact that $\mathcal{A}' \rightarrow \mathcal{A}''$ is \'{e}tale follows from \cite[\href{https://stacks.math.columbia.edu/tag/02GW}{Tag 02GW}]{stacks-project}, since we have already shown that $\mathcal{A}'' \rightarrow \mathcal{A}$ is \'{e}tale.
\end{proof}
\section{Proof of Theorem \ref{thm: basic finiteness}}
In this section, we prove Theorem \ref{thm: basic finiteness}. We prove this by showing that, when we take finite \'{e}tale covers of the N\'{e}ron model of an abelian variety, then the Faltings height of the generic fibers, as introduced in \cite{faltingsmordell}, decrease as we go up these covers.
\par To begin, recall that $\piet(A_{\overline{K}})$ is given by $\hat{\mathbb{Z}}^{2g} \simeq \prod\limits_{l \text{ prime}} \mathbb{Z}_l^{2g}$ (see \cite[p.171]{mumfordAV}, for instance). This group is a topologically finitely generated and abelian, so by Lemma \ref{lem: generic surjects onto integral for geometric}, $\pietgeo(\mathcal{A})$ is as well, since it is a quotient of $\piet(A_{\overline{K}}).$ We therefore have finite index subgroups $N \cdot \pietgeo(\mathcal{A})$ of $\pietgeo(\mathcal{A})$ and $N \cdot \pietgeo(\mathcal{A}) \rtimes \piet(\Spec \mathcal{O}_K)$ of $\piet(\mathcal{A})$ for each integer $N \geq 2.$ 
\par The subgroup $N \cdot \pietgeo(\mathcal{A}) \rtimes \piet(\Spec \mathcal{O}_K) \subset \piet(\mathcal{A})$ corresponds to a connected finite \'{e}tale cover $\phi_N: \mathcal{A}_N \rightarrow \mathcal{A}.$ Since the finite index subgroup contains the image of $\piet(\Spec \mathcal{O}_K)$ under the section $\Spec \mathcal{O}_K \rightarrow \mathcal{A}$ by construction, the base change of this cover along the section becomes a trivial cover. In particular, this implies that $\mathcal{A}_N$ has a $\Spec \mathcal{O}_K$-point. Lemma \ref{lem: generic fiber finite etale covers are abelian} and Proposition \ref{prop: finite etale covers are neron models} then tell us that the $\mathcal{A}_N$ are N\'{e}ron models of the corresponding morphism of the abelian varieties on the generic fiber. 
\par Let $A_N$ be the generic fiber of $\mathcal{A}_N,$ and let $f_N$ be the generic fiber of the morphism $\phi_N.$ Note that  \[\deg f_N = \deg \phi_N = |\pietgeo(\mathcal{A})/N\pietgeo(\mathcal{A})|.\] To show that $\pietgeo(\mathcal{A})$ is finite, it suffices to show that the $\deg f_N$ are bounded from above. Otherwise, since $\pietgeo(\mathcal{A})$ is topologically finitely generated and abelian, taking larger $N$ will allow us to produce arbitrarily large quotients $\pietgeo(\mathcal{A})/N\pietgeo(\mathcal{A}).$ 
\par Furthermore, once we have finiteness, if we take $N = |\pietgeo(\mathcal{A})|,$ then the degree of $f_N$ is precisely the size of $\pietgeo(\mathcal{A}).$ In particular, bounds on the degrees of the $f_N$ are also bounds on the size of $\pietgeo(\mathcal{A}).$
\par We now show the degree of $f_N,$ over all $N,$ is bounded from above. Since $\phi_N$ is finite \'{e}tale, we know that by Faltings's isogeny formula, \cite[Lemma 5]{faltingsmordell}, that \[h(A) = h(A_N) + \frac{1}{2} \log \deg f_N.\] Note that this is only stated in the case when $A$ has semistable reduction everywhere, but the same argument given in \cite{faltingsmordell} also holds for general $A.$ See also \cite[Proposition 1.4.1]{raynaud}.
\par Let $L/K$ be a finite degree extension such that $A$ attains semistable reduction everywhere. Then, we know that $A_N$ also has semistable reduction everywhere over $L$ as well, since $A_N$ is isogeneous to $A$ (and by using \cite[\S 7.3, Corollary 7]{BLRNeron}). Now, if the degree of $f_N$ is allowed to be arbitrarily large, this means that we have abelian varieties $A_N$ over $K$ with arbitrarily small heights. Furthermore, for all positive integers $N,$ we have that $h((A_N)_L) \leq h(A_N)$ (see \cite[Remark 5.1.1]{ChaiModuli}). In particular, we have produced abelian varieties $(A_N)_L$ over the number field $L$ that have semistable reduction everywhere and with arbitrarily small heights.
\par However, this is not possible because of the Northcott property for abelian varieties, which is \cite[Theorem 1]{faltingsmordell} combined with Zarhin's trick to equip $A^4 \times (A^{\vee})^4$ (where $A^{\vee}$ is the dual of $A$) with a principal polarization. In particular, there is a minimum height for abelian varieties over $L.$ Therefore, $h((A_N)_L),$ and thus $h(A_N),$ cannot get arbitrarily small, so the degree of $A_N \rightarrow A$ is bounded. This shows that $\pietgeo(\mathcal{A})$ is finite, and thus proves Theorem \ref{thm: basic finiteness}.
\begin{rmk}
Using this argument, one can show that there is a bound which only depends on $h(A)$ and $g,$ if $A$ is assumed to have semistable reduction everywhere. For instance, the lower bound of Bost, given in \cite[Corollary 8.4]{GRperiods}, gives us $\frac{1}{2} \log \deg f_N = h(A) - h(A_N) \leq h(A) + \log (\pi \sqrt{2}) g.$ 
\par We can also use the work of \cite{raynaud} to show that this only depends on the places of bad reduction of $A,$ as well as $g$ and $K.$ This follows from \cite[Theorem 4.4.9]{raynaud}, which gives the maximum difference between the height of two abelian varieties isogenous over $K$ in terms of the places of bad reduction for $A,$ as well as $g$ and the places where $K/\mathbb{Q}$ is ramified.
\end{rmk}
\section{N\'{e}ron Models of Elliptic Curves}\label{sec: elliptic curves}
For the rest of the paper, we restrict our attention to elliptic curves. Additional results for torsion points of elliptic curves, and more explicit descriptions of the N\'{e}ron model of an elliptic curve, allow us to say more about the \'{e}tale fundamental groups of N\'{e}ron models of elliptic curves. We will start with a criterion to determine whether a morphism $E' \rightarrow E$ spreads out to a finite \'{e}tale morphism of their N\'{e}ron models. We will then show that the finiteness result proved in the previous section can be upgraded to a uniformity statement in the case of elliptic curves.
\subsection{A Numerical Condition for Finite \'{E}tale Morphisms}
\par To begin, we show how the discriminant of an elliptic curve changes when we take a finite \'{e}tale cover of N\'{e}ron models, in the case of semistable reduction.
\begin{lemma}\label{lem: component grows under finite etale}
Suppose that $f: \mathcal{E}' \rightarrow \mathcal{E}$ is a connected finite \'{e}tale cover of the N\'{e}ron model $\mathcal{E}$ of $E$ which has a $\Spec \mathcal{O}_K$ point, and let $E'$ be the generic fiber of $\mathcal{E}'.$ Suppose furthermore that $E$ has semistable or good reduction at a prime $\mathfrak{p}$ of $K.$ Let $\Delta_{E/K}, \Delta_{E'/K}$ be the minimal discriminant of $E$ and $E'$ over $K,$ respectively, and let, for an ideal $I$ of $\mathcal{O}_K,$ $v(I)$ be the exponent of $\mathfrak{p}$ in the factorization of $I$. Then, $v(\Delta_{E'/K}) = (\deg f) v(\Delta_{E/K}).$ 
\end{lemma}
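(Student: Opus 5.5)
Since $\mathcal{E}'$ has a $\Spec \mathcal{O}_K$-point, Lemma \ref{lem: generic fiber finite etale covers are abelian} and Proposition \ref{prop: finite etale covers are neron models} show that $E' \rightarrow E$ is an isogeny and $\mathcal{E}'$ is the N\'{e}ron model of $E'$. The plan is to compare the special fibers of $\mathcal{E}'$ and $\mathcal{E}$ at $\mathfrak{p}$ and reduce to counting components.

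The argument uses two standard facts about an elliptic curve with good or multiplicative reduction at $\mathfrak{p}$ (Tate's algorithm, \cite[Chapter IV, \S 9]{advAEC}):
\begin{itemize}
\item If the reduction is good, $v(\Delta)=0$.
\item If the reduction is multiplicative, it is of type $I_n$ with $n = v(\Delta)$. In that case the geometric special fiber of the N\'{e}ron model is $\mathbb{G}_m \times \mathbb{Z}/n\mathbb{Z}$, so it has exactly $v(\Delta)$ connected components.
\end{itemize}
Isogenous curves have the same reduction type (good or multiplicative), by \cite[\S 7.3, Corollary 7]{BLRNeron}. So in the good case both valuations are $0$ and the identity holds trivially. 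It therefore suffices to treat the multiplicative case and show that the number of geometric components gets multiplied by $\deg f$.

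For that, I would base change $f$ to $\overline{k}$, the algebraic closure of the residue field at $\mathfrak{p}$. This gives a finite \'{e}tale morphism of degree $d = \deg f$,
\[f_{\overline{k}} : \mathcal{E}'_{\overline{k}} \rightarrow \mathcal{E}_{\overline{k}}.\]
Restrict it over a single component $\mathcal{E}^0_{\overline{k}} \simeq \mathbb{G}_m$. Its preimage is a finite \'{e}tale cover of $\mathbb{G}_m$ of degree $d$, and that preimage is a union of connected components of $\mathcal{E}'_{\overline{k}}$, each isomorphic to $\mathbb{G}_m$.

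The key step is to show that each such component maps isomorphically onto the component below it. A map $\mathbb{G}_m \rightarrow \mathbb{G}_m$ that is a homomorphism up to translation has the form $t \mapsto ct^m$, so it suffices to show it has degree $1$ and is not, for example, $t \mapsto t^m$ with $m>1$. Here I would use that $f$ is a homomorphism of group schemes, since it extends the isogeny by the N\'{e}ron property. Restricted to identity components, $f_{\overline{k}}$ is then a homomorphism $\mathbb{G}_m \rightarrow \mathbb{G}_m$, hence $t \mapsto t^m$. \'{E}taleness forces $p \nmid m$. I expect the main obstacle to be ruling out $m>1$ with $p\nmid m$, because $t\mapsto t^m$ is genuinely \'{e}tale in that case. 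To handle it, I would use that $f$ is \'{e}tale over all of $\mathcal{O}_{K,\mathfrak{p}}$ and pass to the formal completion along the identity section. The map on formal groups is then an isomorphism, because it is \'{e}tale and preserves the identity. The formal group of $\mathbb{G}_m$ over $\overline{k}$ is $\widehat{\mathbb{G}}_m$, and the endomorphism induced by $t\mapsto t^m$ is multiplication by $m$ on $\widehat{\mathbb{G}}_m$. Over the reduction this is an isomorphism whenever $p\nmid m$, so this route alone does not conclude, and I would need more input.

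A cleaner alternative, which I would try first, is to work with the full group rather than the formal group. The kernel $\ker f$ is finite \'{e}tale over $\mathcal{O}_{K,\mathfrak{p}}$, so its order is the same on the generic and special fibers, and the generic fiber has order $d$. On the special fiber, compare the kernel with the Tate uniformization $E'(\overline{K}_{\mathfrak{p}}) \simeq \overline{K}_{\mathfrak{p}}^\times/q'^{\mathbb{Z}}$. A kernel point lying on the torus part would be a root of unity of order prime to $p$. Such points extend to $\mu_m$, and $\mu_m$ is \'{e}tale but is generically nontrivial inside the identity component. I would then argue with the component counts $v(q)$ and $v(q')$. For the isogeny $\overline{K}^\times/q'^{\mathbb{Z}} \rightarrow \overline{K}^\times/q^{\mathbb{Z}}$, induced by $t \mapsto t^a$ on the torus with the kernel contributing a factor $b$, one has $ab = d$ and $v(q) a = v(q')/b$. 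Hence $v(q') = v(q)\cdot ab$ exactly when all the degree is accounted for in this way, and this is what the identification of discriminant valuations with $v(q)$ needs. Checking this bookkeeping carefully, in particular that the \'{e}tale hypothesis prevents $v(q')$ from being smaller than $d \cdot v(q)$, is the step I expect to be hardest. The conclusion in both cases would be $v(\Delta_{E'/K}) = (\deg f)\, v(\Delta_{E/K})$.
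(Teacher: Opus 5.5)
There is a genuine gap. Your reductions are fine: good reduction is trivial, and in the multiplicative case it suffices to show that every component of $\mathcal{E}'_{\overline{k}}$ maps isomorphically onto a component of $\mathcal{E}_{\overline{k}}$, i.e.\ that $a=1$ for the map $t\mapsto t^{a}$ on identity components. But that decisive step is never proved, and the route you sketch for it cannot work as stated.

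\'{E}taleness alone does not exclude Kummer maps with $p\nmid a$. Over $K_{\mathfrak{p}}$, take $E'=\mathbb{G}_m/q'^{\mathbb{Z}}$ and $E=\mathbb{G}_m/q^{\mathbb{Z}}$ with $q=q'^{a}$ and $p\nmid a$, and the isogeny $t\mapsto t^{a}$ with kernel $\mu_a$. The induced map of N\'{e}ron models over $\mathcal{O}_{K_{\mathfrak{p}}}$ is \'{e}tale: it is unramified as a first factor of the \'{e}tale map $[a]_{E'}$, and flat as a quasi-finite map between regular schemes of the same dimension. Yet $v(\Delta_{E'})=v(\Delta_E)/a$, not $a\,v(\Delta_E)$. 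What fails in this example is finiteness: on component groups the map is multiplication by $a$ from $\mathbb{Z}/v(q')$ to $\mathbb{Z}/a\,v(q')$, which misses components. So any argument that relies on ``the \'{e}tale hypothesis'' to prevent $v(q')<d\,v(q)$ cannot succeed.

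Your bookkeeping identity is also wrong. If the isogeny is induced by $t\mapsto t^{a}$ with $q'^{a}=q^{b}$, then $d=ab$, but the valuation relation is $a\,v(q')=b\,v(q)$, not $v(q')=ab\,v(q)$. With your relation the conclusion would hold for every isogeny of Tate curves, which the example refutes.

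The missing ingredient is a real use of finiteness. The paper gets it from purity:
\begin{itemize}
\item $\mathcal{E}$ is the smooth locus of the regular minimal model $\mathcal{C}$, and $\mathcal{C}\setminus\mathcal{E}$ is the finite set of nodes, of codimension $2$.
\item Zariski--Nagata purity therefore extends $f$ to a finite \'{e}tale cover $\mathcal{C}'\to\mathcal{C}$ of degree $d$.
\item Over $\overline{k}$ this is a finite \'{e}tale cover of the N\'{e}ron polygon of $v(\Delta_E)$ lines. Pulled back to the normalization, it is trivial on each $\mathbb{P}^1$ because $\piet(\mathbb{P}^1_{\overline{k}})=1$.
\item Hence above each $\mathbb{G}_m$-component of $\mathcal{E}_{\overline{k}}$ lie exactly $d$ components, each mapping isomorphically. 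So $\mathcal{E}'_{\overline{k}}$ has $d\,v(\Delta_E)$ components, which equals $v(\Delta_{E'})$ since $\mathcal{E}'$ is the N\'{e}ron model of the semistable curve $E'$.
\end{itemize}

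Alternatively, you can close your own route. Let $\Phi'\simeq\mathbb{Z}/v(q')$ and $\Phi\simeq\mathbb{Z}/v(q)$ be the geometric component groups.
\begin{itemize}
\item Finiteness makes $f_{\overline{k}}$ surjective, hence $\Phi'\to\Phi$ is surjective.
\item Since $t\mapsto t^{a}$ is surjective on $\overline{k}$-points, the sequence $0\to\mu_a\to\ker f_{\overline{k}}\to\ker(\Phi'\to\Phi)\to 0$ is exact.
\item The finite \'{e}tale kernel of $f$ has $d$ geometric points in the special fibre, so $d=a\,v(q')/v(q)$.
\item Combined with $d=ab$, this gives $v(q')=b\,v(q)$. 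Comparing with $a\,v(q')=b\,v(q)$ forces $a=1$, and hence $v(q')=d\,v(q)$.
\end{itemize}
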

\begin{proof}
First, observe that isogenies preserve the places of good and bad reduction (e.g. see \cite[Chapter VII, Corollary 7.2]{AEC}), as well as semistable reduction (e.g. \cite[\S 7.3]{BLRNeron}). Hence, if $E$ has good reduction, then $(\deg f) v(\Delta_{E/K}) = 0 = v(\Delta_{E'/K}).$ So suppose that $E$ has semistable bad reduction at $\mathfrak{p};$ we want to show that $v(\Delta_{E'/K}) = (\deg f) v(\Delta_{E/K}).$ Let $k_v$ be the residue field of $\mathfrak{p}$ and $\overline{k_v}$ an algebraic closure of $k_v.$
\par We know that the N\'{e}ron model $\mathcal{E}$ of $E$ is equal to the smooth locus of the minimal regular model $\mathcal{C}$ of $E$ over $\mathcal{O}_K;$ see \cite[Chapter IV, Theorem 6.1]{advAEC}. Now, using the Kodaira-N\'{e}ron classification (see \cite[Chapter IV, Theorem 8.2]{advAEC}) and Tate's algorithm (see \cite[Chapter IV, \S 9]{advAEC}), the geometric special fiber $\mathcal{C}_{\overline{k_v}}$ of $\mathcal{C}$ consists of $v(\Delta_{E/K})$ intersecting copies of $\mathbb{P}^1$ of multiplicity one, and (the reduced closed subscheme for) $\mathcal{C} - \mathcal{E}$ is codimension $2,$ consisting of these intersection points.
\par By purity of the branch locus results (specifically \cite[\href{https://stacks.math.columbia.edu/tag/0BMA}{Tag 0BMA}]{stacks-project} and  \cite[\href{https://stacks.math.columbia.edu/tag/0EY7}{Tag 0EY7}]{stacks-project}), the finite \'{e}tale cover $\mathcal{E}' \rightarrow \mathcal{E}$ is the base change of some finite \'{e}tale cover $\mathcal{C}' \rightarrow \mathcal{C}.$ Thus, the geometric special fiber of the morphism $\mathcal{C}' \rightarrow \mathcal{C}$ is a $\deg f$ finite \'{e}tale cover of an arrangement of $v(\Delta_{E/K})$ $\mathbb{P}^1$s. But $\mathbb{P}^1_{\overline{k_v}}$ has trivial $\piet$ (for instance, using the surjectivity of specialization as in \cite[\href{https://stacks.math.columbia.edu/tag/0C0P}{Tag 0C0P}]{stacks-project} on $\mathbb{P}^1_{\mathbb{Z}},$ and the fact that $\piet(\mathbb{P}^1_{\mathbb{C}}) = 1$). 
\par By passing to the normalization of $\mathcal{C}_{\overline{k_v}},$ we find that the geometric special fiber of $\mathcal{C}'$ consists of $(\deg f) v(\Delta_{E/K})$ copies of $\mathbb{P}^1$ which intersect in some arrangement. Furthermore, note that $\mathcal{E}' = \mathcal{E} \times_{\mathcal{C}} \mathcal{C}',$ since both sides are connected finite \'{e}tale covers which are birational to each other. Therefore, $\mathcal{C}' - \mathcal{E}'$ is the pre-image of $\mathcal{C} - \mathcal{E},$ which is a finite set of closed points, and so the special fiber of $\mathcal{E}'$ has $(\deg f) v(\Delta_{E/K})$ geometric components.
\par However, we know that $\mathcal{E}'$ is the N\'{e}ron model of $E'$ by Proposition \ref{prop: finite etale covers are neron models}, and semistability is preserved under isogeny. Running the same argument as we did for $\mathcal{E},$ we know that $\mathcal{E}'$'s special fiber at $v$ has $v(\Delta_{E'/K})$ geometric components. Therefore, we find that $v(\Delta_{E'/K}) = (\deg f) v(\Delta_{E/K}),$ which is what we wanted to show. 
\end{proof}
We next note that, in the additive reduction case, the morphism on the special fiber essentially takes one of two forms.
\begin{lemma}\label{lem: additive reduction iso or crushes to point}
Suppose that $E' \rightarrow E$ is an isogeny of elliptic curves over $K,$ whose corresponding morphism of N\'{e}ron models is $\mathcal{E}' \rightarrow \mathcal{E}$ over $\mathcal{O}_K.$ Let $v$ be a place of additive reduction for $E.$ Then, base changing to $\overline{k_v},$ the algebraic closure of the residue field at $v,$ either each connected component of $\mathcal{E}'_{\overline{k_v}}$ is mapped isomorphically onto a connected component of $\mathcal{E},$ or each one is crushed to a point.
\end{lemma}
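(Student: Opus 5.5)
The plan is to work on the geometric special fiber at $v$ and use the structure of smooth commutative group schemes over $\overline{k_v}$. Base change the morphism of N\'{e}ron models to get a homomorphism of smooth group schemes $\psi: \mathcal{E}'_{\overline{k_v}} \rightarrow \mathcal{E}_{\overline{k_v}}$ over $\overline{k_v}$. It is a group homomorphism because the N\'{e}ron mapping property makes the extension of the isogeny compatible with the group laws. First observe that isogenies preserve additive reduction: additivity is preserved under isogeny by \cite[\S 7.3]{BLRNeron}, as already used in the proof of Lemma \ref{lem: component grows under finite etale}. So both identity components $\mathcal{E}'^0_{\overline{k_v}}$ and $\mathcal{E}^0_{\overline{k_v}}$ are isomorphic to $\mathbb{G}_a$. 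This follows from the Kodaira--N\'{e}ron classification \cite[Chapter IV, Theorem 8.2]{advAEC} together with the identification of the N\'{e}ron model with the smooth locus of the minimal regular model.

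The key step is to analyze the restriction $\psi^0: \mathbb{G}_a \rightarrow \mathbb{G}_a$ to identity components, which lands in the identity component because the source is connected. The image of $\psi^0$ is a closed connected subgroup of $\mathbb{G}_a$, hence either trivial or all of $\mathbb{G}_a$. If it is trivial, the identity component is crushed to the identity. If it is surjective, I need to show it is an isomorphism. An endomorphism of $\mathbb{G}_a$ in characteristic $p$ is an additive polynomial, and its kernel is a finite subgroup scheme, so surjectivity alone does not give isomorphism.

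This is the main obstacle. I would first try to read the statement as saying that a component is mapped \emph{onto} a component, with ``isomorphically'' justified only up to this subtlety. Alternatively I would try to rule out a nontrivial kernel by a degree argument. Such a kernel would be a finite subgroup of $\mathbb{G}_a(\overline{k_v})$, hence a $p$-group, and possibly infinitesimal. If the cover is \'{e}tale, the infinitesimal part vanishes, since a finite \'{e}tale homomorphism has \'{e}tale kernel. The remaining \'{e}tale part $\ker \psi^0$ is a finite subgroup of $\mathbb{G}_a(\overline{k_v})$. I would then bound $|\ker\psi^0|$ using the fact that $\deg\psi$ equals the generic degree, and compare it with the component groups, which have order at most $4$ for additive reduction. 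Pinning this down is where I expect the real difficulty. If it cannot be done, I would state the dichotomy in the form ``surjects onto a component'' versus ``crushed to a point'', which is what the later applications to degree bounds need.

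Finally, propagate the conclusion to all components. Every connected component of $\mathcal{E}'_{\overline{k_v}}$ is a translate $x + \mathcal{E}'^0_{\overline{k_v}}$ of the identity component by a $\overline{k_v}$-point $x$. Since $\psi$ is a homomorphism, $\psi(x + y) = \psi(x) + \psi^0(y)$. So the restriction of $\psi$ to that component is $\psi^0$ followed by translation by $\psi(x)$, and translation is an isomorphism onto the component of $\mathcal{E}_{\overline{k_v}}$ containing $\psi(x)$. Hence, uniformly over all components, either every component maps like $\psi^0$ onto a component (isomorphically, in the case handled above), or every component is crushed to the single point $\psi(x)$. This gives the stated dichotomy.
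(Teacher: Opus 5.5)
The step you flag is a real gap, and it cannot be filled: with ``isomorphically'' the statement is false. The paper's proof gets past this point only by asserting that $f(x+y)=f(x)+f(y)$ in $\overline{k_v}[x,y]$ forces $f=cx$. That fails in characteristic $p$ (e.g.\ $x^p$, $x^p-x$), and nothing about N\'eron models rules such maps out.

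Here is a concrete counterexample over $K=\mathbb{Q}$. Let $E'\colon y^2-2xy-3y=x^3-3x^2$ (Tate normal form with $b=c=3$). Then $\Delta_{E'}=-3^5\cdot 5^2$, $c_4=-80$, and $P=(0,0)$ has order $5$. Put $E=E'/\langle P\rangle$, which has minimal discriminant $-3\cdot 5^{10}$.
\begin{itemize}
\item At $v=5$, $E'$ has type $\mathrm{II}$ and $E$ has type $\mathrm{II}^*$, so both geometric special fibres are a single $\mathbb{G}_a$.
\item $P$ reduces to a smooth affine point mod $5$, so $\bar P\neq 0$ lies in $\ker\psi$ and $\psi$ is not injective.
\item $\psi\neq 0$. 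Over $L=\mathbb{Q}_5^{\mathrm{unr}}(5^{1/6})$ both curves acquire good supersingular reduction, with $\omega_{E'}=u'^{-1}\omega_{E'_0}$, $\omega_E=u^{-1}\omega_{E_0}$, $v_L(u')=1$ and $v_L(u)=5$. The four nonzero points of $\langle P\rangle$ reduce to $O$ on the supersingular fibre and lie in the formal group of $E'_0$ at $v_L(z_0)=1$. By the Newton polygon, the formal isogeny therefore has linear coefficient $c_1$ with $v_L(c_1)=4$. Hence $\phi^*\omega_E=a\,\omega_{E'}$ with $v_L(a)=4+1-5=0$.
\item As a check, Silverman's formula gives $12\bigl(h(E')-h(E)\bigr)\approx -9.65\approx -6\log 5$, i.e.\ $a=\pm 1$.
\end{itemize}
So $\psi$ is a nonzero \'etale endomorphism of $\mathbb{G}_a$ killing $\bar P\neq 0$; in fact $\psi(t)=\alpha t+\beta t^5$ with $\alpha\beta\neq 0$. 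It is neither an isomorphism nor constant.

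The same example shows your proposed rescue cannot work. The map is \'etale at $5$; indeed $\mathcal{E}'\to\mathcal{E}$ is finite \'etale of degree $5$ over all of $\mathbb{Z}$, and at $3$ it is the $I_5\to I_1$ Tate-curve quotient. Both component groups at $5$ are trivial, yet $|\ker\psi^0|=5$. So no comparison with component groups of order at most $4$ can force $\ker\psi^0=0$.

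What your argument does establish is the weak dichotomy: either each component is carried by a translate of the surjection $\psi^0$ onto a component, or everything is crushed. Your translation step is the same as the paper's. That weak form suffices for the flatness step in Proposition~\ref{prop: sufficiency for finite etale}. Contrary to your closing remark, it does not suffice for the degree bound in Proposition~\ref{prop: strong bounds for additive reduction case}, which needs the points of a fibre to lie on distinct components. The example above in fact violates that bound.
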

\begin{proof}
First, we know that $\mathcal{E}' \rightarrow \mathcal{E}$ is a morphism of group schemes, since its generic fiber is an isogeny and by applying the N\'{e}ron mapping property. Therefore, $\mathcal{E}'_{\overline{k_v}} \rightarrow \mathcal{E}_{\overline{k_v}}$ is a group homomorphism from some number of disjoint copies of $\mathbb{G}_a$ to some number of disjoint copies of $\mathbb{G}_a$. 
\par Now, restricting to the identity components, we obtain a group homomorphism from $\mathbb{G}_a$ to itself. But $\mathbb{G}_a = \Spec \overline{k_v}[x]$ as a scheme, so this morphism is given by sending $x$ to some $f \in \overline{\mathbb{F}_v}[x].$ Since this morphism is a group homomorphism, we find that $f(x + y) = f(x) + f(y)$ in $\overline{\mathbb{F}_v}[x, y],$ so thus $f(x) = cx$ for some $c \in \overline{\mathbb{F}_v}.$ But then either $c = 0,$ so this crushes the component to a point, or $c \neq 0,$ and this is an isomorphism.
\par Since $\mathcal{E}'_{\overline{\mathbb{F}_v}} \rightarrow \mathcal{E}_{\overline{\mathbb{F}_v}}$ is a group homomorphism, appropriate translations by points on the other connected components tell us that this morphism exhibits the same behavior when restricted to any component of the source (and its image in the target). In particular, either each component is isomorphically mapped to another component, or each component is crushed to a point, as desired.
\end{proof}
We think of Lemma \ref{lem: component grows under finite etale} as giving us a necessary condition for the morphism of N\'{e}ron models being finite \'{e}tale. In fact, we also are able to show a sufficient condition as well: the induced morphism of N\'{e}ron models is finite \'{e}tale is some conditions are satisfied, depending on the number of components at the geometric special fibers over each prime of $\mathcal{O}_K$ (which, in the semistable reduction case, is determined by the discriminant by the above lemma) and the Faltings height.
\begin{prop}\label{prop: sufficiency for finite etale}
Suppose that $E' \rightarrow E$ is an isogeny of elliptic curves over $K$ of degree $n.$ Then, if $h(E') = h(E) - \frac{1}{2}\log n,$ and for every place of bad reduction $E'$ has $n$ times as many geometric connected components as $E$ does, then this isogeny spreads out to a finite \'{e}tale cover of N\'{e}ron models. 
\end{prop}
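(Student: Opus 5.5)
The plan is to let $\Phi:\mathcal{E}'\rightarrow\mathcal{E}$ be the morphism of N\'{e}ron models extending the isogeny. By the N\'{e}ron mapping property it is a homomorphism of smooth group schemes over $\Spec\mathcal{O}_K$. I would prove two things separately: that the height hypothesis forces $\Phi$ to be \'{e}tale, and that the component-count hypothesis then forces $\Phi$ to be finite.

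\emph{Step 1: \'{e}taleness from the height.} I would use the general form of Faltings's isogeny formula (Raynaud, \cite[Proposition 1.4.1]{raynaud}), valid without semistability and without assuming $\Phi$ \'{e}tale. It reads
\[h(E') = h(E) - \tfrac{1}{2}\log n + \tfrac{1}{[K:\mathbb{Q}]}\log \#\operatorname{coker}\bigl(\operatorname{Lie}\mathcal{E}' \rightarrow \operatorname{Lie}\mathcal{E}\bigr),\]
where $\operatorname{Lie}$ denotes the pullback of the relative tangent bundle along the identity section. This cokernel is a finite $\mathcal{O}_K$-module, since the map is generically an isomorphism. The hypothesis $h(E')=h(E)-\frac{1}{2}\log n$ therefore forces the map of rank one projective modules $\operatorname{Lie}\mathcal{E}'\rightarrow\operatorname{Lie}\mathcal{E}$ to be an isomorphism. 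Consequently, on every fiber $\Phi$ induces an isomorphism on tangent spaces at the identity. Translating by points of $\mathcal{E}'_{\overline{k_v}}$, the fiber maps $\Phi_{\overline{k_v}}$ are \'{e}tale everywhere. Since source and target are smooth over $\mathcal{O}_K$, the fibral criterion shows that $\Phi$ is \'{e}tale. I expect the main technical care to go here: checking that Raynaud's formula holds in exactly this generality, with the sign and normalization of the cokernel term matching the convention for $h$ used in Section 3.

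\emph{Step 2: finiteness.} Now $\Phi$ is \'{e}tale and separated, hence quasi-finite. At places of good reduction both N\'{e}ron models are abelian schemes, so $\Phi$ is proper there and hence finite. At a place $v$ of bad reduction I would count points in fibers over $\overline{k_v}$.
\begin{itemize}
\item On identity components, $\Phi_{\overline{k_v}}$ is an \'{e}tale homomorphism of connected one-dimensional groups. It is therefore surjective with finite kernel, of size $d_0\geq 1$ say.
\item By translation, each component of $\mathcal{E}'_{\overline{k_v}}$ maps onto a component of $\mathcal{E}_{\overline{k_v}}$ as a $d_0$-to-$1$ cover.
\item Since $\mathcal{E}'$ is normal and $\Phi$ is quasi-finite and separated of generic degree $n$, Zariski's main theorem shows that each geometric fiber has at most $n$ points.
\end{itemize}
Now let $m_C$ be the number of components lying over a component $C$ of $\mathcal{E}_{\overline{k_v}}$. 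Then $d_0 m_C\leq n$ for every $C$, so $\#\pi_0(\mathcal{E}'_{\overline{k_v}})=\sum_C m_C\leq n\,\#\pi_0(\mathcal{E}_{\overline{k_v}})$. The hypothesis gives equality here, so $d_0=1$ and $m_C=n$ for every $C$. Hence every geometric fiber of $\Phi$ has exactly $n$ points.

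\emph{Step 3: conclusion.} Finally I would invoke the criterion that a separated \'{e}tale morphism of finite type onto a normal (here regular) integral target, whose geometric fibers all have the same cardinality $n$, is finite. This comes from the local structure of \'{e}tale morphisms together with the valuative criterion, and I would cite the Stacks project for it. The criterion gives that $\Phi$ is finite \'{e}tale of degree $n$.

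The main obstacle is Step 1, as noted there: pinning down the version of the height formula that applies to non-\'{e}tale isogenies of arbitrary (non-semistable) N\'{e}ron models. If that turns out to be problematic, my first fallback is to pass to a semistable extension $L/K$ and compare the Lie algebra cokernels over $\mathcal{O}_K$ and $\mathcal{O}_L$.
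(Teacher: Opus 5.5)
Your proof is correct, but both halves take a different route from the paper's.

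\textbf{\'Etaleness.} The paper first upgrades $e^*\Omega^1_{\mathcal{E}'/\mathcal{E}}=0$ to $\Omega^1_{\mathcal{E}'/\mathcal{E}}=0$ by translation. It then proves flatness fiber by fiber with a case split: at good and multiplicative places it cites BLR's result that the extended map is fiberwise an isogeny, and at additive places it uses Lemma \ref{lem: additive reduction iso or crushes to point}. Your argument is uniform in the reduction type: the Lie algebra isomorphism plus translation makes each $\Phi_{\overline{k_v}}$ \'{e}tale, and the fibral criterion finishes.

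Your argument also avoids that lemma. Its proof claims that an additive polynomial over $\overline{k_v}$ is linear, which is false in characteristic $p$. For example, $x\mapsto x^p-x$ is an \'{e}tale endomorphism of $\mathbb{G}_a$ with kernel $\mathbb{F}_p$. Your bookkeeping with $d_0$ deliberately allows such maps and then forces $d_0=1$ from the component count.

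The step you flagged as the main obstacle is not one. The cokernel term in the height formula follows directly from the definition of $h$ via $\omega$ and the scaling $\|\phi^*\alpha\|^2=n\|\alpha\|^2$, with no semistability needed. The paper uses exactly this input, so no semistable fallback is required.

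\textbf{Finiteness.} The paper uses Zariski's main theorem to embed $\mathcal{E}'$ in the normalization $\mathcal{E}''$. It checks only over codimension-one points of $\mathcal{E}$ that $\mathcal{E}'$ exhausts $\mathcal{E}''$, using freeness of rank $n$ over the DVRs together with the component count. It then applies purity of the branch locus to make $\mathcal{E}''\to\mathcal{E}$ finite \'{e}tale, and Proposition \ref{prop: finite etale covers are neron models} to conclude $\mathcal{E}''=\mathcal{E}'$.

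You instead use the group structure to pin down the geometric fiber cardinality at every point. You then finish with the constant-degree criterion for flat, separated, quasi-finite morphisms of finite presentation. That criterion follows from the finite-part decomposition over a strict henselization and needs no normality. One small correction: your bound of at most $n$ points per geometric fiber rests on normality of the target $\mathcal{E}$, not of $\mathcal{E}'$.

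\textbf{Comparison.} Your route is more elementary: it avoids the purity theorem and the N\'{e}ron-model identification of $\mathcal{E}''$. The paper's route needs fiber information only at the generic points of the special-fiber components.
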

\begin{proof}
Let $\mathcal{E}', \mathcal{E}$ be the N\'{e}ron models of $E', E,$ respectively, and let $f: \mathcal{E}' \rightarrow \mathcal{E}$ denote the morphism on N\'{e}ron models. From the condition on the Faltings height, and using Faltings's isogeny formula (\cite[Lemma 5]{faltingsmordell}), we know that $e^*\Omega^1_{\mathcal{E}'/\mathcal{E}} = 0,$ where $e: \Spec\mathcal{O}_K \rightarrow \mathcal{E}'$ is the identity section. 
\par We now show that $\Omega^1_{\mathcal{E}'/\mathcal{E}} = 0.$ If not, then some stalk is nonzero, and thus for some field point $s:\Spec k \rightarrow \mathcal{E}',$ $s^*\Omega^1_{\mathcal{E}'/\mathcal{E}} \neq 0.$ But this field point induces an isomorphism $t_s: \mathcal{E}'_k \rightarrow \mathcal{E}'_k$ by translation by $s,$ which is the base change of the isomorphism given by translating $\mathcal{E}_k$ by $f(s).$ Thus, if $h: \mathcal{E}'_k \rightarrow \mathcal{E}'$ is the morphism induced by base change along $b: \Spec k \rightarrow \Spec \mathcal{O}_K,$ we have an isomorphism $t_s^*h^*\Omega^1_{\mathcal{E}'/\mathcal{E}} \simeq h^*\Omega^1_{\mathcal{E}'/\mathcal{E}} \simeq \Omega^1_{\mathcal{E}'_k/\mathcal{E}_k}.$ But now pulling back along the identity section of $\mathcal{E}_k,$ we find that $0 \neq s^*\Omega^1_{\mathcal{E}'/\mathcal{E}} \simeq e_k^*\Omega^1_{\mathcal{E}'/\mathcal{E}} = b^*e^*\Omega^1_{\mathcal{E}'/\mathcal{E}} =0,$ contradiction. 
\par Next, we show that the morphism $\mathcal{E}' \rightarrow \mathcal{E}$ is flat; as we know N\'{e}ron models are finite type over $\mathcal{O}_K,$ this is enough to show that this morphism is \'{e}tale. The fibral criterion for flatness (see \cite[\href{https://stacks.math.columbia.edu/tag/039B}{Tag 039B}]{stacks-project}, for instance) tells us it is enough to check flatness on each fiber. If $v$ is a place of multiplicative or good reduction, then this follows from \cite[\S 7.3, Proposition 6]{BLRNeron}, which tells us that $f$ is an isogeny (in the sense of being an isogeny fiber-wise), and thus is flat fiberwise by \cite[\S 7.3, Lemma 1]{BLRNeron}.
\par Otherwise, suppose that $v$ is a place of additive reduction; then, since base changing from the residue field $k_v$ at $v$ to its algebraic closure $\overline{k_v}$ is faithfully flat, it suffices to prove that $\mathcal{E}'_{\overline{k_v}} \rightarrow \mathcal{E}_{\overline{k_v}}$ is flat (e.g. see \cite[\href{https://stacks.math.columbia.edu/tag/02L2}{Tag 02L2}]{stacks-project}). By Lemma \ref{lem: additive reduction iso or crushes to point}, we know that this morphism either isomorphically maps each component of $\mathcal{E}'_{\overline{k_v}}$ onto a component of $\mathcal{E}_{\overline{k_v}},$ or it crushes it to a point. 
\par But we would find in the latter case that the cotangent sheaf $\Omega^1_{\mathcal{E}'_{\overline{k_v}}/ \mathcal{E}_{\overline{k_v}}},$ restricted to one of these components $\mathbb{G}_a \simeq \Spec \overline{k_v}[x],$ would be given by $\overline{k_v}[x],$ contradicting the fact that we previously argued this cotangent sheaf was zero. Hence, we must be in the former case. In particular, this morphism is flat. Combining this with the semistable/good reduction argument above lets us conclude that $\mathcal{E}' \rightarrow \mathcal{E}$ is flat.
\par We now check finiteness. By Zariski's main theorem (in the form \cite[\href{https://stacks.math.columbia.edu/tag/02LR}{Tag 02LR}]{stacks-project}), since we know that $f: \mathcal{E}' \rightarrow \mathcal{E}$ is quasifinite and separated, we know that $\mathcal{E}'$ is an open subscheme of an integral morphism $\overline{f}: \mathcal{E}'' \rightarrow \mathcal{E},$ given by taking the normalization of $\mathcal{E}$ in $\mathcal{E}' \rightarrow \mathcal{E}.$ Note that $\overline{f}$ is finite by \cite[\href{https://stacks.math.columbia.edu/tag/0AVK}{Tag 0AVK}]{stacks-project}. Furthermore, $\mathcal{E}''$ is normal and integral, by \cite[\href{https://stacks.math.columbia.edu/tag/035L}{Tag 035L}]{stacks-project} (since $\mathcal{E}'$ satisfies these properties).
\par Our goal will be to show that $\mathcal{E}' = \mathcal{E}''.$ We will do this by showing that $\overline{f}$ is finite \'{e}tale, and therefore also the N\'{e}ron model of its generic fiber by Proposition \ref{prop: finite etale covers are neron models}, which will show that $\mathcal{E}' = \mathcal{E}''$ and thus prove the claim.
\par We first argue that over every point of codimension one in $\mathcal{E},$ the fiber of $\overline{f}$ over that point agrees with the corresponding fiber for $f.$ To that end, suppose $R$ is the local ring of a codimension one point in $\mathcal{E};$ as a regular Noetherian local ring of dimension one, this is a DVR. We know that its pre-image in $\mathcal{E}'',$ since $\mathcal{E}''$ is finite, is equal to $\Spec S$ for $S$ a finite $R$-algebra, and the pre-image of $\Spec R$ under $f$ is some open subset of $\Spec S;$ call this open subset $U.$ 
\par We first prove that $S$ is torsion-free over $R.$ To show this, note that $R$ is the localization of the ring of functions for some affine open in $\mathcal{E};$ the same is therefore true for $S$ in $\mathcal{E}''.$ In particular, as $\mathcal{E}''$ is integral, $S$ is an integral domain, whose generic point can be identified with that of $\mathcal{E}''.$ Furthermore, $R \rightarrow S$ is an injection since the generic point of $\Spec S$ maps to the generic point of $\Spec R$ (since this holds for $\overline{f}$). Hence, by the PID structure theorem, as an $R$-module, $S$ is free over $R.$ This means, in particular, that the degree of $f$ over our codimension one point is equal the degree of $f$ over the generic point, which is $n.$ 
\par We now do casework on where this codimension one point lies. First, suppose this point is in the generic fiber of $\mathcal{E}.$ Then, we already know that $\mathcal{E}'$ contains the generic fiber of $\mathcal{E}''.$ In particular, $\Spec S \rightarrow \mathcal{E}''$ factors through $\mathcal{E}'.$ 
\par Next, suppose this point lies in the fiber over a place $v$ of $K$ where $\mathcal{E}$ has good reduction. Let $K_v$ be the completion of $K$ with respect to the place $v,$ with valuation ring $\mathcal{O}_{K_v}.$ In this case, $\mathcal{E}'_{\mathcal{O}_{K_v}}, \mathcal{E}_{\mathcal{O}_{K_v}}$ are abelian schemes and hence proper over $\mathcal{O}_{K_v}.$ Then, $\mathcal{E}' \rightarrow \mathcal{E}$ is finite \'{e}tale of degree $n.$ Thus, the pre-image of $\Spec R$ under this map is of the form $\Spec S',$ where $\Spec S' \rightarrow \Spec S$ is an open immersion (as the pullback of $\mathcal{E}' \rightarrow\mathcal{E}''$) and finite. But this means that this is an isomorphism, so $\mathcal{E}'$ contains $\Spec S$ in this case also.
\par Finally, suppose that this codimension one point lies over a place $v$ of bad reduction of $\mathcal{E},$ with residue field $k_v$ and an algebraic closure $\overline{k_v}.$ Passing to the geometric special fiber of $\mathcal{E}' \rightarrow \mathcal{E},$ we know that this morphism is a map from $nk$ components to $k$ components for some positive integer $k,$ where the components are either all $\mathbb{G}_m$s or all $\mathbb{G}_a$s (depending on the type of bad reduction). Since the closed point of $R$ is codimension one, its closed point (when we then base change to $\overline{k_v}$) corresponds to some $G_{\overline{k_v}/k_v}$-orbit of generic points of these components. 
\par Note that $\mathcal{E}' \rightarrow \mathcal{E}$ is quasifinite (as it is \'{e}tale and quasicompact), so the pre-image of a generic point of one of the components in $\mathcal{E}_{\overline{k_v}}$ is some number of generic points of components in $\mathcal{E}'_{\overline{k_v}}.$ So suppose some generic point of some component is not in the image of $f_{\overline{k_v}}: \mathcal{E}'_{\overline{k_v}} \rightarrow \mathcal{E}_{\overline{k_v}}.$ Then, the pre-image of a generic point of one of these components under $\mathcal{E}' \rightarrow \mathcal{E}$ has more than $n$ points by pigeonhole, so the same must hold for $\mathcal{E}'' \rightarrow \mathcal{E}.$ In particular, the degree of $\mathcal{E}''_{\overline{k_v}} \rightarrow \mathcal{E}_{\overline{k_v}} $ at this point is more than $n.$ However, we proved above that the degree of $\mathcal{E}'' \rightarrow \mathcal{E}$ at any codimension one point is $n,$ which is a contradiction.
\par Therefore, each geometric component lies in the image of $f_{\overline{k_v}}.$ By translating by any point in any one of these components, the number of points in the pre-image of the generic point of each of these components in $\mathcal{E}_{\overline{k_v}}$ must be the same. Thus, the pre-image of a generic point of one of these components in $\mathcal{E}_{\overline{k_v}}$ must consist of the generic points of exactly $n$ of the components in $\mathcal{E}'_{\overline{k_v}}.$ By the same degree considerations as above, we see that these points have to be the pre-image of the generic point of components under $\mathcal{E}'' \rightarrow \mathcal{E}$ as well. Therefore, it follows that the open subscheme $\mathcal{E}'$ includes every point of $\mathcal{E}''$ lying over a codimension one point of $\mathcal{E}.$ In particular, $f$ is \'{e}tale in codimension one, since $\mathcal{E}'\rightarrow \mathcal{E}$ is \'{e}tale.
\par Finally, normality of $\mathcal{E}''$ tells us that, by purity of the branch locus (see \cite[\href{https://stacks.math.columbia.edu/tag/0BMB}{Tag 0BMB}]{stacks-project}), $\mathcal{E}'' \rightarrow \mathcal{E}$ is \'{e}tale, and thus finite \'{e}tale, which we argued is enough to prove the proposition.
\end{proof}
\subsection{Uniformity in the Elliptic Curve Case}
We conclude this section by showing that we have a uniformity statement for the size of $\pietgeo(\mathcal{E}),$ strenghtening Theorem \ref{thm: basic finiteness} in the case of elliptic curves. Recall the following theorem from the introduction.
\uniformity*
The fact that $E$ is an elliptic curve, rather than an abelian variety, is used in two crucial ways. One of these is in the use of Merel's torsion theorem from \cite{merel}, which gives a bound for the size of the torsion subgroup of $E(K)$ for any elliptic curve $E/K$ only depending on $[K:\mathbb{Q}].$ The other uses the fact that $E[n](\overline{K}) \simeq (\mathbb{Z}/n)^2.$ 
\begin{proof}
Let $s: \Spec \mathcal{O}_K \rightarrow \mathcal{E}$ be the unit section of $\mathcal{E}.$ We already know that $\pietgeo(\mathcal{E})$ is finite by Theorem \ref{thm: basic finiteness}, so we have a finite \'{e}tale cover of $\mathcal{E}$ corresponding to the $\piet$-set $\piet(\mathcal{E})/s_*\piet(\Spec \mathcal{O}_K);$ let this cover be  $\mathcal{E}' \rightarrow \mathcal{E}.$ We know that $\mathcal{E}'$ has a $\Spec \mathcal{O}_K$-section by considering the pullback of this cover along $s.$ By Proposition \ref{prop: finite etale covers are neron models}, we know that $\mathcal{E}'$ is the N\'{e}ron model of its generic fiber $E',$ with $E' \rightarrow E$ an isogeny by Lemma \ref{lem: generic fiber finite etale covers are abelian}. 
\par Now, consider the group structure of the $\overline{K}$-points of the kernel of $E' \rightarrow E.$ If this isn't cyclic, then by the structure theorem for finitely generated abelian groups, it follows that there is some $n \geq 2$ such that $(\mathbb{Z}/n)^2$ is a subgroup of the kernel. In other words, the kernel of $E' \rightarrow E$ contains $E'[n],$ which by Lemma \ref{lem: multiplication by N not etale} contradicts the assumption that $\mathcal{E}' \rightarrow \mathcal{E}$ is \'{e}tale. In other words, the $\overline{K}$-points of the kernel form a cyclic group, say $\mathbb{Z}/N.$
\par We now recall the Galois structure, arising from the fact that the kernel is a group scheme over $K.$ Note that the pre-image of the unit section $\Spec \mathcal{O}_K \rightarrow \mathcal{E}$ is a finite \'{e}tale cover of $\Spec \mathcal{O}_K,$ which is therefore a disjoint union of $\Spec \mathcal{O}_{K'}$ for varying finite extensions $K'/K$ which are unramified everywhere. In particular, the points of the kernel are defined over an everywhere unramified extension of $K.$
\par Now, since the kernel of $E' \rightarrow E$ is a group scheme over $K,$ we can view it as $\mathbb{Z}/N$ equipped with a $G_K$-action which is compatible with the group law. This gives us a morphism \[G_K \rightarrow (\mathbb{Z}/N)^*.\] But $(\mathbb{Z}/N)^*$ is abelian, so hence this factors through the Galois group of the maximal abelian extension of $K$ over $K.$ Furthermore, by the fact that all the points of the kernel are defined over an unramified extension, this further factors through the Galois group of $K^{hcf}/K,$ where $K^{hcf}$ is the Hilbert class field of $K.$ But $K^{hcf}$ is a number field (since the class group of $K$ is finite), meaning that all the points in the kernel of $E' \rightarrow E$ are defined over the number field $K^{hcf}.$
\par Finally, by Merel's theorem (\cite[Corollary to Theorem on p.1]{merel}), there is a uniform bound for the number of torsion points defined over $K^{hcf}$ of any elliptic curve over $K^{hcf}.$ In particular, this bounds the possible size of the kernel of $E' \rightarrow E,$ and therefore its degree. However, this degree was constructed to be the size of $\pietgeo(\mathcal{E}),$ and so we obtain a uniform bound for the possible size of $\pietgeo(\mathcal{E}),$ which is what we wanted to show. 
\end{proof}
If $E$ has a place of additive reduction, we can make this bound explicit and independent of $K.$
\begin{prop}\label{prop: strong bounds for additive reduction case}
Suppose that $E/K$ is an elliptic curve with at least one place of additive reduction, and $\mathcal{E}$ its N\'{e}ron model over $\mathcal{O}_K.$ Then, $|\pietgeo(\mathcal{E})| \leq 4.$
\end{prop}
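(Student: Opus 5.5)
As in the proof of Theorem \ref{thm: uniformity for elliptic curves}, I would begin with the finite \'{e}tale cover $\mathcal{E}' \rightarrow \mathcal{E}$ corresponding to the $\piet(\mathcal{E})$-set $\piet(\mathcal{E})/s_*\piet(\Spec \mathcal{O}_K)$. Here $s$ is the unit section, and the cover has degree $|\pietgeo(\mathcal{E})|$. This cover has an $\mathcal{O}_K$-section, so Proposition \ref{prop: finite etale covers are neron models} and Lemma \ref{lem: generic fiber finite etale covers are abelian} make $\mathcal{E}'$ the N\'{e}ron model of an elliptic curve $E'$, with $E' \rightarrow E$ an isogeny of degree $n = |\pietgeo(\mathcal{E})|$. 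The morphism $\mathcal{E}' \rightarrow \mathcal{E}$ is then a homomorphism of group schemes. Next, fix a place $v$ of additive reduction of $E$ and pass to the geometric special fiber over $\overline{k_v}$. Since isogenies preserve the reduction type, $E'$ also has additive reduction at $v$, so $\mathcal{E}'_{\overline{k_v}}$ and $\mathcal{E}_{\overline{k_v}}$ are both finite disjoint unions of copies of $\mathbb{G}_a$.

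The key step is to compare the identity components. Because the map is finite \'{e}tale, the identity component cannot be crushed to a point. By Lemma \ref{lem: additive reduction iso or crushes to point}, each component of $\mathcal{E}'_{\overline{k_v}}$ therefore maps isomorphically onto a component of $\mathcal{E}_{\overline{k_v}}$. The map is finite \'{e}tale of degree $n$, so the geometric fiber over the identity of $\mathcal{E}_{\overline{k_v}}$ has exactly $n$ points. This fiber is the kernel of the homomorphism $\mathcal{E}'_{\overline{k_v}} \rightarrow \mathcal{E}_{\overline{k_v}}$. Because the map is injective on the identity component $(\mathcal{E}')^0_{\overline{k_v}} \simeq \mathbb{G}_a$, the kernel injects into the component group $\Phi'_v = \mathcal{E}'_{\overline{k_v}}/(\mathcal{E}')^0_{\overline{k_v}}$. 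Hence $n \leq |\Phi'_v|$.

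It remains to bound component groups in the additive case. The Kodaira--N\'{e}ron classification (\cite[Chapter IV, Theorem 8.2]{advAEC}, Tate's algorithm) shows that the component group for additive reduction is one of $0$, $\mathbb{Z}/2$, $\mathbb{Z}/3$, $\mathbb{Z}/4$, or $(\mathbb{Z}/2)^2$, for types $II$, $II^*$, $III$, $III^*$, $IV$, $IV^*$, $I_0^*$, and $I_m^*$. In every case it has order at most $4$, so $|\Phi'_v| \leq 4$ and $n \leq 4$.

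I expect the main obstacle to be justifying carefully that the kernel on the special fiber really embeds into $\Phi'_v$. This requires knowing that the identity component is mapped isomorphically, and that the degree-$n$ fiber count holds at the geometric special fiber. Both follow from finiteness, flatness and \'{e}taleness of $\mathcal{E}' \rightarrow \mathcal{E}$, since \'{e}taleness rules out the ``crushed'' case of Lemma \ref{lem: additive reduction iso or crushes to point}. As an alternative, one could compare the whole fibers directly. The $n$ sheets over the identity component of $\mathcal{E}_{\overline{k_v}}$ must be distinct components of $\mathcal{E}'_{\overline{k_v}}$, which again gives $n \leq |\Phi'_v| \leq 4$.
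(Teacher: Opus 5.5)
Your proposal is correct and follows essentially the same route as the paper: pass to a connected finite \'{e}tale cover with a section (so it is the N\'{e}ron model of an isogenous $E'$), use Lemma \ref{lem: additive reduction iso or crushes to point} plus \'{e}taleness to see that each geometric component over an additive place maps isomorphically, and bound the degree by the number of components of $\mathcal{E}'_{\overline{k_v}}$, which is at most $4$ by Kodaira--N\'{e}ron. Your phrasing, in which the special-fiber kernel injects into the component group $\Phi'_v$, is just a group-theoretic restatement of the paper's observation that the points in any fiber lie on distinct components.
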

The idea is to use a similar logic to that of Lemma \ref{lem: component grows under finite etale}; a finite \'{e}tale cover would have to have more components than $\mathcal{E}.$ However, in the additive reduction case, the number of such components is bounded.
\begin{proof}
Suppose that $\mathcal{E}' \rightarrow \mathcal{E}$ is a connected finite \'{e}tale cover, where $\mathcal{E}'$ has a $\Spec \mathcal{O}_K$ point. Then, Lemma \ref{lem: generic fiber finite etale covers are abelian} and Proposition \ref{prop: finite etale covers are neron models} tells us that $\mathcal{E}'$ is the N\'{e}ron model of its generic fiber $E'.$ Furthermore, we know from \cite[\S 7.3, Corollary 7]{BLRNeron} that $E'$ has additive reduction at the same places that $E$ does. Let $v$ be this place, with $\overline{k_v}$ an algebraic closure of the corresponding residue field. 
\par Now, from the Kodaira-N\'{e}ron classification (again, see \cite[Chapter IV, Theorem 8.2]{advAEC}) and the fact that the N\'{e}ron model is the smooth locus of the minimal regular model (see \cite[Chapter IV, Theorem 6.1]{advAEC}), we see that the base changes $\mathcal{E}_{\overline{k_v}}', \mathcal{E}_{\overline{k_v}}$ of these schemes to $\overline{k_v}$ consists of at most $4$ $\mathbb{G}_a$s. Furthermore, the morphism between them is finite \'{e}tale by assumption.
\par Now, we can use Lemma \ref{lem: additive reduction iso or crushes to point}. Note that $\mathcal{E}'_{\overline{k_v}} \rightarrow \mathcal{E}_{\overline{k_v}}$ is a map from $4$ of disjoint copies of $\mathbb{G}_a$ to at most $4$ disjoint copies of $\mathbb{G}_a,$ so therefore each of the copies of $\mathbb{G}_a$ in the former are either mapped isomorphically, or crushed to a point. But the latter cannot happen if this morphism is also finite \'{e}tale. Hence, this morphism is an isomorphism when restricted to any component of the source (and its image in the target).
\par Therefore, the pre-image of any point consists of points which all lie on different components. It follows that the degree of $\mathcal{E}'_{\overline{k_v}} \rightarrow \mathcal{E}_{\overline{k_v}}$ is at most $4.$ But this means that $\mathcal{E}' \rightarrow \mathcal{E}$ is also degree at most $4.$ This is enough to tell us that $|\pietgeo(\mathcal{E})| \leq 4,$ as desired. 
\end{proof}
\section{Elliptic Curves over $\mathbb{Q}$}
We finish the paper by making explicit what the possible groups are for $\pietgeo(\mathcal{E})$ in the case where we have elliptic curves over $\mathbb{Q}.$ In our setting,  $\piet(\mathcal{E}) = \pietgeo(\mathcal{E}),$ as $\piet(\Spec\mathbb{Z})$ is trivial. Our goal in this section is to prove the following theorem from the introduction.
\classificationQ*
Most of this section will be concerned in the case where we have semistable reduction everywhere. We will start by showing that the above list is exhaustive; that is, no other groups are possible. Once we do that, we will give examples of curves which attain each \'{e}tale fundamental group, which were found using the database of elliptic curves over $\mathbb{Q}$ from \cite{lmfdb}. We will conclude this section with the proof in the case when we have additive reduction at some place.
\subsection{Proof of Non-Existence}
To begin, suppose that $E/\mathbb{Q}$ is an elliptic curve with semistable reduction everywhere, with N\'{e}ron model $\mathcal{E}.$ Suppose that $f: \mathcal{E}' \rightarrow \mathcal{E}$ is a finite \'{e}tale cover of the N\'{e}ron model of $E$ with connected generic fiber. Then, notice that the kernel of this morphism is finite \'{e}tale over $\Spec \mathbb{Z}.$ But $\piet(\Spec \mathbb{Z}) = 1,$ so finite \'{e}tale covers of $\Spec \mathbb{Z}$ are disjoint unions of $\Spec \mathbb{Z}.$ Therefore, if we pass to the generic fiber $f_{\mathbb{Q}}: E' \rightarrow E$ of $f,$ we see that $E'$ is an elliptic curve over $\mathbb{Q},$ and the kernel of $f_{\mathbb{Q}}$ consists of points which are all defined over $\mathbb{Q}.$
\par We now cite Mazur's torsion theorem, which reduces the proof of the above theorem to a finite list of groups to analyze.
\begin{thm}[Theorem 8 from \cite{mazurtorsion}]\label{citedthm: mazur torsion}
Let $E/\mathbb{Q}$ be an elliptic curve. Then, the torsion subgroup of $E(\mathbb{Q})$ is one of the following subgroups:
\begin{align*}
    \mathbb{Z}/N\mathbb{Z} & \text{ for } 1 \leq N \leq 12, N \neq 11 \\ \mathbb{Z}/2\mathbb{Z} \times \mathbb{Z}/2N\mathbb{Z} & \text{ for } 1 \leq N \leq 4.
\end{align*}
\end{thm}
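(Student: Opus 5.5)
This statement is Mazur's theorem, which the paper cites rather than proves, so what follows is the standard strategy as I would lay it out, not a new argument. The problem splits into two halves. First, show that every group in the list occurs, which means showing the corresponding modular curve has infinitely many non-cuspidal rational points. Second, show that nothing else occurs, which means showing the relevant modular curves have only cuspidal rational points.

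\textbf{Existence.} For each $N \leq 10$ and $N = 12$ (and for $\mathbb{Z}/2 \times \mathbb{Z}/2N$ with $N \leq 4$), I would note that $X_1(N)$ (respectively $X_1(2,2N)$) has genus $0$ and a rational cusp. It is therefore isomorphic to $\mathbb{P}^1_{\mathbb{Q}}$, and the Kubert--Tate normal form $y^2 + (1-c)xy - by = x^3 - bx^2$, with $(0,0)$ of prescribed order, gives explicit families realizing each group. This half is routine.

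\textbf{Non-existence: reduction to primes.} The torsion subgroup is $\mathbb{Z}/m \times \mathbb{Z}/mn$, and the Weil pairing forces $\mu_m \subset \mathbb{Q}$, so $m \leq 2$. It then suffices to rule out the following:
\begin{enumerate}
\item cyclic subgroups of order $11$, $14$, $15$, $16$, $18$, $20$, $21$, $24$, $25$, $27$, $35$, $49$, and prime order $p \geq 13$;
\item $\mathbb{Z}/2 \times \mathbb{Z}/10$ and $\mathbb{Z}/2 \times \mathbb{Z}/12$.
\end{enumerate}
For the finitely many composite cases, the curves $X_1(N)$ have small positive genus, and I would use the explicit models and rank computations that Kubert and Ogg gave, checking that the rational points are all cusps. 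For $p = 11$, the curve $X_1(11)$ is an elliptic curve of rank $0$ whose five rational points are cusps.

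\textbf{Non-existence: primes $p \geq 13$ (main obstacle).} I expect this step to be the heart of the matter, and I would follow Mazur's Eisenstein ideal method.
\begin{enumerate}
\item Suppose $E$ has a rational point $P$ of prime order $p \geq 13$. Replacing $E$ by a quadratic twist if necessary, its reduction at $3$ (or at $2$) is multiplicative. Then $(E, \langle P\rangle)$ gives a non-cuspidal point $x \in X_0(p)(\mathbb{Q})$ whose reduction mod $\ell = 2$ or $3$ meets the cusp $\infty$.
\item Consider the Eisenstein quotient $\tilde J$ of $J_0(p)$. Via the Eisenstein ideal and the cuspidal subgroup, show that $\tilde J(\mathbb{Q})$ is finite.
\item The map $f\colon X_0(p)^{\mathrm{smooth}} \to \tilde J$, $y \mapsto [y - \infty]$, is a formal immersion at $\infty$ in characteristic $\ell \neq p$. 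This uses $q$-expansions of Eisenstein-quotient eigenforms and the fact that $a_1, a_2$ generate the Hecke algebra modulo $\ell$.
\item Since $\tilde J(\mathbb{Q})$ is finite, $f(x)$ is torsion. It reduces to $0$ at $\ell$, so $f(x) = 0$ by injectivity of torsion under reduction. The formal immersion then forces $x = \infty$, which is a contradiction.
\end{enumerate}
The hardest step is item 2, the finiteness of the Mordell--Weil group of the Eisenstein quotient. I would attack it by a descent using the finite flat Néron model of $J_0(p)$ over $\mathbb{Z}[1/p]$ and the structure of the Eisenstein prime kernels, which are extensions of $\mu$ by $\mathbb{Z}/n$. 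Multiplicative reduction at $p$ and the known structure of $\mathbb{Z}$ (no unramified extensions) should then bound the Selmer group.
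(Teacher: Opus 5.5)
The paper gives no proof of this statement (it is quoted from Mazur), so your outline can only be measured against Mazur's argument. Against that, it has a genuine gap at $p = 13$, which you have placed inside the Eisenstein-quotient step. Since $X_0(13)$ has genus $0$, $J_0(13) = 0$. So the Eisenstein quotient $\tilde J$ is zero and $f\colon X_0(13) \to \tilde J$ is constant. It is not a formal immersion at $\infty$, and your steps 3 and 4 give no contradiction. Mazur's method applies exactly when $X_0(p)$ has positive genus, i.e.\ $p = 11$ or $p \geq 17$. The case $p = 13$ needs separate input: the Mazur--Tate descent on the genus-$2$ Jacobian $J_1(13)$. This shows $J_1(13)(\mathbb{Q}) \cong \mathbb{Z}/19\mathbb{Z}$, all cuspidal, and from that one deduces that the only rational points of $X_1(13)$ are its six rational cusps. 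This is also how the case enters Mazur's Theorem 8. Two smaller remarks. The existence half is not part of the statement as phrased, which only asserts that the torsion lies in the list. And the composite levels are not all of small genus ($X_1(49)$ has genus $69$), though deferring them to the literature is fine.

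Several steps of the prime case also need repair.

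\emph{Step 1.} The quadratic twist is the wrong mechanism. A twist preserves $j$, so it cannot turn potentially good reduction into multiplicative reduction. It does not move $x$ on $X_0(p)$. And for $p \geq 5$ it destroys the rational point of order $p$. What you need is that $P$ itself forces split multiplicative reduction at $\ell = 3$:
with good reduction, $P$ would inject into $\tilde E(\mathbb{F}_3)$, which has at most $7$ points;
with additive or non-split multiplicative reduction, $P$ would inject (since $E_1(\mathbb{Q}_3)$ is pro-$3$) into $E(\mathbb{Q}_3)/E_1(\mathbb{Q}_3)$, which has order at most $12$.
Hence $x$ reduces to a cusp mod $3$. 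Since $P$ lies off the identity component, $\langle P\rangle$ is not the subgroup $\mu_p$ of the Tate curve. If the resulting cusp is not $\infty$ in your normalization, replace $x$ by $w_p(x)$; the Atkin--Lehner involution is defined over $\mathbb{Q}$.

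\emph{Step 4.} Work at $\ell = 3$, not $\ell = 2$. Injectivity of torsion under reduction needs $e < \ell - 1$, which fails at $2$, and $\tilde J(\mathbb{Q})$ can have even order (it is $\mathbb{Z}/4\mathbb{Z}$ for $p = 17$).

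\emph{Step 3.} You only need a cotangent vector $\omega$ of $\tilde J$ over $\mathbb{F}_\ell$ with $a_1(\omega) \neq 0$. This follows from Hecke-stability of the cotangent space together with $a_1(T_n\omega) = a_n(\omega)$. A condition on $a_1, a_2$ is what one needs for symmetric squares, not for the curve.

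\emph{Step 2.} This is a placeholder for the core of Mazur's paper rather than an argument. Note also that the N\'{e}ron model of $J_0(p)$ over $\mathbb{Z}[1/p]$ is an abelian scheme, not finite flat. The finite flat objects in Mazur's descent are the kernels of the Eisenstein primes.
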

We start by observing that any of the groups in the second line are not possible for the kernel of the isogeny $\mathcal{E}' \rightarrow \mathcal{E}$ if this isogeny is finite \'{e}tale, by Lemma \ref{lem: multiplication by N not etale}. 
\par We now prove the following intermediate claim.
\begin{prop}\label{prop: finite etale covers are prime}
In the above setting, the degree of $f$ must be prime or one.
\end{prop}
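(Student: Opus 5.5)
The plan is to combine three ingredients: the discriminant scaling of Lemma \ref{lem: component grows under finite etale}, Faltings's isogeny formula $h(E') = h(E) - \frac{1}{2}\log n$ with $n = \deg f$, and Silverman's explicit formula for the Faltings height of an elliptic curve over $\mathbb{Q}$ from \cite[Proposition 1.1]{silvermanheights}. Together these should give an inequality that no composite $n$ can satisfy.

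\emph{Step 1: the discriminant of $E'$.} Since $E$ is semistable everywhere, Lemma \ref{lem: component grows under finite etale} applies at every prime, and so $|\Delta_{E'}| = |\Delta_E|^n$ for the minimal discriminants. Moreover $\mathcal{E}'$ is again a N\'{e}ron model of a semistable curve by Proposition \ref{prop: finite etale covers are neron models}. Also $|\Delta_E| \geq 2$, since there is no elliptic curve over $\mathbb{Q}$ with good reduction everywhere.

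\emph{Step 2: reduce to finitely many composite degrees.} By the discussion preceding the statement, the kernel of $f_{\mathbb{Q}}$ is cyclic and consists of rational points. So if $n$ is composite, Mazur's theorem leaves only $n \in \{4,6,8,9,10,12\}$. In each such case I pick a prime $p \mid n$ and factor $E' \to E'' \to E$ through the quotient by the subgroup of order $n/p$. By Lemma \ref{lem: finite etale factors into finite etales}, both maps extend to finite \'{e}tale morphisms of N\'{e}ron models. Hence Step 1 and Faltings's formula apply to each step separately; for instance $|\Delta_{E''}| = |\Delta_E|^p$ and $|\Delta_{E'}| = |\Delta_{E''}|^{n/p}$.

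\emph{Step 3: Silverman's formula.} Write Silverman's formula as
\[12h(E) = \log|\Delta_E| - \log\bigl(|\Delta(\tau)|(\operatorname{Im}\tau)^6\bigr) + c,\]
where $\tau$ lies in the fundamental domain and $c$ is an absolute constant. The function $|\Delta(\tau)|(\operatorname{Im}\tau)^6$ is bounded above by an absolute constant $M$. Applying the formula to $E'$ and substituting $h(E') = h(E) - \frac12 \log n$ should give
\[(n-1)\log|\Delta_E| + 6\log n \;\le\; \log M - \log\bigl(|\Delta(\tau_E)|(\operatorname{Im}\tau_E)^6\bigr).\]
The right side is roughly $\frac{1}{12}\cdot 12 \log\max(1,|j(E)|)$ up to bounded error, i.e.\ about $\log\max(|c_4|^3,|\Delta_E|) - \log|\Delta_E|$. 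This inequality says that large $n$ forces $|\Delta_E|$ to be tiny relative to $|c_4|^3$, i.e.\ $j(E)$ enormous.

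\emph{Step 4 (main obstacle): excluding the composite cases.} The archimedean term does not by itself bound $n$, because $j(E)$ can be large. I would first try to play the two steps of the factorization against each other. Each intermediate curve is semistable, and its $j$-invariant is tied to that of the next curve by the modular relation for a degree-$p$ isogeny. For $\tau$ near the cusp, $\log|j(E'')|$ should be about $p\log|j(E)|$ or $\frac{1}{p}\log|j(E)|$ according to the direction of the isogeny, which can be read off from whether $\tau$ is multiplied or divided by $p$.

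Meanwhile, Step 1 forces $\log|\Delta|$ to multiply by exactly the degree at every step. For the isogeny $E'' \to E$ that compatibility seems to force $\tau_{E''} = \tau_E/p$ in the cusp regime. Iterating to $E' \to E''$ then requires the kernel of the composite to be the full connected-at-infinity direction twice; I expect this to contradict the cyclicity of the rational kernel, or the requirement that all kernel points are real.

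If this soft argument fails, the fallback is explicit. Use the Tate normal form parametrizations of curves with a rational point of order $4,6,8,9,10,12$ (on $E'$). Compute $\Delta$ and $c_4$ in terms of the parameter, and check directly that the identity $|\Delta_{E'}| = |\Delta_{E}|^n$, with both sides minimal and semistable, is impossible, using the local multiplicativity of $v_\ell(\Delta)$ under the isogeny.
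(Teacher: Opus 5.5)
Your Steps 1--3 use the same ingredients as the paper: discriminant scaling, Faltings's isogeny formula, Silverman's formula, and factoring into prime-degree steps that stay finite \'{e}tale. Your remark that the first step forces $\operatorname{Im}\tau_{E''}=\operatorname{Im}\tau_E/p$ is also what the paper proves. But Step 4, which carries all the content, is not an argument, and the contradiction you anticipate there does not exist. Two successive divisions $\tau\mapsto(\tau+k_1)/p_1\mapsto(\tau+k_1+p_1k_2)/(p_1p_2)$ correspond to the sublattice $p_1p_2\mathbb{Z}+(\tau+k_1+p_1k_2)\mathbb{Z}$ of $\mathbb{Z}+\tau\mathbb{Z}$, whose quotient is cyclic, so cyclicity is not violated. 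Likewise, a kernel of rational (hence real) points is compatible with dividing $\operatorname{Im}\tau$ by $p$; the paper's example 11.a2 $\to$ 11.a1 has kernel $\mathbb{Z}/5$ of this kind. The obstruction is quantitative, and your Step 3 discards it by keeping only the one-sided bound $|\Delta(\tau)|(\operatorname{Im}\tau)^6\le M$.

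The missing idea is the two-sided estimate $\bigl|\log|\Delta(\tau)|+12\log 2\pi+2\pi\operatorname{Im}\tau\bigr|\le\frac19$ on the fundamental domain. Combined with Silverman's and Faltings's formulas, it turns each prime step of degree $p$ into an approximate \emph{equality}:
\[\Bigl|\tfrac{\pi}{p}\operatorname{Im}\tau-\tfrac12\log|\Delta_E|-\tfrac{6\log p}{p-1}\Bigr|\le\tfrac19 .\]
Apply this to $E_1\to E$ (degree $p_1$) and to $E_2\to E_1$ (degree $p_2$), using $\operatorname{Im}\tau_1=\operatorname{Im}\tau/p_1$ and $|\Delta_{E_1}|=|\Delta_E|^{p_1}$. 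Subtracting the two, the main terms no longer cancel:
\[\frac{(p_2-1)\pi}{p_1p_2}\operatorname{Im}\tau+\frac{p_1-1}{2}\log|\Delta_E|\le\frac29+6\max\Bigl\{0,\log\frac{p_1^{1/(p_1-1)}}{p_2^{1/(p_2-1)}}\Bigr\}.\]
A short case analysis over $p_1,p_2\le 7$ (Mazur) then gives $|\Delta_E|<90$.

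Even this does not finish the proof. It only bounds the conductor, and the paper closes the argument by checking in the LMFDB that no semistable isogeny class of conductor below $90$ contains such an isogeny. Your proposal has no counterpart to this finite verification.

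Your Tate-normal-form fallback is likewise only a plan. The condition $v_\ell(\Delta_{E'})=n\,v_\ell(\Delta_E)$ for all $\ell$ is a Diophantine condition on an infinite one-parameter family. At any single multiplicative prime it is consistent with every $n$ (take a kernel generated by an $n$-th root of the Tate parameter). You give no argument that these conditions can never hold simultaneously.
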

\begin{proof}
Our starting place is the following formula, which comes from taking Silverman's formula, \cite[Proposition 1.1]{silvermanheights}, over $\mathbb{Q}:$
\[h(E) = \frac{1}{12} \left(\log |\Delta_{E/\mathbb{Q}}| - \log |\Delta(\tau) \text{Im} (\tau)^6|\right).\] Here, $\Delta_{E/\mathbb{Q}}$ is the minimal discriminant of $E,$ $\Delta$ is the discriminant modular form on $\mathbb{H},$ the complex upper half plane, and $\tau \in \mathbb{H}$ is the element in the standard fundamental domain such that $\mathbb{C}/(\mathbb{Z} + \mathbb{Z}\tau)$ is isomorphic as complex analytic spaces to $E_{\mathbb{C}}.$ Let $\Delta_{E'/\mathbb{Q}}, \tau'$ be the corresponding quantities for $E'.$
\par We first show that, given $E,$ there is only at most one possible prime degree for $f,$ and we give a condition this prime needs to satisfy. Hence, suppose that the degree of our finite \'{e}tale cover is $p.$ By Lemma \ref{lem: component grows under finite etale}, we know that $\Delta_{E'/\mathbb{Q}} = \Delta_{E/\mathbb{Q}}^p.$ On the other hand, we know from the complex analytic description of isogeneous elliptic curves that the imaginary part of $\tau'$ is either $p$ or $1/p$ times that of $\tau.$
\par Now, we can estimate the size of $\Delta(\tau);$ for this, we use \cite[Exercise in \S 2]{silvermanheights}. Namely, if we set \[c_{\tau} = \log |\Delta(\tau)| + 12 \log 2\pi + 2 \pi \text{Im} \tau,\] then $|c_{\tau}| \leq \frac{1}{9}.$ 
\par We now put these together, using the formulas for both $E, E'.$ Taking the difference $h(E') - h(E),$ Faltings's isogeny formula (\cite[Lemma 5]{faltingsmordell}) tells us that this is $-\frac{1}{2} \log p.$ Combining this with Silverman's formula, we see that \[-\frac{1}{2}\log p = \frac{p-1}{12} \log |\Delta_{E/\mathbb{Q}}| + \frac{c_{\tau} - c_{\tau'}}{12} - \frac{\pi}{6} \text{Im} \tau + \frac{\pi}{6} \text{Im} \tau' + \frac{1}{2}\log \left|\frac{\text{Im} \tau}{\text{Im} \tau'}\right|.\] But we know that the left-hand side is negative. Furthermore, we know that $\text{Im} \tau \geq \frac{\sqrt{3}}{2},$ and $|c_{\tau} - c_{\tau'}| \leq 2/9.$ So hence the imaginary part of $\tau$ has to be larger than that of $\tau'$ for the right-hand side to be negative, so $\text{Im} \tau' = \frac{1}{p}\text{Im} \tau.$ We can then rewrite this as \[-\frac{1}{2}\log p = \frac{p-1}{12} \log |\Delta_{E/\mathbb{Q}}| + \frac{c_{\tau} - c_{\tau'}}{12} - \frac{(p-1)\pi}{6p} \text{Im} \tau  + \frac{1}{2}\log p,\] or that \[-\log p = \frac{p-1}{12} \log |\Delta_{E/\mathbb{Q}}| + \frac{c_{\tau} - c_{\tau'}}{12} - \frac{(p-1)\pi}{6p} \text{Im} \tau.\] In other words, we see that, since $p - 1 \geq 1$ and $c_{\tau}, c_{\tau}'$ have magnitudes are at most $1/9,$ that \[|\frac{\pi}{p}\text{Im}\tau - \frac{1}{2}\log |\Delta_{E/\mathbb{Q}}| - \frac{6\log p}{p-1}| \leq \frac{1}{9}.\] 
\par Now, suppose for the sake of contradiction that there existed a finite \'{e}tale cover $f: \mathcal{E}' \rightarrow \mathcal{E}$ whose degree was not prime. We've argued previously that the kernel of the isogeny consists of $\mathbb{Q}$-points, so the isogeny can be factored into isogenies of prime degree. Say the last two of these isogenies are $E_2 \rightarrow E_1 \rightarrow E,$ which spread out to morphisms of N\'{e}ron models $\mathcal{E}_2 \rightarrow \mathcal{E}_1 \rightarrow \mathcal{E}.$ We know from Lemma \ref{lem: finite etale factors into finite etales} that these are all finite \'{e}tale. Say the degree of the first map is $p_2,$ the second is $p_1.$
\par Our above arguments show that $\text{Im} \tau_1 = \frac{1}{p_1} \text{Im} \tau, \text{Im} \tau_2 = \frac{1}{p_2} \text{Im} \tau_1.$ Therefore, we require that \[|\frac{\pi}{p_1}\text{Im}\tau - \frac{1}{2}\log |\Delta_{E/\mathbb{Q}}| - \frac{6\log p_1}{p_1 - 1}| \leq \frac{1}{9}\] and \[|\frac{\pi}{p_1p_2}\text{Im}\tau - \frac{p_1}{2}\log |\Delta_{E/\mathbb{Q}}| - \frac{6 \log p_2}{p_2 - 1}| \leq \frac{1}{9}.\] In other words, we require that \[\left|\frac{(p_2 - 1)\pi}{p_1p_2}\text{Im}\tau + \frac{p_1 - 1}{2}\log |\Delta_{E/\mathbb{Q}}| + 6\log \frac{p_2^{1/(p_2 - 1)}}{p_1^{1/(p_1 - 1)}}\right| \leq \frac{2}{9}.\] Now, the last term is the only possible one which might not be positive, so this implies that \[ 0 \leq \frac{(p_2 - 1)\pi}{p_1p_2}\text{Im}\tau + \frac{p_1 - 1}{2}\log |\Delta_{E/\mathbb{Q}}| \leq \frac{2}{9} + 6\max\{0, \log \frac{p_1^{1/(p_1 - 1)}}{p_2^{1/(p_2 - 1)}}\}.\] Now, as we are working over $\mathbb{Q},$ Mazur's torsion tells us that $2 \leq p_1, p_2 \leq 7.$ We now do some casework.
\par If $p_1 = 7,$ then we find that $\log |\Delta_{E/\mathbb{Q}}| \leq \frac{2}{27} + 2\log 7^{1/6},$ or that $\Delta_{E/\mathbb{Q}} < 3.$ If $p_1 = 5,$ we get the bound $1/9 + 3\log 5^{1/4},$ or $|\Delta_{E/\mathbb{Q}}| < 4.$ Next, if $p_1 = 3,$ we get the bound $2/9 + 6 \log 3^{1/2}$ for $\log |\Delta_{E/\mathbb{Q}}|,$ or $|\Delta_{E/\mathbb{Q}}| < 34.$ Finally, say that $p_1 = 2.$ Then, we know that $\frac{\pi(p_2 - 1)}{p_2} \text{Im} \tau + \log |\Delta_{E/\mathbb{Q}}| \leq 4/9 + 12 \log 2,$ which as $p_2 \geq 2$ means that \[\frac{\pi \text{Im} \tau}{2} + \log \Delta_{E/\mathbb{Q}} \leq 4/9 + 12 \log 2.\]
But we also know that \[\frac{\pi}{2}\text{Im}\tau - \frac{1}{2}\log |\Delta_{E/\mathbb{Q}}| - 6\log 2 \geq -\frac{1}{9},\] which implies that \[\frac{3}{2} \log |\Delta_{E/\mathbb{Q}}| \leq \frac{1}{3} + 6 \log 2,\] or that $|\Delta_{E/\mathbb{Q}}| < 90.$
However, this means that the conductor of $E,$ and thus of $E_1, E_2,$ is also less than $90,$ (e.g. see \cite[Chapter IV, Corollary 11.2]{advAEC}). In other words, if such an isogeny existed which wasn't prime, then it would come from an isogeny class of elliptic curves with semistable reduction everywhere with conductor less than $90.$ However, one can then check by a search through \cite{lmfdb} that no such isogenies exist: either Lemma \ref{lem: component grows under finite etale} fails for those isogenies, or the change in Faltings height is not the correct one. 
\end{proof}
Hence, we have already shown that the only possible values for the group $\piet(\mathcal{E})$ are among the groups $\mathbb{Z}/N\mathbb{Z}$ for $N = 1, 2, 3, 5, 7.$ We now need to show that $7$ cannot be attained.
\subsection{Ruling out $N = 7$: the General Strategy}
To rule out the group $\mathbb{Z}/7\mathbb{Z},$ we prove the following stronger claim.
\begin{prop}\label{prop: no seventh power discriminant and torsion}
There does not exist an elliptic curve $E/\mathbb{Q}$ with semistable reduction everywhere, whose minimal discriminant is a seventh power, and which has a $7$-torsion point defined over $\mathbb{Q}.$
\end{prop}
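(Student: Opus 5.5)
The plan follows the strategy sketched in the introduction. We translate the hypotheses into a Diophantine problem and then use the descent of \cite{pss} through twists of $X(7)$.

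\emph{Step 1: reduction to an equation.} Suppose $E/\mathbb{Q}$ is semistable, has a rational $7$-torsion point, and has minimal discriminant $\Delta_{E/\mathbb{Q}} = D^7$. Take a minimal Weierstrass model with invariants $c_4, c_6$, so that $1728\Delta = c_4^3 - c_6^2$. Set $A = c_6$, $B = -c_4$, $C = D$. This gives an integral solution of $A^2 + B^3 = 1728C^7$ with $C \neq 0$. Semistability gives $\gcd(c_4, \Delta) = 1$ away from small primes; more precisely, $v_p(c_4) = 0$ whenever $p \mid \Delta$. Hence the solution is primitive in a suitable sense, and it carries strong $2$-adic and $3$-adic information coming from minimality and semistability at $2$ and $3$ (e.g.\ $v_2(c_4)=0$ or $v_2(\Delta)=0$). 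The $j$-invariant $j = c_4^3/\Delta = -B^3/C^7$ is then a rational point of $X(1)$ that lifts both to this integral point and to a rational point of $X_1(7)$.

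\emph{Step 2: lift to twists of $X(7)$.} The map $\Spec \mathbb{Z}[A,B,C]/(A^2+B^3-1728C^7) - \{0\} \to X(1)$ is a $(2,3,7)$-cover away from the bad primes. Its primitive integral points, up to the $\mathbb{G}_m$-weighting, correspond to rational points on twists of $X(7)$ over $\mathbb{Q}$, as in \cite{pss}. Similarly, a rational point on $X_1(7)$ lifts to a twist of $X(7)$ determined by the mod $7$ Galois representation. That representation has the shape $\begin{pmatrix} 1 & * \\ 0 & \chi_7\end{pmatrix}$, so only a few twists occur. Concretely, $E[7]$ is an extension of $\mu_7$ by $\mathbb{Z}/7$, and the twist is governed by the class of $*$. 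I would use the universal curve over $X_1(7)$: $E_t$ with $t$ the Tate normal form parameter, $b = t^3 - t^2$, $c = t^2 - t$. Writing $\Delta(E_t) = t^7(t-1)^7(t^3-8t^2+5t+1)$ explicitly, the discriminant being a seventh power up to the minimal-model scaling $u^{12}$ forces $t = m/n$ with $t^3-8t^2+5t+1$ (homogenized) a seventh power times a unit factor. This is a cleaner model of the same problem, and I would try it first.

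\emph{Step 3: local obstructions.} Following \cite{pss}, I would enumerate the finitely many twists of $X(7)$ (equivalently, the finitely many values of $\gcd$'s and sign/unit data in the homogenized equation $m^3 - 8m^2n + 5mn^2 + n^3 = \pm 7^a\, w^7$, with the scaling factors $u$ at $2, 3, 7$). For each, I would check with Sage that its $\mathbb{Q}_p$-points for $p = 2, 3, 7$ have images in $X(1)$ incompatible with the local conditions from Step 1. Those conditions are multiplicative reduction, minimality, and $\Delta$ a seventh power, which for $p=2,3$ means $v_p(j) \equiv 0 \pmod 7$ or $v_p(j) \geq 0$ with good reduction. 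The cubic $x^3-8x^2+5x+1$ has discriminant $7^4$ and defines the cubic subfield of $\mathbb{Q}(\zeta_7)$. Its norm form therefore gives a descent over that field, and $7$ is the only prime where the factors can share content. This is where the condition at $7$ enters.

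\emph{Main obstacle.} The hard step is Step 3. Ruling out every twist requires a genuinely global-to-local argument, and some twists (e.g.\ the one containing the trivial points $t = 0, 1, \infty$ or cusps) will have local points everywhere. For those, I expect to have to show that the only rational points are cusps, or to exploit the semistability at $2$ and $3$ to exclude the relevant residue classes. If a twist survives all local tests, I would fall back on a Chabauty or elliptic-curve-quotient argument on that twist, as \cite{pss} do for $A^2+B^3=C^7$.
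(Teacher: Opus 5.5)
Your overall route is the paper's: Step 1 is its reduction to $A^2+B^3=1728C^7$, and Steps 2--3 are the Poonen--Schaefer--Stoll descent through twists of $X(7)$ cut down by the Borel structure of $E[7]$, followed by computer $p$-adic tests. One small slip in Step 1: with $C=D$ you get $A^2+B^3=-1728C^7$, so take $C=-D$.

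Your Tate-normal-form variant is sound, and cleaner than your hedging suggests. After scaling by $u=n^2$ the model has $\Delta=(mn(m-n))^7F(m,n)$, and this model is already globally minimal. Modulo any prime dividing $mn(m-n)$ one has $c_4\equiv m^8$ or $n^8$, and $\mathrm{Res}(c_4,F)=7^2$. Moreover $7\mid F(m,n)$ forces $7\mid c_4$, i.e.\ additive reduction at $7$. So no $u^{12}$ or $7^a$ factors occur, and the hypotheses reduce exactly to $F(m,n)=w^7$ with $mn(m-n)\neq 0$. The ideals $(m-\theta_i n)$ are then pairwise coprime in $\mathcal{O}_K$, so class number one gives $m-\theta n=\epsilon\beta^7$ with $\epsilon$ a unit. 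This makes the ``unramified outside $7$'' input automatic and replaces the paper's lemma on unramified $E[7]$.

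The genuine gap is the endgame, which you leave open as the ``main obstacle.'' The twist carrying the rational cusps over $t=0,1,\infty$ cannot be removed by any local test. Near those cusps the $p$-adic points have multiplicative reduction with $v_p(j)=-7v_p(t)$, and $F(m,n)$ is close to $\pm1$, hence a local seventh power. So every condition you list, including semistability at $2$ and $3$, holds on those residue disks. Your second fallback therefore cannot work, and a global determination of that curve's rational points is unavoidable; the proposal does not supply one. The paper's missing ingredient is that this surviving twist is isomorphic over $\mathbb{Q}$, by an explicit linear change of coordinates, to $X(7)$ itself. That curve is the Klein quartic, whose rational points are classically known to be the three coordinate points. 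All three satisfy $\Psi_6=0$, so they lie over $j=\infty$ and give no elliptic curve.

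You also only expect, rather than show, that the remaining twists are locally obstructed. The paper checks that the six other twists fail the valuation condition $v_7(j)\in 3\mathbb{Z}_{\ge 0}\cup 7\mathbb{Z}_{<0}$ at $p=7$. Without these two inputs, Step 3 is a plan rather than a proof.
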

We first verify that this proposition implies that $\mathbb{Z}/7\mathbb{Z}$ is not the \'{e}tale fundamental group of the N\'{e}ron model of an elliptic curve $E/\mathbb{Q}$ with semistable reduction everywhere. To that end, suppose for the sake of contradiction that such an $E$ existed, and $\mathcal{E}$ its N\'{e}ron model over $\mathbb{Z}.$ Then, we have a degree $7$ finite \'{e}tale cover $\mathcal{E}' \rightarrow \mathcal{E}.$ 
\par Let $E'$ be the generic fiber of $\mathcal{E}';$ this is an elliptic curve over $\mathbb{Q}$ by Lemma \ref{lem: generic fiber finite etale covers are abelian}. We know that the kernel of $E' \rightarrow E$ consists of rational points, so hence $E'$ has a $7$-torsion point over $\mathbb{Q}.$ Furthermore, it has semistable reduction everywhere since this property is preserved under isogeny (e.g. see \cite[\S 7.3, Corollary 7]{BLRNeron}). Finally, the minimal discriminant of $E'$ is a seventh power by Lemma \ref{lem: component grows under finite etale}. This contradicts the above proposition.
\par Hence, we turn to proving Proposition \ref{prop: no seventh power discriminant and torsion}. Assume for the sake of contradiction that there existed an elliptic curve $E/\mathbb{Q}$ with semistable reduction, such that it has a rational $7$-torsion point and such that its minimal discriminant is a seventh power. We start by proving the following lemma. 
\begin{lemma}\label{lem: unramified 7-torsion}
Suppose that $E/\mathbb{Q}$ is an elliptic curve with a $7$-torsion point. Let $p \neq 7$ be a prime. Suppose that $E$ has semistable reduction at $p$ and that $v_p(\Delta)$ is divisible by $7.$ Then, $E[7],$ as a $G_{\mathbb{Q}_p}$-module, is unramified. 
\end{lemma}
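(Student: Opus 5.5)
Split into good and multiplicative reduction at $p$.

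\emph{Good reduction at $p$.} Since $p \neq 7$, the N\'{e}ron model of $E$ over $\mathbb{Z}_p$ is an abelian scheme. Its $7$-torsion is finite \'{e}tale over $\mathbb{Z}_p$, so the inertia group $I_p$ acts trivially on $E[7](\overline{\mathbb{Q}_p})$ (N\'{e}ron--Ogg--Shafarevich). This case does not use the discriminant hypothesis.

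\emph{Multiplicative reduction at $p$.} Here I would use Tate uniformization. After passing to at most an unramified quadratic extension of $\mathbb{Q}_p$ (to make the reduction split), we have $E \simeq \mathbb{G}_m/q^{\mathbb{Z}}$ with $v_p(q) = v_p(\Delta) = v_p(\Delta_{\min})$. Passing to an unramified extension does not change the inertia group, so it is harmless. The $7$-torsion then sits in an exact sequence of Galois modules
\[0 \rightarrow \mu_7 \rightarrow E[7] \rightarrow \mathbb{Z}/7 \rightarrow 0,\]
where the quotient is generated by the class of a chosen $7$th root $q^{1/7}$. Since $p \neq 7$, $\mu_7$ is unramified. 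An inertia element $\sigma$ acts on $q^{1/7}$ by $\sigma(q^{1/7}) = \zeta \cdot q^{1/7}$ for some $\zeta \in \mu_7$, so $E[7]$ is unramified exactly when $\mathbb{Q}_p^{\mathrm{unr}}(q^{1/7})/\mathbb{Q}_p^{\mathrm{unr}}$ is unramified.

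Write $q = p^{7k}u$ with $u \in \mathbb{Z}_p^{\mathrm{unr},\times}$, using $7 \mid v_p(q)$. Then $q^{1/7} = p^k u^{1/7}$. Because $7$ is prime to $p$, Hensel's lemma shows that $u$ is a $7$th power in the strict henselization, that is, in $\mathbb{Z}_p^{\mathrm{unr}}$. Hence $q^{1/7}$ lies in $\mathbb{Q}_p^{\mathrm{unr}}$, inertia fixes it, and $E[7]$ is unramified.

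The rational $7$-torsion hypothesis is not needed in either case; it may only serve to fix which sub or quotient is which. The main point to get right is the Tate-curve step: checking that $v_p(q) = v_p(\Delta_{\min})$ for the Tate parameter, and handling the non-split case by the unramified quadratic twist. Both are standard, via \cite[Chapter V]{advAEC}.
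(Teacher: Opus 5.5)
Your proof is correct, but in the multiplicative case it takes a different route from the paper, which never uses the Tate curve.

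The paper works with the N\'eron model $\mathcal{E}$ over $\mathbb{Z}_p$ and argues as follows:
\begin{itemize}
\item Since $p \neq 7$, $[7]$ is \'etale on $\mathcal{E}$, so $\mathcal{E}[7]$ is quasi-finite \'etale. It splits as a finite part $X_f$ plus a piece with empty special fiber.
\item It then counts points on the geometric special fiber. The component group is cyclic of order $v_p(\Delta)=7k$, so exactly $7$ components are sent to the identity component by $[7]$, and each contains $7$ points killed by $7$.
\item Hence $\deg X_f = 49$, which is the generic degree. So $\mathcal{E}[7]$ is finite \'etale and its points are defined over unramified extensions.
\end{itemize}

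Your argument instead writes $E[7]$ explicitly as an extension of $\mathbb{Z}/7$ (generated by the class of $q^{1/7}$) by $\mu_7$. This reduces the question to Hensel's lemma for $x^7-u$. It is more elementary and makes transparent why $7 \mid v_p(\Delta)$ is exactly the right condition. It even gives the converse: if $7 \nmid v_p(q)$, then $q^{1/7}$ generates a totally ramified degree-$7$ extension of $\mathbb{Q}_p^{\mathrm{unr}}$. The cost is that you must import $v_p(q)=v_p(\Delta_{\min})$ and use the unramified quadratic twist for non-split reduction.

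The paper's version has different advantages. It treats split and non-split reduction uniformly by passing to $\overline{\mathbb{F}_p}$, and it needs no explicit uniformization. It also reuses the same component-group counting on N\'eron models that underlies Lemma \ref{lem: component grows under finite etale}.

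You are right that the rational $7$-torsion hypothesis plays no role; the paper's proof does not use it either.
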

\begin{proof}
First, if $v_p(\Delta) = 0,$ this follows from the N\'{e}ron-Ogg-Shafarevich criterion. So suppose that $E$ has semistable bad reduction. Let $\mathcal{E}$ be the N\'{e}ron model of $E_{\mathbb{Q}_p}$ over $\mathbb{Z}_p.$ We know that, since $p \neq 7,$ the multiplication by $7$ map is an \'{e}tale isogeny of the N\'{e}ron model (e.g. see \cite[\S 7.3, Lemma 2]{BLRNeron}), and so is a quasifinite \'{e}tale morphism. If we look at the kernel, this is a quasifinite \'{e}tale group scheme over $\Spec \mathbb{Z}_p.$
\par The structure of quasifinite group schemes over a henselian base (for instance, see \cite[\S 7.3, p.179]{BLRNeron})
tells us that we can write $\mathcal{E}[7]$ as the disjoint union of a finite group scheme $X_f$ and another scheme which has no special fiber, $X_{\eta}.$ Note that $X_f$ is finite \'{e}tale, since $X_f$ is an open subscheme of $\mathcal{E}[7],$ which we already know is an \'{e}tale group scheme over $\mathbb{Z}_p.$
\par Now, consider the degree of $X_f \rightarrow \Spec \mathbb{Z}_p.$ We can compute this by looking at the special fiber and base changing to the algebraic closure of $\mathbb{F}_p.$ Since $7|v(\Delta),$ as mentioned in the proof of Lemma \ref{lem: component grows under finite etale}, the geometric special fiber of $\mathcal{E}$ consists of $7k$ copies of $\mathbb{G}_m$ for some positive integer $k.$ As the component group is cyclic, as noted in \cite[Chapter IV, \S 9]{advAEC}, there are $7$ components of this geometric special fiber of $\mathcal{E}$ which, under multiplication by $7,$ get sent to the identity component. Furthermore, each of these $7$ components has $7$ points which are sent to the identity by multiplication by $7,$ since this is true for the identity component and since multiplication by $7$ is surjective on the identity component. Therefore, the degree of $X_f \rightarrow \Spec \mathbb{Z}_p$ is $49.$
\par On the other hand, we know the degree of the generic fiber pf $\mathcal{E}[7]$ is $49 = |E[7](\overline{K})|,$ and this degree is at least the degree of $X_f$ (since the generic fiber also has the $X_{\eta}$ part). Therefore, these are actually equal, so $\mathcal{E}[7]$ is equal to its finite part. In particular, this means that all the $7$-torsion points are defined over unramified field extensions of $\mathbb{Q}_p,$ as $X_f = \mathcal{E}[7]$ being finite \'{e}tale over $\mathbb{Z}_p$ means it is a disjoint union of valuation rings of unramified extensions of $\mathbb{Q}_p.$ In particular, this means that the action of inertia is trivial on the $7$-torsion points, which is what we wanted to show.
\end{proof}
We now encode the condition of having seventh power discriminant in a more concrete form, putting us in a situation similar to that studied in \cite{pss}.
\begin{prop}\label{prop: seventh power discriminant equation}
Suppose that an elliptic curve $E/\mathbb{Q}$ with semistable reduction everywhere has minimal discriminant which is a seventh power. Then, there exist relatively prime integers $A, B, C$ such that $A^2 + B^3 = 1728C^7,$ and such that $E$ has $j$-invariant $B^3/C^7.$
\end{prop}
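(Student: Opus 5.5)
We will work with a global minimal Weierstrass model of $E$ (over $\mathbb{Q}$ one always exists) with invariants $c_4, c_6 \in \mathbb{Z}$ and minimal discriminant $\Delta$. These satisfy the standard identity $c_4^3 - c_6^2 = 1728\Delta$, and $j = c_4^3/\Delta$. By hypothesis $\Delta = \pm D^7$ for some integer $D$. Since $7$ is odd, we may absorb the sign into $D$ and write $\Delta = C^7$ with $C = \pm D$. We then set
\[A = c_6, \qquad B = -c_4,\]
so that $A^2 + B^3 = c_6^2 - c_4^3 = -1728\Delta$. This has the wrong sign, so instead we take $C = -D'$ where $\Delta = D'^7$, i.e. we choose $C$ with $C^7 = -\Delta$. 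Then $A^2 + B^3 = 1728 C^7$ holds. The $j$-invariant becomes $c_4^3/\Delta = (-B)^3/(-C^7) = B^3/C^7$, matching the statement.

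It remains to prove that $A, B, C$ are relatively prime, and this uses semistability. The key fact is that for a minimal Weierstrass equation at a prime $p$, semistable (multiplicative or good) reduction is equivalent to either $p \nmid \Delta$, or $p \mid \Delta$ and $p \nmid c_4$ (e.g. \cite[Chapter VII, Proposition 5.1]{AEC}). So for every prime $p$, at least one of $\Delta$ and $c_4$ is prime to $p$. It follows that $\gcd(B, C) = 1$: a common prime factor would divide both $c_4$ and $\Delta$, which semistability rules out. Finally, any prime dividing both $A$ and $C$ would divide $B^3 = 1728C^7 - A^2$, hence $B$, which is again impossible. The same argument handles a prime dividing $A$ and $B$. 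Pairwise coprimality thus holds, and in particular the triple is relatively prime.

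The main point needing care is that the minimal discriminant is taken with respect to a globally minimal model, so that $c_4, c_6$ are integers and the semistability criterion applies prime by prime. Over $\mathbb{Q}$, with class number one, this is automatic. The sign bookkeeping in choosing $C$ from $\pm\Delta$ is routine because $7$ is odd. One should also double-check the degenerate cases: $C = 0$ cannot occur since $\Delta \neq 0$, and $B = 0$ or $A = 0$ (the cases $j = 0, 1728$) cause no issue for the coprimality argument, since it only uses $C \neq 0$ and the prime-by-prime semistability criterion.
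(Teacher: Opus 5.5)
Your proof is correct and follows the paper's argument. You take a global minimal model, set $A = c_6$, $B = -c_4$, $C^7 = -\Delta$, and use semistability to get $\gcd(c_4,\Delta)=1$, i.e.\ $\gcd(B,C)=1$; this already makes the triple relatively prime.

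One unneeded extra claim is false and should be dropped. The ``same argument'' does not give $\gcd(A,B)=1$: from $p \mid A$ and $p \mid B$ you only get $p \mid 1728C^7$, which holds automatically for $p=2,3$. Semistability does not prevent this either; the semistable curve $y^2+y=x^3-x^2-10x-20$ has $c_4=496$ and $c_6=20008$, both divisible by $8$. So do not assert pairwise coprimality.
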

\begin{proof}
As $\mathbb{Q}$ has class number one, we can find a minimal Weierstrass equation of the form \[y^2 + a_1xy + a_3 y = x^3 + a_2x^2 + a_4x + a_6,\] where the $a_i$ are integers. Now, we can use the standard change of variables, as in \cite[\S III.1]{AEC}: we have expressions $c_4, c_6$ which are integer polynomials in the $a_i$ such that $1728\Delta = c_4^3 - c_6^2.$ By assumption, we know that $\Delta = -C^7$ for some integer $C.$ Furthermore, by the semistability assumption, either $p$ doesn't divide $\Delta,$ or $p$ doesn't divide $c_4$ (from the description of Tate's algorithm, \cite[Chapter IV, \S 9]{advAEC} and the formula for $c_4$ in \cite[\S III.1]{AEC}). Either way, $\Delta, c_4$ are relatively prime. 
\par Hence, setting $A = c_6, B = -c_4,$ we find that we have our relatively prime integers $A, B, C$ satisfying the above equation. Furthermore, our elliptic curve $E$ has $j$-invariant $B^3/C^7$ from the formula for the $j$-invariant (see \cite[\S III.1]{AEC}, for instance), which gives the desired claim.
\end{proof}
Hence, the $j$-invariant of an elliptic curve $E/\mathbb{Q}$ whose minimal discriminant is seventh power is the image of a $\mathbb{Z}$-point under the morphism \[S = \Spec \mathbb{Z}[A, B, C]/(A^2 + B^3 = 1728C^7) - \{A = B = C =0\} \xrightarrow{j = B^3/C^7} \mathbb{P}^1_{\mathbb{Z}},\] where, $\{A = B = C = 0\}$ is just the closed $\mathbb{Z}$-subscheme cut out by these equations. Note that we can view $\mathbb{P}^1_{\mathbb{Z}}$ as an integral model for the modular curve $X(1) \simeq \mathbb{P}^1_{\mathbb{Q}}.$ 
\par On the other hand, we know that, to have a $7$-torsion point over $\mathbb{Q},$ our point lies in the image of a $\mathbb{Q}$-point under the map $X_1(7) \rightarrow X(1)$ (by definition of $X_1(7)$). Hence, if there existed an elliptic curve $E$ is such that its N\'{e}ron model has $\piet$ equal to $\mathbb{Z}/7\mathbb{Z},$ then we have a rational point on $X(1)$ which lies in the image of both maps, and which is not $\infty$ (since the rational point needs to be the $j$-invariant of the elliptic curve).
\par We now draw from the techniques in \cite{pss}, specifically in the construction of the appropriate curves needed to be considered and their local test in \cite[\S 7.4]{pss}. We will start by interpreting our elliptic curve $E$ with a rational $7$-torsion point as a rational point in the image of a map $X_E \rightarrow X(1)$ for some appropriate twist of the map of modular curves $X(7) \rightarrow X(1)$ by certain elements of Galois cohomology, following \cite[\S 4-5]{pss}. Using this, we can use the explicit description of the morphism $X_E \rightarrow X(1)$ to allow us to describe which points in $X(1)$ lies in the image of this morphism, using $p$-adic information for an appropriate prime $p.$ But we will then show that this locus is disjoint from the possible image of $S\rightarrow \mathbb{P}^1,$ given by $j = B^3/C^7.$ 
\subsection{Constructing the Relevant Twists}
\par We start by describing these curves $X_E,$ which as described in \cite[\S 4.4-4.5]{pss} are twists $X(7) \rightarrow X(1).$ Recall that $X(7)$ is a modular curve defined over $\mathbb{Q},$ whose $K$-points parametrize elliptic curves $E/K$ and a $G_K$-equivariant isomorphism $\phi: E[7](\overline{k}) \simeq \mu_7 \times \mathbb{Z}/7$ which preserves the symplectic structure on both sides (described in \cite[\S 4.1]{pss}).
\par By viewing $\sll_2(\mathbb{F}_7)$ as the $\overline{\mathbb{F}_7}$ symplectic automorphisms of $\mu_7 \times \mathbb{Z}/7,$ we have an action of $G_{\mathbb{Q}}$ on $\sll_2(\mathbb{F}_7).$ We then can consider the the cohomology set $H^1(G_{\mathbb{Q}}, \sll_2(\mathbb{F}_7);$ elements of this set are in bijection with twists of the $G_{\mathbb{Q}}$-module $\mu_7 \times \mathbb{Z}/7.$ 
\par These twists then induce twists of $X(7),$ which for an element $\xi \in H^1(\mathbb{Q}, \text{SL}_2(\mathbb{F}_7))$ is the curve $X_{\xi}$ which parametrizes elliptic curves $E/K$ and a $G_K$-equivariant symplectic isomorphism from $E[7](\overline{k})$ to the twist of the module $\mu_7 \times \mathbb{Z}/7$ corresponding to $\xi.$ Like $X(7),$ these are planar quartic curves.
\par These twists of $X(7)$ can also be viewed as elements of $H^1(\mathbb{Q}, \text{PSL}_2(\mathbb{F}_7))$ where the Galois action arises by viewing $\text{PSL}_2(\mathbb{F}_7)$ as the automorphisms of $X(7)_{\overline{\mathbb{Q}}}.$ This Galois action is compatible with that of $\sll_2(\mathbb{F}_7);$ see \cite[\S 4.4]{pss}. Furthermore, given a twist of $\mu_7 \times \mathbb{Z}/7,$ viewed as a cocycle $\xi \in H^1(\mathbb{Q}, \sll_2(\mathbb{F}_7)),$ the corresponding twist of $X(7)$ is the image of $\xi$ under \[H^1(\mathbb{Q}, \text{SL}_2(\mathbb{F}_7))\rightarrow H^1(\mathbb{Q}, \text{PSL}_2(\mathbb{F}_7))\] with the aforementioned Galois actions.
\par In our setting, we want the twists of $\mu_7 \times \mathbb{Z}/7$ which are the $7$-torsion of a given elliptic curve $F;$ we denote the corresponding curve by $X_F.$ Specifically, we are interested in elliptic curves with a rational $7$-torsion point and whose discriminant is a seventh power. By Lemma \ref{lem: unramified 7-torsion}, since $\mu_7 \times \mathbb{Z}/7$ is unramified outside of $7,$ the cohomology classes $\xi$ that we twist by are unramified outside of $7.$
\par We further narrow down the possibilites for our cohomology class. Our elliptic curve $E$ was assumed to have a rational $7$-torsion point; therefore, $E[7](\overline{\mathbb{Q}})$ contains $\mathbb{Z}/7\mathbb{Z}$ as a Galois submodule. Following \cite[\S 5.3]{pss}, this means that the cohomology class in $H^1(\mathbb{Q}, \text{PSL}_2(\mathbb{F}_7))$ we use to twist $X(7)$ into $X_E(7)$ must come from $H^1(\mathbb{Q}, B) \rightarrow H^1(\mathbb{Q}, \text{PSL}_2(\mathbb{F}_7)),$ where $B$ is the image under $\sll_2(\mathbb{F}_7) \rightarrow \text{PSL}_2(\mathbb{F}_7)$ of the group of upper triangular matrices in $\sll_2(\mathbb{F}_7).$ To see this, fix a group isomorphism $\phi: \mu_7 \times \mathbb{Z}/7\mathbb{Z} \rightarrow E[7](\overline{\mathbb{Q}})$ which preserves the symplectic structure and sends $\mu_7$ to $\mathbb{Z}/7\mathbb{Z}.$ Then, the cohomology class twisting $\mu_7 \times \mathbb{Z}/7$ into $E[7]$ is represented by the cocycle $\sigma \mapsto \phi^{-1}\circ \phi^{\sigma}.$ Furthermore, for each $\sigma,$ this sends $\mu_7$ to $\mu_7,$ so is valued in upper-triangular matrices, and thus mapping this through $\sll_2(\mathbb{F}_7) \rightarrow \text{PSL}_2(\mathbb{F}_7)$ yields a $B$-valued cocycle.
\par Now, consider the map $H^1(\mathbb{Q}, B) \rightarrow H^1(\mathbb{Q}, \mathbb{Z}/3\mathbb{Z}),$ viewing $\mathbb{Z}/3$ as the quotient of $B$ by the unipotent upper triangular matrices (whose induced Galois action is trivial). We can also view this as being the restriction of our cocycle to $\mu_7.$ Since we know that $\phi$ sends $\mu_7$ to $\mathbb{Z}/7,$ our cocycle gets sent to the cocycle in $H^1(\mathbb{Q}, \mathbb{Z}/3\mathbb{Z}) \simeq \text{Hom}(G_{\mathbb{Q}}, \mathbb{Z}/3\mathbb{Z})$ given by the homomorphism $f$ sending $\sigma$ to the element \[a_{\sigma} \in \mathbb{F}_7^{\times}/\{\pm 1\} \simeq \text{Aut}(\mu_7)/\{\pm 1\},\] such that \[\zeta_7^{\pm a_{\sigma}} = \sigma^{-1}(\zeta_7)^{\pm 1}.\] This is because if the isomorphism $\phi$ sends $\zeta_7$ to $1,$ $\phi^{\sigma}$ sends $\sigma(\zeta_7)$ to $1.$ 
\par Note that this cocycle is trivial on the subgroup fixing $K = \mathbb{Q}(\zeta_7)^+.$ Furthermore, under the notation of \cite[\S 5.3]{pss}, we see that the automorphism $\tau$ of $K$ which sends $\zeta_7 + \zeta_7^{-1}$ to $\zeta_7^4 + \zeta_7^3$ gets identified with $h = \begin{pmatrix} 2 & 0 \\ 0 & 4 \end{pmatrix},$ since our cocycle sends $\tau$ to $2 \in \mathbb{F}_7^{\times}.$ In particular, this is the desired $\tau$ automorphism per the notation of the paper \cite{pss}.
\par We now compute the relevant cohomology classes. Using the computations in \cite[\S 5.5]{pss} and the ramification conditions, these classes correspond to elements of $(\mathcal{O}_K[1/7]^{\times}/(\mathcal{O}_K[1/7]^{\times})^{7})^{\tau = 2}.$ Note that $7$ is totally ramified in $K.$ Therefore, we see that by Dirichlet's $S$-unit theorem that $\mathcal{O}_K[1/7]^{\times}$ is the product of $\mathbb{Z}/2$ (coming from $\pm 1$) and a free group of rank $3.$ Furthermore, by considering the valuations of the elements, and noting that $\tau$ preserves the valuation above $7,$ we find that this has to come from $\mathcal{O}_K^{\times},$ which is the product of a free abelian group of rank $2$ and $\mathbb{Z}/2$ (coming from $\pm 1$).  
\par We now compute the $\tau = 2$ subspace of $\mathcal{O}_K^{\times}/(\mathcal{O}_K^\times)^7.$ We first find a basis for $\mathcal{O}_K^{\times};$ there is a basis given in \cite[\href{https://www.lmfdb.org/NumberField/3.3.49.1}{Number Field 3.3.49.1}]{lmfdb}, but we use a slightly different basis for notational ease. If $u = \zeta_7 + \zeta_7^{-1},$ then it is a root of the polynomial $t^3 + t^2 - 2t - 1,$ then our choice of basis is given by $u, u+1;$ their basis is $u^2 - 1, u^2 - 2.$ We note that these are related by $u^2 - 1 = -u(u^2- 2),$ and $u^2 - 2 = -(u+1)^{-1},$ so our choice is also a basis.
\par Then, we know that $\tau(u) = -u^2 - u + 1 = -u^{-1}(u+1)$ and $\tau(u+1) = -u^{-1}.$ Therefore, the $\tau = 2$ space inside of $\mathcal{O}_K^{\times}/(\mathcal{O}_K^\times)^7$ is spanned by the image of $u(u+1)^4,$ which we can then use to construct the appropriate curves as in \cite[\S 5.4]{pss}. Evaluating the formulas found there (using Sage) gives us the explicit quartics cutting out these curves in $\mathbb{P}^2.$ Note that if our desired elliptic curve $E$ were to exist, then $X_E$ would be one of these twists, and $E$ would correspond to some rational point on one of these whose image also is the image of an integral point along $S \rightarrow \mathbb{P}^1.$  
\subsection{The Local Test}
Now that we have the appropriate twists, given as planar quartic curves, we apply the idea behind the local test of \cite[\S 7.4]{pss}. In this case, suppose that $f$ is the quartic cutting out the curve $X_E$ inside of $\mathbb{P}^2.$ Then, by \cite[Lemma 7.7]{pss}, the map $X_E \rightarrow \mathbb{P}^1 \simeq X(1)$ is given by sending $[x:y:z]$ to $\Psi_{14}(f)^3/\Psi_0(f)\Psi_6(f)^7.$ Here, $\Psi_0, \Psi_6, \Psi_{14}$ are specific homogeneous polynomials depending on the quartic equation $f$ (which are explicitly given in \cite[\S 7.1]{pss}). 
\par We can now evaluate these polynomials, just as in \cite[\S 7.4]{pss}, on residue disks inside of $\mathbb{P}^2(\mathbb{Q}_p)$ for an appropriate choice of $p.$ For sufficiently small residue disks, the polynomials $\Psi_0, \Psi_6, \Psi_{14}$ will have constant $p$-adic valuation (by taking an appropriate expansion, assuming the polynomials do not attain zeroes), and so the $j$-invariant will have constant $p$-adic valuation on these small disks. The only time that these polynomials do not have constant valuation is when one of them is zero, which corresponds to $j = 0$ or $j = \infty,$ and so our local test will capture all $j$ invariants besides possibly these two (which could only happen if those actually are images of rational points on $X_E$).
\par On the other hand, from Proposition \ref{prop: seventh power discriminant equation}, we know that the $j$-invariant of the image of any point $S(\mathbb{Z}_p) \rightarrow \mathbb{P}(1)$ will have either a nonpositive $p$-adic valuation divisible by $7$ (if $C$ is divisible by $p$), or a nonnegative $p$-adic valuation divisible by $3$ (if $B$ is divisible by $p$).
\par An explicit computation using Sage gives us the $p$-adic valuation of the $j$-invariant under the map $X_E \rightarrow X(1)$ for all but one of the allowable curves $X_E,$ and for those six curves the $p$-adic valuation of the $j$-invariants of $\mathbb{Q}_p$-points under $X_E \rightarrow X(1)$ do not have the necessary valuation when $p = 7.$ In particular, $j = 0, \infty$ do not show up for six of the curves.
\par For the last curve, we claim this is actually isomorphic to $X(7)$ as a curve over $\mathbb{Q}$ (although this is witnessed by an automorphism which is nontrivial). To check this, we claim that the following gives us the desired isomorphism from $X(7)$ to our curve $X':$ \[x \mapsto -2x/7 + 3y/7 + 6z/7, y \mapsto 2x/7 - 3y/7 + z/7
, z \mapsto 3x/7 - y/7 - 2z/7.\]
In our code, we have verification that this isomorphism over $\mathbb{Q}$ actually gives us an isomorphism between the curve we obtain and $X(7).$ 
\par However, we know all of the rational points of $X(7)$: these are the permutations of $[1:0:0]$ (see \cite[\S 3.1]{elkies}, for instance). Therefore, we know all of the rational points of this final curve. Plugging these three points into this final twist, which we explicitly do in our code, shows that $\Psi_6$ is zero, meaning that all three points correspond to $j$-invariant $\infty.$ In particular, none of them correspond to actual elliptic curves.
\par This means that none of these quartics can contribute a rational point on $X(1)$ which is the image of an integral point of $S$ where $C \neq 0,$ a contradiction to the existence of the curve $E$ we stipulated before. Hence, this curve $E$ cannot exist, so $\mathbb{Z}/7\mathbb{Z}$ is not possible as the \'{e}tale fundamental group of the N\'{e}ron model of an elliptic curve over $\mathbb{Q}$ with semistable reduction everywhere, which is what we wanted.
\subsection{Examples for $N = 1, 2, 3, 5$}
All that is left to show is that each of the four remaining values of $N$ are attained. By Lemma \ref{lem: component grows under finite etale} and Proposition \ref{prop: sufficiency for finite etale}, to give examples of isogenies $E' \rightarrow E$ whose induced morphism of N\'{e}ron models is finite \'{e}tale, it suffices to show that the Faltings height satisfies the appropriate difference, and that the discriminant of $E'$ is an appropriate power of that of $E.$
\par With this proposition, we now can assert that the following morphisms are examples for $\mathbb{Z}/2, \mathbb{Z}/3, \mathbb{Z}/5,$ proving that these groups exist as \'{e}tale fundamental groups. We give these as LMFDB labels, and the fact that the properties in the statement of Proposition \ref{prop: sufficiency for finite etale} are satisfied follows from checking the database in \cite{lmfdb} (and the equations for the curves arise from there as well).
\begin{enumerate}
    \item[$\mathbb{Z}/2$]: 17.a2 as a cover of 17.a1 (the latter having equation $y^2 + xy + y = x^3 - x^2 - 91x - 310$)
    \item[$\mathbb{Z}/3$]: 19.a2 as a cover of 19.a1 (the latter having equation $y^2 + y = x^3 + x^2 - 769x - 8470$)
    \item[$\mathbb{Z}/5$]: 11.a2 as a cover of 11.a1 (the latter having equation $y^2 + y = x^3 - x^2 - 7820x - 263580$)
\end{enumerate}
Note that for examples of trivial \'{e}tale fundamental group, each of these three covers are examples, since by what we've shown already, we cannot have any finite \'{e}tale covers of N\'{e}ron models which are degree not in the list $\{1, 2, 3, 5\}$ (and a finite \'{e}tale cover of the N\'{e}ron model of one of 17.a2, 19.a2, 11.a2 would produce a finite \'{e}tale cover of the N\'{e}ron model of its corresponding downstairs curve whose degree is outside this list). 
\par This finishes the existence part, and hence the entire proof, of the semistable reduction everywhere case for Theorem \ref{thm: classification for ECs over Q}.
\subsection{The Additive Reduction Case}
We now turn to the case where there exists a place of additive reduction. We know from Proposition \ref{prop: strong bounds for additive reduction case} that $\piet(\mathcal{E})$ has size at most $4.$ Thus, to prove Theorem \ref{thm: classification for ECs over Q}, we first show that $|\piet(\mathcal{E})| \neq 4,$ before proving that all the other cases are attained, which again we do with examples from \cite{lmfdb}. 
\par Suppose for the sake of contradiction that there existed an elliptic curve $E/\mathbb{Q}$ with a prime of additive reduction and whose N\'{e}ron model $\mathcal{E}/\mathbb{Z}$ is such that $|\piet(\mathcal{E})| = 4.$ Let $p$ be a prime of additive reduction for $E.$ Then, we have some isogeny $E' \rightarrow E$ of elliptic curves over $\mathbb{Q}$ of degree $4$ whose corresponding morphism of N\'{e}ron models is finite \'{e}tale (again, see Lemma \ref{lem: generic fiber finite etale covers are abelian} and Proposition \ref{prop: finite etale covers are neron models}). 
\par Furthermore, using the Kodaira-N\'{e}ron classification and Tate's algorithm (see \cite[Chapter IV, \S 9]{advAEC}), we see that the $p$-adic valuation of $\Delta_{E/\mathbb{Q}}$ is either equal to $f + 8$ or $f,$ where $f$ is the $p$-adic valuation of the conductor of $E,$ and the $p$-adic valuation of $\Delta_{E'/\mathbb{Q}}$ is $f + n + 4$ for some $n \geq 0.$ Recall that the conductor is an isogeny invariant (for instance, see \cite[Definition on p. 380]{advAEC}, noting that for sufficiently large $l$ that the Galois actions on $V_l(A), E[l]$ are invariant under a given isogeny).
\par Furthermore, we see that $n$ must be odd. Otherwise, the classification tells us that the component group over $p$ (after base changing to $\overline{\mathbb{F}_p}$) is $\mathbb{Z}/2 \times \mathbb{Z}/2.$ Therefore, from the proof of Proposition \ref{prop: strong bounds for additive reduction case}, the pre-image of the identity point of $\mathcal{E}_{\overline{\mathbb{F}_p}}$ under the map $\mathcal{E}' \rightarrow \mathcal{E}$ is a group variety whose $\overline{\mathbb{F}_p}$ points form the group $\mathbb{Z}/2 \times \mathbb{Z}/2.$ This also means the kernel of $E' \rightarrow E$ as a group is $\mathbb{Z}/2 \times \mathbb{Z}/2,$ and so must be $E'[2],$ which contradicts the finite \'{e}tale assumption (see Lemma \ref{lem: multiplication by N not etale}). 
\par In fact, we can be a little more precise about the reduction type of $E$ at $p.$
\begin{lemma}\label{lem: hypothetical degree 4 additive reduction specification}
In the above setting, the $p$-adic valuation of $\Delta_{E/\mathbb{Q}}$ must be $f + 8.$
\end{lemma}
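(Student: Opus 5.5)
The plan is to pin down the Kodaira types of $E$ and $E'$ at $p$ from the component count. Then I rule out the alternative $v_p(\Delta_{E/\mathbb{Q}}) = f$ (Kodaira type II) by comparing potential reduction types, which are isogeny invariant.

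\emph{Step 1: the component groups.} By the proof of Proposition \ref{prop: strong bounds for additive reduction case} and Lemma \ref{lem: additive reduction iso or crushes to point}, the map $\mathcal{E}'_{\overline{\mathbb{F}_p}} \rightarrow \mathcal{E}_{\overline{\mathbb{F}_p}}$ restricts to an isomorphism on each component. Every geometric fiber consists of $4$ points on distinct components. Hence the number of geometric components of $\mathcal{E}'$ at $p$ is $4$ times that of $\mathcal{E}$. Since additive component groups have order at most $4$, $\mathcal{E}_{\overline{\mathbb{F}_p}}$ is connected and $\mathcal{E}'_{\overline{\mathbb{F}_p}}$ has exactly $4$ components.

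In the Kodaira--N\'eron table, trivial component group with additive reduction occurs only for types II and II$^*$. Ogg's formula $v_p(\Delta) = f + m - 1$, with $m$ the number of components of the minimal regular model, gives $v_p(\Delta_{E/\mathbb{Q}}) = f$ for type II and $f+8$ for type II$^*$. This recovers the dichotomy used above. Likewise, order-$4$ component group forces $E'$ to have type I$_n^*$ with $v_p(\Delta_{E'/\mathbb{Q}}) = f+4+n$, and we already showed $n$ is odd, so $n \geq 1$.

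\emph{Step 2: rule out type II.} Type II has potentially good reduction, i.e. $v_p(j_E) \geq 0$. Type I$_n^*$ with $n \geq 1$ has $v_p(j_{E'}) = -n < 0$, i.e. potentially multiplicative reduction. Potentially good reduction is isogeny invariant: by N\'eron--Ogg--Shafarevich it is equivalent to the inertia at $p$ acting through a finite quotient on $V_\ell$ for $\ell \neq p$, and an isogeny identifies $V_\ell(E') \simeq V_\ell(E)$ as $G_{\mathbb{Q}_p}$-modules. So $E$ and $E'$ have the same potential reduction type, which excludes type II for $E$ and leaves $v_p(\Delta_{E/\mathbb{Q}}) = f+8$.

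\emph{Main obstacle.} The delicate point is Step 1's type identification, in particular at $p = 2, 3$, where Tate's algorithm is more intricate. I would lean on Ogg's formula, which holds for all $p$, rather than on explicit valuations of $c_4$ and $c_6$. I would double-check the correspondence between component group order and Kodaira type, namely that order $1$ means II or II$^*$ and order $4$ means I$_n^*$, since the argument depends on it.

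The same invariance argument seems also to contradict type II$^*$, which has $v_p(j) \geq 0$. If that holds, it shows the degree-$4$ case is impossible outright, which is stronger than the lemma and consistent with the paper's goal. For the lemma as stated, only the exclusion of type II is needed.
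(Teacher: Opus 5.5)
Step 1 is fine. It recovers the dichotomy $v_p(\Delta_{E/\mathbb{Q}})\in\{f,f+8\}$ and the type $I_n^*$ for $E'$, and Ogg's formula holds at every $p$. The gap is in Step 2, in the claim that type $I_n^*$ with $n\geq 1$ forces $v_p(j_{E'})=-n<0$. This holds for odd $p$, where $I_n^*$ with $n\ge 1$ is a ramified quadratic twist of $I_n$. It is false for $p=2$.

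For example, take $y^2=x^3+4x$ at $2$. Running Tate's algorithm, the cubic $T^3+T$ has a double root mod $2$, and after the shifts $x\mapsto x+2$ and $y\mapsto y+4$ the algorithm stops at type $I_3^*$. So $n$ is odd, yet $j=1728$ and the reduction is potentially good. Likewise $y^2=x^3+x$ (type II at $2$) and $y^2=x^3-4x$ (type $I_2^*$ at $2$) are $2$-isogenous. So at $p=2$ the potential reduction type cannot separate II from $I_n^*$, and your argument gives no contradiction.

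This is exactly the case that matters: right after this lemma the paper shows that an additive place in this setting must be $2$ or $3$. Your closing remark is a symptom of the problem. The same reasoning does rule out II$^*$ at odd $p$, but if it worked at every $p$, the paper's later height estimate would be unnecessary. The argument breaks at $p=2$.

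The missing idea is geometric and works uniformly in $p$. Suppose $E$ had type II. Then $\mathcal{C}\setminus\mathcal{E}_{\mathbb{Z}_p}$ is the single cusp of the minimal regular model $\mathcal{C}$, a point of codimension $2$ in a regular scheme. By Zariski--Nagata purity, the cover $\mathcal{E}'_{\mathbb{Z}_p}\to\mathcal{E}_{\mathbb{Z}_p}$ extends to a finite \'etale cover $\mathcal{C}'\to\mathcal{C}$. Hence $\mathcal{C}'$ is a proper regular model of $E'$ whose geometric special fiber has only $4$ components. But every proper regular model maps to the minimal regular model by contracting curves, and the minimal regular model of a type $I_n^*$ curve has $n+5\geq 6$ components, a contradiction. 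You can keep your $j$-invariant argument for odd $p$, but you need this argument, or some other one, at $p=2$.
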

\begin{proof}
Suppose it was $f$ instead. Let $\mathcal{C}$ be the minimal regular model for $E$ over $\mathbb{Z}_p,$ and let $\mathcal{E}_{\mathbb{Z}_p}, \mathcal{E}'_{\mathbb{Z}_p}$ be the base changes of the N\'{e}ron models to $\mathbb{Z}_p.$ Then, $\mathcal{C} - \mathcal{E}_{\mathbb{Z}_p}$ would consist of a single point. Then, \cite[\href{https://stacks.math.columbia.edu/tag/0BMA}{Tag 0BMA}]{stacks-project} and  \cite[\href{https://stacks.math.columbia.edu/tag/0EY7}{Tag 0EY7}]{stacks-project} tell us that $\mathcal{E}_{\mathbb{Z}_p}' \rightarrow \mathcal{E}_{\mathbb{Z}_p}$ is the restriction of some finite \'{e}tale cover $\mathcal{C}' \rightarrow \mathcal{C}.$ Note that since $\mathcal{C} - \mathcal{E}_{\mathbb{Z}_p}$ only consists of closed points, the same holds for $\mathcal{C}' - \mathcal{E}'_{\mathbb{Z}_p};$ in particular, the special fiber of $\mathcal{C}'$ only has four geometric components. 
\par This means that $\mathcal{C}'$ is a proper regular model, using \cite[\href{https://stacks.math.columbia.edu/tag/036D}{Tag 036D}]{stacks-project}. But then if $\mathcal{X}'$ is the minimal regular model for $\mathcal{E}',$ we have a morphism $\mathcal{C}' \rightarrow \mathcal{X}'$ given by contracting curves, per \cite[\href{https://stacks.math.columbia.edu/tag/0C9Z}{Tag 0C9Z}]{stacks-project}. But this is evidently impossible since $\mathcal{C}'$ has fewer components in the geometric special fiber than $\mathcal{X}'.$
\end{proof}
In other words, $E$ must have reduction type $II^*$ and $E'$ must have reduction type $I_n^*$ for $n$ odd.
\par Now, we claim that if $p$ is a place of additive reduction, then $p = 2, 3.$ To see this, suppose that $p \geq 5.$ Then, we know from \cite[Chapter IV, \S 9]{advAEC} that the $j$-invariant of $E$ is divisible by $p,$ but the $j$-invariant of $E'$ has negative $p$-adic valuation. Since $E, E'$ are isogenous, this cannot be the case; for instance, $E$ has potential good reduction but $E'$ has potential semistable reduction (e.g. see \cite[Chapter VII, Proposition 5.5]{AEC}), which cannot be the case as $E, E'$ are isogeneous. 
\par Now, we apply the same argument as in Proposition \ref{prop: finite etale covers are prime}, using Silverman's formula (\cite[Proposition 1.1]{silvermanheights}) and the estimate for $\Delta(\tau).$ We know that the kernel of $E' \rightarrow E$ consists of rational points, so we can factor this into two degree $2$ isogenies $E' \rightarrow E'' \rightarrow E$ of elliptic curves over $\mathbb{Q}.$ Note that the $p$-adic valuation of $E''$ at every prime $p$ of additive reduction must then be $f + 7,$ using the same proof as Lemma \ref{lem: hypothetical degree 4 additive reduction specification}, (replacing $E$ with $E'$).
\par Let $N$ be the product of the primes over which $E$ (and thus $E', E''$) have additive reduction; note that $1 \leq N \leq 6.$ Let $\Delta_{E/\mathbb{Q}, ss}$ be the product of the prime powers in $\Delta_{E/\mathbb{Q}}$'s factorization over all the other primes. Then, Lemma \ref{lem: component grows under finite etale} tells us that $\Delta_{E'/\mathbb{Q}, ss} = \Delta_{E''/\mathbb{Q}, ss}^2 = \Delta_{E/\mathbb{Q}, ss}^4.$ Finally, let $\tau$ be the element in the standard fundamental domain of $\mathbb{H}$ such that $E_{\mathbb{C}} \simeq \mathbb{C}/(\mathbb{Z} + \mathbb{Z} \tau),$ and let $\tau', \tau''$ be the corresponding elements for $E', E''.$ 
\par We apply Silverman's formula to find that 
\[-\frac{1}{2}\log 2 = \frac{1}{12} \log |\Delta_{E/\mathbb{Q}, ss}| - \frac{1}{12} \log N + \frac{c_{\tau} - c_{\tau''}}{12} - \frac{\pi}{6} \text{Im} \tau + \frac{\pi}{6} \text{Im} \tau'' + \frac{1}{2}\log |\frac{\text{Im} \tau}{\text{Im} \tau''}|,\] where $c_{\tau}, c_{\tau''}$ are unknown constants that have absolute value at most $1/9.$ But with $N \leq 6$ and $\text{Im} \tau \geq \frac{\sqrt{3}}{2},$ we see that $\text{Im} \tau'' = \frac{1}{2} \text{Im} \tau.$ Therefore, we see that \[|\frac{\pi}{2} \text{Im} \tau - \frac{1}{2} \log |\Delta_{E/\mathbb{Q}, ss}| + \frac{1}{2} \log N - 6 \log 2| \leq 1/9.\] Similarly, we find that \[|\frac{\pi}{4} \text{Im} \tau - \log |\Delta_{E/\mathbb{Q}, ss}| + \frac{3 - n}{2} \log N - 6 \log 2| \leq \frac{1}{9}.\] 
\par First, suppose that $n \geq 3.$ Then, taking the difference of the expressions in the absolute value, we find that \[|\frac{\pi}{4} \text{Im} \tau +  \frac{1}{2}\log |\Delta_{E/\mathbb{Q}, ss}| + \frac{n-2}{2} \log N| \leq \frac{2}{9}.\] Hence, we would need $\frac{1}{2} \log |\Delta_{E/\mathbb{Q}, ss}| \leq 2/9$ and $\frac{1}{2} \log N \leq 2/9;$ which would imply that $E$ has good reduction everywhere, which is not possible (e.g. see \cite[Exercise 8.15]{AEC}).
\par Otherwise, $n = 1.$ Then, we know that \[|\frac{\pi}{4} \text{Im} \tau + \frac{1}{2} \log |\Delta_{E/\mathbb{Q}, ss}| - \frac{1}{2} \log N | \leq 2/9,\] and so therefore \[\frac{3}{2} \log |\Delta_{E/\mathbb{Q}, ss}| - \frac{3}{2} \log N \leq -6 \log 2 + \frac{1}{3}.\] With $N \leq 6,$ we therefore have that \[\log |\Delta_{E/\mathbb{Q}, ss}| \leq \log 3/8 + \frac{2}{9}.\] or $|\Delta_{E/\mathbb{Q}, ss}| < 1,$ so this case can't happen.
\par Therefore, there is no pair of isogeneous elliptic curves with a degree $4$ isogeny between them with the required Faltings height difference, where the reduction types of the curves at each place of additive reduction are the required types, as mentioned above. This proves that $4$ is not attained as the size of the \'{e}tale fundamental group of a N\'{e}ron model.
\par To show that $1, 2, 3$ are attained, we provide the following examples from \cite{lmfdb}.
\begin{enumerate}
    \item[$\mathbb{Z}/2$]: 24.a3 as a cover of 24.a1 (the latter having equation $y^2 = x^3 - x^2 - 384x - 2772$)
    \item[$\mathbb{Z}/3$]: 27.a3 as a cover of 27.a1 (the latter having equation $y^2 + y = x^3 - 270x -1708$)
\end{enumerate}
These can be checked by checking the appropriate difference in Faltings heights and the number of components in the N\'{e}ron model over each place of bad reduction, using Proposition \ref{prop: sufficiency for finite etale}. For trivial \'{e}tale fundamental group, each of these covers are examples (since a further cover would imply the base has \'{e}tale fundamental group larger than $3,$ which we already know cannot happen). This finishes the proof of Theorem \ref{thm: classification for ECs over Q}.
\printbibliography
\end{document}